\tikzset{middlearrow/.style n args={5}{
		decoration={
			markings,
			mark=at position #1 with {\arrow[rotate=#2]{#3},\node[transform shape,#5] {#4};}},postaction={decorate}},
	middlearrow/.default={.5}{0}{>}{}{below}	}
\theoremstyle{plain}
\newtheorem{thm}{Theorem}[section]
\newtheorem{prop}[thm]{Proposition}
\newtheorem{cor}[thm]{Corollary}
\newtheorem{lem}[thm]{Lemma}
\newtheorem{thmx}{Theorem}
\newtheorem{propx}[thmx]{Proposition}
\theoremstyle{definition}
\newtheorem{defn}[thm]{Definition}
\newtheorem{ex}[thm]{Example}
\newtheorem{rem}[thm]{Remark}
\newcommand{\Z}{\mathbb{Z}}
\newcommand{\Q}{\mathbb{Q}}
\newcommand{\Ss}{\mathbb{S}}
\DeclareMathOperator{\Aut}{Aut}
\DeclareMathOperator{\Out}{Out}
\DeclareMathOperator{\Inn}{Inn}
\DeclareMathOperator{\Fix}{Fix}
\DeclareMathOperator{\SL}{SL}
\DeclareMathOperator{\GL}{GL}
\DeclareMathOperator{\Sp}{Sp}
\DeclareMathOperator{\Hom}{Hom}
\DeclareMathOperator{\MCG}{MCG}
\DeclareMathOperator{\CV}{CV}
\DeclareMathOperator{\Id}{Id}
\DeclareMathOperator{\con}{con}
\DeclareMathOperator{\ch}{ch}
\DeclareMathOperator{\Cl}{Cl}
\DeclareMathOperator{\Top}{Top}
\DeclareMathOperator{\SP}{Sp}
\DeclareMathOperator{\Mod}{Mod}
\DeclareMathOperator{\Alg}{Alg}
\DeclareMathOperator{\Res}{Res}
\DeclareMathOperator{\Map}{Map}
\DeclareMathOperator{\ev}{ev}
\DeclareMathOperator{\odd}{odd}
\DeclareMathOperator{\pri}{prime}
\title{On the Farrell--Tate $K$-theory of $\Out(F_n)$}
\author{Naomi Andrew}
\address[Naomi Andrew]{Mathematical Institute, Andrew Wiles Building, Observatory Quarter, University of Oxford, Oxford OX2 6GG, United Kingdom}
\email{Naomi.Andrew@maths.ox.ac.uk}
\author{Irakli Patchkoria}
\address[Irakli Patchkoria]{Department of Mathematics, University of Aberdeen, Fraser Noble Building, Aberdeen AB24 3UE, United Kingdom}
\email{irakli.patchkoria@abdn.ac.uk}
\date{}
\begin{document}

\begin{abstract} Using L\"uck's Chern character isomorphism we obtain a general formula in terms of centralisers for the $p$-adic Farrell--Tate $K$-theory of any discrete group $G$ with a finite classifying space for proper actions. We apply this formula to $\Out(F_n)$. The case $n=p+1$ turns out to be especially interesting for the following reason: Up to conjugacy there is exactly one order $p$ element in $\Out(F_{p+1})$ which does not lift to an order $p$ element in $\Aut(F_{p+1})$. We compute the rational cohomology of the centraliser of this element and as a consequence obtain a full calculation of the $p$-adic Farrell--Tate $K$-theory of $\Out(F_{p+1})$ for any prime $p \geq 5$. Our arguments provide an infinite family of $\Q_p$ summands in $K^1(B\Out(F_n)) \otimes_\Z \Q$, with no need for computer calculations: the first such summand is in $K^1(B\Out(F_{12})) \otimes_{\Z} \Q$. 

\end{abstract}

\maketitle

\section{Introduction}

The rational homology of $\Out(F_n)$ has been extensively studied in the last thirty years. The groups $H_i(\Out(F_n); \Q)$ have been computed for $n \leq 7$ and $i \in \Z$ by Vogtmann \cite{Vogtmann06}, Gerlits \cite{Gerlits}, Ohashi \cite{Ohashi}, and Bartholdi \cite{Bartholdi}. The main tool for these calculations is the cellular structure of the spine of the Culler--Vogtmann Outer space \cite{CullerVogtmann1986CVn} and its filtration due to Kontsevich \cite{Kon1, Kon2, ConantVogtmann}. 

Culler and Vogtmann \cite{CullerVogtmann1986CVn} showed that the virtual cohomological dimension of $\Out(F_n)$ is equal to $2n-3$. Hatcher and Vogtmann proved that $H_i(\Out(F_n); \Q)$ stabilises when $n$ is large enough \cite{HatcherVogtmann04} and Galatius showed that the stable value vanishes \cite{Galatius}. Further computations have been done towards understanding the classical rational and orbifold Euler characteristics of $\Out(F_n)$. The results of \cite{MTS, BorinskyVogtmann, BorinskyVogtmann1} imply that in general the rational homology $H_*(\Out(F_n); \Q)$ contains many odd dimensional classes (in fact more than even dimensional classes). However, writing down these odd classes explicitly seems to be very difficult. For $n \leq 6$ the rational homology $H_*(\Out(F_n), \Q)$ is even (see e.g. \cite{Ohashi}). A non-trivial odd dimensional group is computed in \cite{Bartholdi}, where Bartholdi shows that $H_{11}(\Out(F_7); \Q)$ is isomorphic to $\Q$. Bartholdi's result uses Kontsevich's filtration and computer calculations. No other odd dimensional non-trivial rational homology groups of $\Out(F_n)$ have been detected so far. 

In this paper we give a systematic way of producing odd dimensional classes in the rationalised complex topological $K$-theory of $B\Out(F_n)$. We show that for any prime $p \geq 11$, the $\Q$-vector space $K^1(B\Out(F_{p+1})) \otimes_{\Z} \Q$ is non-trivial by providing a quadratic lower bound $\frac{1}{24}(p-7)(p-5)$ for the $\Q_p$-dimension of its $p$-adic summand. We do not need any computer calculations to verify this. The main tools for our computations are the equivariant Chern character of L\"uck \cite{Luck2007} and the generalised Farrell--Tate cohomology defined by Klein in \cite{Klein2001, Klein2002}. Before we explicitly formulate our results, we review general results on rational $K$-theory and Farrell--Tate cohomology. 

L\"uck \cite{Luck2007} provided a conceptual way to compute the rational $K$-theory of $BG$, where $G$ is an infinite discrete group which admits a finite classifying $G$-space for proper actions. This classifying $G$-space is denoted by $\underline{E}G$. It is uniquely determined up to $G$-equivariant homotopy equivalence by the properties that it is a $G$-CW complex and the fixed subspace $\underline{E}G^H$ is contractible if $H$ is finite and empty otherwise. We say that $G$ \emph{admits a finite $\underline{E}G$} if there is a finite $G$-CW complex model for $\underline{E}G$. Equivalently, we can ask that the quotient $\underline{E}G/G$ is compact. For more details on $\underline{E}G$ and examples see \cite{LuckSurvey}. The following theorem of L\"uck is the main calculational tool in this paper \cite[Theorem 0.1]{Luck2007}: 

\begin{thm}[L\"uck] \label{thm:Wolfgang} Let $G$ be a discrete group with a finite $\underline{E}G$. Then for any $m\in \Z$, the equivariant Chern character map
\[\ch^m \colon K^m(BG) \otimes_{\Z} \Q \xrightarrow{\cong} \prod_{i \in \Z} H^{2i+m}(G; \Q) \times \prod_{p \; \pri } \prod_{[g] \in \con_{p}(G)} \prod_{i \in \Z} H^{2i+m}(C   \langle g \rangle; \Q_p) \]
is an isomorphism, where $\con_p(G)$ denotes the set of conjugacy classes of the non-trivial $p$ power order elements, and $C  \langle g \rangle$ denotes the centraliser of the cyclic subgroup generated by $g$. 
\end{thm}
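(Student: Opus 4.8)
The plan is to realise $K^{*}(BG)$ as the value on the finite proper $G$-CW complex $\underline{E}G$ of a suitable proper $G$-equivariant cohomology theory, and then to apply Lück's general equivariant Chern character for proper equivariant (co)homology theories. First, since the trivial subgroup is finite, $\underline{E}G=\underline{E}G^{\{e\}}$ is contractible, so $EG\times\underline{E}G$ is a free $G$-CW complex that is $G$-homotopy equivalent to $EG$; hence $EG\times_{G}\underline{E}G\simeq BG$. Consequently the assignment $X\mapsto K^{*}(E\Gamma\times_{\Gamma}X)$ (Borel-equivariant complex $K$-theory), defined for every group $\Gamma$, is a multiplicative proper equivariant cohomology theory whose value on $\underline{E}G$ is $K^{*}(BG)$ and whose coefficients at a finite subgroup $H\leq G$ are $K^{*}(BH)$. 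One should note that one cannot shortcut through genuine equivariant $K$-theory $K^{*}_{G}(\underline{E}G)$: already for finite $G$ this differs rationally from $K^{*}(BG)$ by the Atiyah--Segal completion, and passing through the Borel construction is exactly what repairs this.

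Next I would feed this theory into Lück's equivariant Chern character at $X=\underline{E}G$. Since $\underline{E}G$ is finite, all of the resulting sums and products are finite, so there are no convergence subtleties. The output is a natural $\Q$-isomorphism expressing $K^{n}(BG)\otimes_{\Z}\Q$ as a product, over $i\in\Z$ and over conjugacy classes of finite cyclic subgroups $C\leq G$, of terms of the form $H^{2i+n}\bigl(BC_{G}(C);\Q\bigr)\otimes_{\Q[W_{G}(C)]}\Theta_{C}$; here $C_{G}(C)$ is the centraliser of $C$ and $W_{G}(C)=N_{G}(C)/C_{G}(C)$, I have used contractibility of the fixed sets to replace the $C$-fixed orbit space of $\underline{E}G$ by $BC_{G}(C)$ up to rational cohomology, and $\Theta_{C}$ is the ``Artin defect'' summand of $K^{*}(BC)\otimes_{\Z}\Q$, i.e.\ the direct complement of the subspace induced up from proper subgroups of $C$. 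Only cyclic subgroups enter, by Artin's rational induction theorem (equivalently, the idempotent decomposition of the rational Burnside ring).

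The arithmetic heart is the identification of the $\Theta_{C}$. For finite cyclic $C$ the Atiyah--Segal completion theorem gives $K^{1}(BC)=0$ and $K^{0}(BC)=R(C)^{\wedge}_{I}$; since $R(C)/I=\Z$ with finite filtration quotients, this completion splits as $\Z\oplus\bigoplus_{p\mid|C|}(\hat{I})_{p}$ with each $(\hat{I})_{p}$ a finitely generated $\Z_{p}$-module of rank $|C_{p}|-1$, $C_{p}$ being the Sylow $p$-subgroup of $C$. Rationally, the $\Z$-summand is induced from the trivial subgroup, and each $(\hat{I})_{p}\otimes_{\Z}\Q=\Q_{p}^{\,|C_{p}|-1}$ is induced from $C_{p}$, because $\mathrm{res}^{C}_{C_{p}}\circ\mathrm{ind}^{C}_{C_{p}}$ is multiplication by the $p$-prime index $[C:C_{p}]$, a $\Z_{p}$-unit. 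Hence $\Theta_{C}=0$ unless $C$ is a nontrivial cyclic $p$-group; and for $C=\Z/p^{k}$ the proper subgroups $\Z/p^{j}$, $j<k$, contribute only the part attached to elements of order $\leq p^{k-1}$, so $\Theta_{C}$ is a $\Q_{p}$-vector space of dimension $\varphi(p^{k})$ carrying a basis that $W_{G}(C)$ permutes like the generators of $C$. Substituting, and using that $W_{G}(C)$ embeds in $\Aut(C)$ and therefore acts \emph{freely} on the generators, the term $H^{*}(BC_{G}(C);\Q)\otimes_{\Q[W_{G}(C)]}\Theta_{C}$ becomes a product, over $W_{G}(C)$-orbits of generators, of copies of $H^{*}(BC_{G}(C);\Q_{p})$; since $C_{G}(\langle g\rangle)=C_{G}(C)=C\langle g\rangle$ for a generator $g$, and since such orbits correspond bijectively to $G$-conjugacy classes of elements of order $p^{k}$, reassembling the whole product over all primes $p$, all conjugacy classes of cyclic $p$-subgroups, and all $i$ yields precisely $\prod_{p}\prod_{[g]\in\con_{p}(G)}\prod_{i}H^{2i+m}(C\langle g\rangle;\Q_{p})$ --- with the term $\prod_{i}H^{2i+m}(G;\Q)$ coming from the trivial subgroup.

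I expect the principal obstacle to be twofold. First, one must verify that Lück's machinery genuinely applies to Borel-equivariant $K$-theory, i.e.\ that $X\mapsto K^{*}(E\Gamma\times_{\Gamma}X)$ is a proper multiplicative equivariant cohomology theory of the type his theorem accepts; this requires putting in place the necessary transfer/Mackey structure and multiplicativity. Second, and more delicate, is the bookkeeping in the last step: translating the intrinsic description --- indexed by conjugacy classes of finite cyclic \emph{subgroups} together with modules over rational Weyl-group rings --- into the clean element-indexed product over $\con_{p}(G)$ with honest $\Q_{p}$-coefficients, and checking that the completion-theoretic contributions at distinct primes really appear independently, with no surviving ``mixed-prime'' terms (the vanishing of $\Theta_{C}$ for $C$ not a prime-power group is exactly this phenomenon).
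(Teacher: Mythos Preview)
The paper does not prove this theorem: it is quoted verbatim as \cite[Theorem~0.1]{Luck2007} and used as a black box throughout. So there is no ``paper's own proof'' to compare against; the relevant comparison is with L\"uck's original argument.

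Your outline is essentially that argument. L\"uck's proof proceeds exactly by treating Borel $K$-theory as a proper equivariant cohomology theory with induction structure, applying his general equivariant Chern character (developed in \cite{Luck2005}) to the finite complex $\underline{E}G$, and then computing the ``Artin defect'' modules $\Theta_C$ at finite cyclic $C$ via the Atiyah--Segal completion theorem. The identification $\Theta_C=0$ unless $C$ is a nontrivial $p$-group, and $\dim_{\Q_p}\Theta_{\Z/p^k}=\varphi(p^k)$ with the free $W_G(C)$-action on generators giving the reindexing from cyclic subgroups to conjugacy classes of $p$-power order elements, is precisely how the $\Q_p$-coefficients and the product over $\con_p(G)$ arise in L\"uck's paper. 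Your two ``principal obstacles'' are real but are exactly the content of \cite{Luck2005,Luck2007}: the first is handled by L\"uck's axiomatic framework for proper equivariant cohomology theories with Mackey structure, and the second is the explicit computation carried out in \cite[\S2]{Luck2007}. Nothing in your sketch is wrong or off-track; it is a faithful summary of the published proof rather than an alternative to it.
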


\;

\noindent For virtually torsion-free groups with finite virtual cohomological dimension and certain geometric finiteness conditions, an analogous theorem was obtained by Adem \cite{Adem1992, Adem1993}.  

In this paper it will be convenient to mostly work with $p$-adic $K$-theory $K_p$ which sees the rational cohomology of $BG$ and the $p$-adic part of the above formula. The $p$-adic $K$-theory spectrum is defined as the $p$-completion of the complex $K$-theory spectrum \cite[Section 2]{Bousfield1979}. It follows from L\"uck's formula (see also \cite[Theorem 6.3]{Adem1993}) that the Chern character induces an isomorphism for any $m\in \Z$: 
\[\ch_p^m \colon K_p^m(BG) \otimes_{\Z} \Q \xrightarrow{\cong} \prod_{i \in \Z} H^{2i+m}(G; \Q_p) \times \prod_{[g] \in \con_{p}(G)} \prod_{i \in \Z} H^{2i+m}(C   \langle g \rangle; \Q_p). \]

\noindent Various calculations have been done using this formula. Groups to which this formula was applied include certain graph products, arithmetic groups and mapping class groups (see e.g. \cite{Adem1992, Adem1993, LPS2024}). We list some of them: the amalgamated product $L \ast_H M$, where $H \leq L,M$, and $H,L,M$ are finite, $\SL_3(\Z)$, $\SL_2(\mathcal{O}_F)$, where $F$ is a totally real field, $\SL_{p-1}(\Z)$, $\SP_{p-1}(\Z)$, for $p$ an odd prime, the mapping class group $\MCG(\Sigma_{\frac{p-1}{2}})$, for $p \geq 5$, and the right angled Coxeter group $W(L)$, where $L$ is a finite graph. (An alternative approach using a concrete cellular structure was used in \cite{DegrijseLeary} to compute the complex $K$-theory of $BW(L)$.)

The Chern character isomorphism of L\"uck shows that it is more difficult to compute the rational ($p$-adic) $K$-theory of $BG$ than the rational cohomology of $BG$ since the latter is a direct factor of the former. However, one could expect that if the centralisers are not too large, then the factor 
\[\prod_{[g] \in \con_{p}(G)} \prod_{i \in \Z} H^{2i+m}(C   \langle g \rangle; \Q_p)\]
might be easier to compute. This is indeed the case for certain groups and primes as we will see below. But before we mention these computations, we give a conceptual interpretation of the latter summand in terms of Klein's Farrell--Tate $K$-theory \cite{Klein2001, Klein2002}. 

Let $G$ be a virtual duality group in the sense of \cite[Section VIII.11]{BrownCohomology} with finite virtual cohomological dimension $d$. The classical Farrell--Tate cohomology groups $\widehat{H}^*(G,M)$, defined in \cite{Farrell} and \cite{BrownCohomology}, measure the failure of the duality map
\[H_*(G,D_G \otimes_{\Z} M) \to H^*(G,M)\]
being an equivalence. Here $D_G$ denotes the dualising module $H^d(G; \Z[G])$ and $M$ is any $\Z[G]$-module. If the Farrell--Tate cohomology vanishes for a general $M$, then $G$ is a Bieri--Eckmann duality group \cite{BieriEckmann}. If $M$ is a $\Q[G]$-module, then the Farrell--Tate cohomology groups $\widehat{H}^*(G,M)$ vanish, witnessing the fact that virtual duality groups are rational duality groups \cite[Theorem 9.2]{Bieri} (see also \cite[Section X.3]{BrownCohomology}). 

Now if we take any generalised cohomology theory $E^*(-)$ and a group $G$, following Klein \cite{Klein2001, Klein2002}, one can define the associated \emph{generalised Farrell--Tate cohomology} $\widehat{E}^*(BG)$ with coefficients in $E$. These groups measure the failure of duality in $E$-(co)homology. In the special case $E=K_p$, one obtains the \emph{$p$-adic Farrell--Tate $K$-theory groups} $\widehat{K_p}^*(BG)$. Before giving the technical definition of $\widehat{K_p}^*(BG)$, we observe that the importance of these groups becomes clear from the following proposition which we prove as Proposition \ref{prop:Tate K-theory} in Section \ref{Sec: Tate duality}:

\begin{propx}
    Let $G$ be a discrete group with a finite $\underline{E}G$ and $p$ a prime. Then for any $m\in \Z$, the equivariant Chern character induces an isomorphism
\[\widehat{K_p}^m(BG) \cong \prod_{[g] \in \con_{p}(G)} \prod_{i \in \Z} H^{2i+m}(C   \langle g \rangle; \Q_p).    \]
\end{propx}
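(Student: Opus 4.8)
The plan is to feed the $p$-adic form $\ch_p^m$ of L\"uck's Chern character (Theorem~\ref{thm:Wolfgang}) into the defining cofibre sequence for $\widehat{K_p}(BG)$ and read the answer off a long exact sequence. Recall from Section~\ref{Sec: Tate duality} that $\widehat{K_p}(BG)$ is, by construction, the cofibre of a norm (duality) map whose source is the ``homology side'' of $K_p$-theoretic duality for $G$ — i.e.\ the $K_p$-homology of $BG$ twisted by the dualising spectrum of $G$ — so that $\widehat{K_p}(BG)$ measures the failure of this duality, and in particular vanishes when $G$ is torsion-free of finite cohomological dimension. Taking homotopy groups yields a natural long exact sequence
\[ \cdots \longrightarrow A^m \xrightarrow{\ \nu^m\ } K_p^m(BG) \longrightarrow \widehat{K_p}^m(BG) \longrightarrow A^{m+1} \longrightarrow \cdots, \]
where $A^*$ denotes the $K_p$-homology (duality) side in the appropriate degrees.

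Next I would tensor this sequence with $\Q$ over $\Z$. It is essential to rationalise \emph{after} forming the cofibre: the Farrell--Tate construction does not commute with rationalisation — indeed $\widehat{H}^*(G;\Q_p)=0$ by the vanishing recalled in the introduction, whereas the right-hand side we are after is in general non-zero — so a naive ``rationalise $K_p$ first'' argument gives the wrong (zero) answer, and this is precisely where the problem has content. After $-\otimes_\Z\Q$ we obtain a long exact sequence of $\Q$-vector spaces in which, by $\ch_p^m$,
\[ K_p^m(BG)\otimes_\Z\Q \;\cong\; \prod_{i\in\Z} H^{2i+m}(G;\Q_p)\ \times\ \prod_{[g]\in\con_p(G)}\prod_{i\in\Z} H^{2i+m}(C\langle g\rangle;\Q_p), \]
while the Chern character for $K_p$-homology together with the (rational) duality for $G$ identifies $A^m\otimes_\Z\Q$ with the first factor $\prod_{i} H^{2i+m}(G;\Q_p)$.

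The crux — and the step I expect to be the main obstacle — is to show that under these identifications $\nu^m\otimes_\Z\Q$ is exactly the inclusion of $\prod_i H^{2i+m}(G;\Q_p)$ as the distinguished direct summand of $K_p^m(BG)\otimes_\Z\Q$. This amounts to matching the two incarnations of the ``trivial element'' contribution: the $[g]=[e]$ component of L\"uck's equivariant Chern character on the one side, and the Borel/duality part of Klein's norm map on the other. The hypothesis that $G$ admits a finite $\underline{E}G$ enters here in an essential way (it is what makes $BG$ rationally a duality space, and what makes $\con_p(G)$ finite with finite-dimensional centraliser cohomology, so that all products in sight are degreewise finite-dimensional). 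Granting this, $\nu^m\otimes_\Z\Q$ is a split monomorphism for every $m$, hence the connecting maps $\widehat{K_p}^m(BG)\otimes_\Z\Q\to A^{m+1}\otimes_\Z\Q$ vanish, the long exact sequence breaks into split short exact sequences, and the Chern character carries $\widehat{K_p}^m(BG)\otimes_\Z\Q\cong\operatorname{coker}(\nu^m\otimes_\Z\Q)$ isomorphically onto $\prod_{[g]\in\con_p(G)}\prod_{i\in\Z} H^{2i+m}(C\langle g\rangle;\Q_p)$. Since $\widehat{K_p}^m(BG)$ is already a $\Q_p$-vector space (as one checks from the construction in Section~\ref{Sec: Tate duality}), the tensor with $\Q$ may be dropped, giving the asserted isomorphism, induced by the equivariant Chern character. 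The remaining points (exactness of rationalisation, naturality of $\ch$ and of the cofibre sequence) are routine.
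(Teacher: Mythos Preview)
Your outline is correct and is essentially the paper's argument, carried out at the level of homotopy groups rather than spectra. Both proofs hinge on the same two inputs: that $K_p^{tG}$ is already rational (Proposition~\ref{Tate K-theory is rational}), and that, after rationalising, the norm map identifies $A^m\otimes_\Z\Q$ with the ``$[g]=[e]$'' summand of L\"uck's formula.

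The one place where the paper is sharper is your self-identified ``crux'': showing that $\nu^m\otimes_\Z\Q$ is the inclusion of $\prod_i H^{2i+m}(G;\Q_p)$. You describe this as a matching problem and gesture at rational duality for $BG$, but leave the mechanism vague. The paper avoids any delicate matching by comparing coefficient spectra: let $K_{\Q_p}$ be the rationalisation of $K_p$, and consider the commutative square of norm maps
\[
\xymatrix{
(D_G\otimes_{hG}K_p)\otimes\Q \ar[r]^-{N_G\otimes\Q} \ar[d]_\simeq & K_p^{hG}\otimes\Q \ar[d] \\
D_G\otimes_{hG}K_{\Q_p} \ar[r]^-{N_G} & K_{\Q_p}^{hG}.
}
\]
The left vertical is an equivalence because rationalisation commutes with homotopy colimits. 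The bottom horizontal is an equivalence by Example~\ref{rational vanishing} (for a rational coefficient spectrum, Proposition~\ref{prop: norm equivalence} reduces to the vanishing of rational Tate cohomology of finite groups). Under the Chern character the right vertical becomes the projection onto $\prod_i H^{2i+m}(G;\Q_p)$, since $K_{\Q_p}$ splits as a product of shifted copies of $H\Q_p$. This pins down $\nu^m\otimes_\Z\Q$ exactly, with no ad hoc comparison needed, and the rest of your argument then goes through verbatim.
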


The precise definition of the generalised Farrell--Tate cohomology is recalled in Section \ref{Sec: Klein}. Here we only give a sketch and compare it to the classical picture: In \cite{Klein2001} Klein introduces a spectrum $D_G$ with a $G$-action, called the \emph{dualising spectrum}. For any spectrum $E$ with a $G$-action, he also constructs the \emph{norm map}
\[ N_G \colon D_G \otimes_{hG} E \to E^{hG},\]
going from the homotopy orbits to the homotopy fixed points \cite{Klein2001}. The cofibre of this map is denoted by $E^{tG}$ and is called the \emph{generalised Farrell--Tate spectrum}. These objects and constructions are direct higher homotopical generalisations of classical objects and constructions in group cohomology. The dualising spectrum $D_G$ is a generalisation of the dualising module $H^d(G; \Z[G])$ of a virtual duality group $G$ with virtual cohomological dimension $d$. Homotopy orbits and fixed points are generalisations of (derived) $G$-coinvariants and $G$-invariants, respectively. The homotopy groups of the Farrell--Tate spectrum generalise the classical Farrell--Tate cohomology. The $p$-adic Farrell--Tate $K$-theory groups $\widehat{K_p}^*(BG)$ of $G$ are then defined to be the homotopy groups $\pi_{-\ast} (K_p^{tG})$, where $G$ acts trivially on $K_p$. In Section \ref{Sec: Klein} we show that if $G$ has a finite $\underline{E}G$, then the groups $\widehat{K_p}^*(BG)$ are rational (that is, they are vector spaces over $\Q$). 

\quad

As observed above, the Farrell--Tate $K$-theory measures the failure of duality in $K$-theory. Proposition A implies that $\widehat{K_p}^0(BG)$ never vanishes unless $G$ is $p$ torsion-free. However, $\widehat{K_p}^1(BG)$ can vanish and this leads to a notion of \emph{$p$-adic weak duality} which we introduce in Section \ref{Sec: Tate duality}. We show that $\widehat{K_p}^1(BG)$ vanishes if and only if the norm map
\[N_G \otimes_{\Z} \Q \colon (K_p)_{-1}(BG; D_G) \otimes_{\Z} \Q  \to K_p^1(BG) \otimes_{\Z} \Q \]
is an isomorphism. In Section \ref{Sec: Tate duality} we give examples of groups which satisfy this weak form of duality: the arithmetic groups $\SL_3(\Z)$, $\SP_{p-1}(\Z)$, and the mapping class group $\MCG(\Sigma_{\frac{p-1}{2}})$ (where $p \geq 5$). On the other hand we show that $\GL_{p-1}(\Z)$ for $p \geq 5$ does not satisfy this weak duality. We also show that the difference between the $\Q_p$-dimensions of $\widehat{K_p}^0(BG)$ and $\widehat{K_p}^1(BG)$ can be arbitrarily large. 

After understanding the generalities of the $p$-adic Farrell--Tate $K$-theory, we apply Proposition A to $\Out(F_n)$. In Section \ref{Sec: Main calc} we make a full calculation of $\widehat{K_p}^*(B\Out(F_n))$ in the following cases: 
\begin{itemize}
    \item For $p \geq 3$ and $n=p-1,p$;
    \item For $p \geq 5$ and $n=p+1,p+2$;
    \item For $p \geq 7$ and $n=p+3$. 
\end{itemize}
In this range $\Out(F_n)$ does not contain subgroups isomoprhic to  $\Z/p \times \Z/p$ or $\Z/p^2$ and the classical Farrell--Tate cohomology is computed in \cite{GloverMislin2000, Chen97}. The Farrell--Tate $K$-theory has not yet been explored. Chen's thesis \cite{Chen97} classifies conjugacy classes of order $p$ elements in this range excluding $n=p+1$ and shows that these elements are given by isometries of the rose and theta type graphs. Chen also computes the centralisers of these elements. This allows us to almost immediately compute $\widehat{K_p}^*(B\Out(F_n))$ in the above range when $n \neq p+1$. In particular, $\widehat{K_p}^1(B\Out(F_n))=0$ unless $n = p+1$. For $n=p+3$, we also need a recent calculation of $H^*(\Out(F_3); \bigwedge^{2} H_1(F_3;\Q))$ due to Satoh \cite{Satoh24}. 

The case $n = p+1$ requires a non-trivial geometric input. Chen's thesis provides 3 conjugacy classes of order $p$ elements: one given by an isometry of a rose $R_p$, and the other two by isometries of theta graphs named $\theta_{02}$ and $\theta_{11}$. All these admit obvious order $p$ lifts to $\Aut(F_{p+1})$. We show in Section \ref{section: Centraliser of Phi} that there is exactly one more conjugacy class of order $p$ elements, represented by an element which we denote by $\Phi$. This class does not admit an order $p$ lift to $\Aut(F_{p+1})$ which makes it non-trivial to study: it is induced by a rotation of the graph of Figure~\ref{fig:rep_intro}.

\begin{figure}
    \centering
    \begin{tikzpicture}[line width=1pt]
	\foreach \x in {-60, 0,60,120,180,240}{
		\draw[middlearrow={.55}{-20}{>}{}{}] (\x:2) to[out=\x-30, in=\x+30,loop,distance=1.5cm] (\x:2);
		\node[inner sep=2,circle,draw,fill] at (\x:2){};
        }
	\foreach \x in {-60, 0,60,120,180}{
		\draw (\x:2) edge[middlearrow={0.55}{0}{>}{}{}] (\x+60:2);}
	\draw[dotted] (250:1.9) to (290:1.9); \node at (270:1.5) {$p$};
\end{tikzpicture}
    \caption{}
    \label{fig:rep_intro}
\end{figure}

In Section \ref{section: Centraliser of Phi} we classify the dimensions where such elements can occur and compute the rational homology of the centraliser of $\Phi$. In order to do this, we define a map with finite kernel from $C\langle \Phi \rangle$ to a finite index subgroup of $\Out(F_2)$. This enables us to define an action of $C \langle\Phi\rangle$ on a tree (the reduced spine of $\CV_2$). The stabilisers in this action are finite, and so a Mayer--Vietoris argument shows that the rational homology agrees with that of the quotient graph. To calculate it, we must compute the number of edge and vertex orbits, which we do by means of an equivalent computation in finite groups. The rational homology is given in Corollary~\ref{cor: rational homology of CPhi}. We note that the element $\Phi$ has also been studied in \cite{BridsonPiwek}, where they show that it is the unique conjugacy class in $\Out(F_{p+1})$ defining a free-by-cyclic group isomorphic to $F_2 \times \Z$.

By combining the results of Section \ref{section: Centraliser of Phi} with Proposition A, we obtain the following theorem, which we prove as Theorem \ref{them: main computation} in Section \ref{Sec: Main calc}:

\begin{thmx}
    Let $p \geq 5$ be a prime. Then we have
\[\widehat{K_p}^0(B\Out(F_{p+1})) \cong \Q_p^4\]
and 
\[\widehat{K_p}^1(B\Out(F_{p+1})) \cong \begin{cases} 0, & \text{if}  \;\; p=5,7\\ \Q_p^{\frac{1}{24}(p-7)(p-5)}, &  \text{if} \;\; p \geq 11  \end{cases}. \]
\end{thmx}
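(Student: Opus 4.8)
The plan is to apply Proposition~A to the group $G=\Out(F_{p+1})$ and then combine the contributions of the (finitely many) conjugacy classes of order $p$ elements. Recall that $\Out(F_{p+1})$ admits a finite $\underline{E}G$, e.g.\ the spine of Culler--Vogtmann Outer space, and that in this range it contains no subgroup isomorphic to $\Z/p\times\Z/p$ or $\Z/p^2$, so that every non-trivial $p$-power order element has order exactly $p$. Proposition~A therefore gives, for all $m\in\Z$,
\[\widehat{K_p}^m(B\Out(F_{p+1}))\;\cong\;\prod_{[g]\in\con_p(\Out(F_{p+1}))}\ \prod_{i\in\Z}H^{2i+m}\bigl(C\langle g\rangle;\Q_p\bigr),\]
and it remains to (i) list the classes $[g]$ and (ii) compute $H^*(C\langle g\rangle;\Q_p)$ for each. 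By $2$-periodicity of $K$-theory only $m=0,1$ need to be treated.

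For (i) I would combine Chen's thesis with Section~\ref{section: Centraliser of Phi}. Chen produces three classes, represented by an isometry of a rose $R_p$ and by isometries of the theta graphs $\theta_{02}$ and $\theta_{11}$, all of which lift to order $p$ elements of $\Aut(F_{p+1})$; and Section~\ref{section: Centraliser of Phi} shows that $\Phi$ represents the unique remaining class (the unique one with no order $p$ lift to $\Aut(F_{p+1})$). Hence $\con_p(\Out(F_{p+1}))$ consists of exactly four classes.

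For (ii), take first the three classical classes. Chen computes their centralisers explicitly, and a direct inspection of these descriptions shows that $H^*(C\langle g\rangle;\Q_p)$ is $\Q_p$ concentrated in degree $0$ (equivalently, each such $C\langle g\rangle$ has the rational cohomology of a point); in particular each of these three classes contributes a single $\Q_p$ to the $m=0$ product and nothing to the $m=1$ product. For $\Phi$, the construction of Section~\ref{section: Centraliser of Phi} exhibits a finite-kernel homomorphism from $C\langle\Phi\rangle$ onto a finite-index, hence virtually free, subgroup of $\Out(F_2)$, so that $H^*(C\langle\Phi\rangle;\Q)$ is concentrated in degrees $0$ and $1$; Corollary~\ref{cor: rational homology of CPhi} evaluates it, giving $H_0(C\langle\Phi\rangle;\Q)\cong\Q$, $H_1(C\langle\Phi\rangle;\Q)\cong\Q^{\frac{1}{24}(p-7)(p-5)}$, and $H_i=0$ for $i\geq 2$. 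Passing to $\Q_p$-coefficients and dualising (using that $\Q_p$ is a $\Q$-vector space), $H^*(C\langle\Phi\rangle;\Q_p)$ is $\Q_p$ in degree $0$, $\Q_p^{\frac{1}{24}(p-7)(p-5)}$ in degree $1$, and $0$ above, so $\Phi$ contributes $\Q_p$ to the $m=0$ product and $\Q_p^{\frac{1}{24}(p-7)(p-5)}$ to the $m=1$ product.

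Substituting into the displayed formula yields $\widehat{K_p}^0(B\Out(F_{p+1}))\cong\Q_p^{4}$ and $\widehat{K_p}^1(B\Out(F_{p+1}))\cong\Q_p^{\frac{1}{24}(p-7)(p-5)}$. Finally I would note that the exponent is a bona fide non-negative integer: for a prime $p\geq 5$ the numbers $p-7$ and $p-5$ are consecutive even integers, so $8\mid(p-7)(p-5)$, and exactly one of $p-7,p-5$ is divisible by $3$ since $p\not\equiv 0\pmod 3$; moreover it vanishes precisely for $p\in\{5,7\}$, which recovers the case distinction in the statement. The real difficulty lies entirely in Section~\ref{section: Centraliser of Phi} --- both singling out $\Phi$ and, above all, carrying out the orbit count that produces $\dim_\Q H_1(C\langle\Phi\rangle;\Q)=\frac{1}{24}(p-7)(p-5)$ --- and may be taken as given here; within the present argument the only point that requires genuine care is checking that the three classical centralisers have rational cohomology concentrated in degree $0$, so that they contribute nothing to $\widehat{K_p}^1$ and only a single $\Q_p$ apiece to $\widehat{K_p}^0$.
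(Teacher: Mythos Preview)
Your proposal is correct and follows essentially the same route as the paper: apply Proposition~A, list the four conjugacy classes $R_p,\theta_{02},\theta_{11},\Phi$, verify the first three centralisers are rationally acyclic, and read off the contribution of $\Phi$ from Corollary~\ref{cor: rational homology of CPhi}. The paper is only slightly more explicit where you write ``direct inspection'': it uses Proposition~\ref{prop: rosethetacentralisers} to identify $C\langle\theta_{11}\rangle$ as finite, invokes \cite{HatcherVogtmann98} for the rational acyclicity of $\Aut(F_2)$ to handle $C\langle\theta_{02}\rangle\cong\Z/p\times\Aut(F_2)$, and runs the Lyndon--Hochschild--Serre spectral sequence twice for $C\langle R_p\rangle\cong\Z/p\times((\Z\rtimes\Z/2)\rtimes\Z/2)$.
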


As a consequence, we see that when $p \geq 11$, then 
\[\dim_{\Q_{p}} K^1_p(B\Out(F_{p+1})) \otimes_{\Z} \Q \geq \dim_{\Q_p} \widehat{K_p}^1(B\Out(F_{p+1})) = \frac{1}{24}(p-7)(p-5).\]
In particular, this produces a family of non-trivial odd dimensional classes in the $K$-theory of $B\Out(F_n)$. The first class occurs for $p=11$, where we get
\[\dim_{\Q_{11}} K^1_{11}(B\Out(F_{12})) \otimes_{\Z} \Q \geq \dim_{\Q_{11}} \widehat{K_{11}}^1(B\Out(F_{12})) = 1 .\]

In the final section of the paper, we combine our calculations with the computations of Vogtmann, Hatcher, Brady, Gerlits, Ohashi, Bartholdi and Satoh \cite{Brady93, HatcherVogtmann98, Vogtmann02, Gerlits, Ohashi, Bartholdi, Satoh24}. We produce a table of low dimensional values of $\widehat{K_p}^*(B\Out(F_{n}))$ (see Table~\ref{t:Farrell--Tate}) and  $K_p^*(B\Out(F_{n})) \otimes_{\Z} \Q$ (see Table~\ref{t:p-adic}) which can be computed using currently available methods. For $p$-adic Farrell--Tate $K$-theory the first odd dimensional class appears in $\widehat{K_{7}}^1(B\Out(F_{11}))$. The existence of this class requires Gerlits' computer calculation of $H_7(\Aut(F_5); \Q)$, which is isomorphic to $\Q$. The next non-trivial odd class shows up in $\widehat{K_{11}}^1(B\Out(F_{12}))$ which is the class we construct explicitly using the Chern character. Further, Bartholdi's computer calculation \cite{Bartholdi} implies that $K^1(B\Out(F_{7})) \otimes_{\Z} \Q$ is non-trivial and the above argument shows that $K^1(B\Out(F_{11})) \otimes_{\Z} \Q$ is non-zero. However, again the first non-trivial odd class in rational $K$-theory which we can see without computer calculations lives in $K^1(B\Out(F_{12})) \otimes_{\Z} \Q$.

\subsection*{Acknowledgements}
We would like to thank Takao Satoh and Ric Wade for helpful conversations. We would also like to thank the University of Oxford, the University of Aberdeen and the Isaac Newton Institute for Mathematical Sciences for hospitality. 

This work has received funding from the European Research Council (ERC) under the European Union's Horizon 2020 research and innovation programme (Grant agreement No. 850930). The first named author was supported by the Royal Society of Great Britain, and the second named author was supported by the EPSRC grant EP/X038424/1 ``Classifying spaces, proper actions and stable homotopy theory''. This work was also supported by the EPSRC grant no EP/R014604/1.  

\section{Preliminaries from homotopy theory} We will use the language of model categories for simplifying the exposition. See for example \cite{Quillen, Hovey} for the background in model categories. All the homotopy theoretic constructions in this paper can be also done with $\infty$-categories. 

The category of topological spaces is denoted by $\Top$. This is a model category with weak homotopy equivalences as weak equivalences  and Serre fibrations as fibrations. Typical cofibrant objects are cellular complexes. In particular, any CW-complex is cofibrant. 

Let $G$ be a discrete group. We let $\Top_G$ denote the category of $G$-spaces. This can be equipped with a projective model structure \cite{SchwedeShipley}, where weak equivalences are $G$-equivariant maps which are underlying non-equivariant weak equivalences. Cofibrant objects are given by cell complexes with a free $G$-action. In particular, if $X$ is a CW-complex with a $G$-action, then the projection map $X \times EG \to X$ is a cofibrant replacement. 

We will also need the category of spectra $\Sp$. Here we can take any symmetric monoidal model category of spectra such as symmetric or orthogonal spectra equipped with the 
projective model structure, see e.g. \cite{HSS, MMSS}. The symmetric monoidal structure on $\Sp$ is given by the smash product which we denote by $\otimes$. The unit of this structure is the sphere spectrum $\Ss$. Tautologically, the category $\Sp$ is equivalent to $\Ss-\Mod$. For non-experts, one can think of the symmetric monoidal category of spectra as the higher homotopical algebra analog of the category of abelian groups with the tensor product. The unit $\Ss$ then behaves like $\Z$, the unit of the tensor product. Given a spectrum $E$ and an integer $n$, one can define the $n$-th \emph{(stable) homotopy group} $\pi_nE$ as the group of homotopy classes of maps from the $n$-th suspension of $\Ss$ to $E$. In particular, one has $\pi_0\Ss \cong \Z$. A spectrum $E$ is called \emph{rational} if $\pi_nE$ is a rational vector space for any $n \in \Z$. 

A \emph{ring spectrum} (or an $\Ss$-\emph{algebra}) is an associative monoid in $\Sp$. The category $\Ss-\Alg$ of ring spectra can be equipped with the projective model structure where fibrations and weak equivalences are those morphisms in $\Ss-\Alg$ which are underlying fibrations and weak equivalences in $\Sp$, respectively. 
By \cite[Theorem 4.1(3)]{SchwedeShipley}, a cofibration of ring spectra is a cofibration of the underlying spectra if the source is cofibrant as a spectrum. 
For any ring spectrum $R$, the category of left and right modules $R-\Mod$ and $\Mod-R$ can be also equipped with projective model structures where fibrations and weak equivalence are given by the underlying notions on spectra \cite{SchwedeShipley}.

The categories of spaces and spectra are related by an adjunction. The suspension spectrum functor
\[\Sigma^{\infty}_+ \colon \Top \to \Sp  \]
is a left adjoint functor which is a left Quillen functor (here $+$ stands for adding a disjoint basepoint). In particular, it preserves cofibrations and weak equivalences between CW-complexes. 

Given a discrete group $G$, the suspension spectrum $\Sigma^{\infty}_+G$ of the discrete space $G$ becomes a ring spectrum using the multiplication of $G$. We denote this ring spectrum by $\Ss[G]$. One can think of $\Ss[G]$ as the higher algebra analog of the group ring $\Z[G]$. In particular, we have $\pi_0\Ss[G] \cong \Z[G]$. 

The category of abelian groups with a left (right) $G$-action is equivalent to the category of left (right) modules over $\Z[G]$. Analogously, the category of $G$-objects in $\Sp$ is equivalent to the module category $\Ss[G]-\Mod$. This is equipped with the standard model structure from \cite{SchwedeShipley} as discussed above. The suspension spectrum extends to a left Quillen functor on the level of $G$-objects
\[\Sigma^{\infty}_+ \colon \Top_G \to \Ss[G]-\Mod.\]
In particular, if $X$ is a CW-complex with a $G$-action, then $\Sigma^{\infty}_+(EG \times X)$ is a cofibrant replacement of $\Sigma^{\infty}_+X$. An object in $\Ss[G]-\Mod$ will be called a \emph{$G$-spectrum} for short. 

Let $H \leq G$ be a subgroup. The the \emph{restriction functor} \[\Res^G_H \colon  \Ss[G]-\Mod \to \Ss[H]-\Mod\]
is a right Quillen functor whose left adjoint is given by the \emph{induction functor}
\[G \otimes_{H}- \colon \Ss[H]-\Mod  \to \Ss[G]-\Mod .\]
These functors are analogs of the classical restriction and induction functors between $\Z[G]$-modules and $\Z[H]$-modules. 

We will also need the mapping spectrum functor which is the analog of the $\Hom$ functor in abelian groups. The smash product $\otimes$ in $\Sp$ has a right adjoint, the \emph{internal mapping spectrum functor} $\Map(-,-)$. This functor can be used to define the generalised cohomology theory associated to any spectrum. Given a (fibrant or equivalently omega) spectrum $E$ and a CW-complex $X$, the \emph{$n$-th $E$-cohomology group} of $X$, denoted by $E^n(X)$, is defined to be $\pi_{-n}\Map(\Sigma^{\infty}_+ X, E)$. 

If $E$ and $E'$ are $G$-spectra, then $\Map_G(E,E')$ denotes the mapping spectrum of $G$-equivariant maps. This is the fixed point spectrum of the mapping spectrum $\Map(E,E') \in \Ss[G]-\Mod$ where the action is given via the conjugation. 

Finally, we will crucially use the orbit and fixed point functors. The inflation functor
\[\Sp \to \Ss[G]-\Mod \]
which equips any spectrum with a trivial $G$-action has both left and right adjoints. The right adjoint sends a $G$-spectrum $E$ to the fixed point spectrum $E^G$ and the left adjoint to the orbit spectrum $E_G$. These functors are not always homotopically well-behaved, though: they are not homotopy invariant on underlying fibrant and cofibrant objects respectively. Because of this one introduces homotopy invariant versions: the \emph{homotopy fixed points} $E^{hG}$ of $E$ is defined to be the spectrum $\Map_G(\Sigma^{\infty}_{+}EG,E)$. This functor preserves weak equivalences between underlying fibrant objects. Similarly the \emph{homotopy orbits} $E_{hG}$ is defined to be the orbit spectrum $(E \otimes \Sigma_+^{\infty}EG)_G$. It sends weak equivalences between underlying cofibrant objects to weak equivalences. In particular, the action on $E$ does not have to be free for invariance of homotopy orbits. Again these definitions are motivated by the analogous notions in $\Z[G]$-modules, namely $G$-invariants and $G$-coinvariants. In what follows we will write $E^{hG}$ and $E_{hG}$ for not necessarily underlying fibrant and cofibrant $E$ implicitly replacing $E$ if necessary. 


\section{Klein's dualising complex and the norm map}\label{Sec: Klein}

In \cite{Klein2001} Klein introduced a $G$-spectrum $D_G$ called the \emph{dualising spectrum} and the \emph{norm map}
\[ N_G \colon D_G \otimes_{hG} E \to E^{hG},\]
where $E$ is any $G$-spectrum. In this section we recall Klein's construction and prove the following:

\begin{prop} \label{prop: norm equivalence}  Let $G$ be a discrete group with a finite $\underline{E}G$. Suppose $E$ is a ring spectrum which is underlying cofibrant and let $h \colon \Ss[G] \to E$ be a morphism of ring spectra. If for any finite subgroup $H \leq G$, the norm map
\[N_H \colon E_{hH} \to E^{hH}\]
is an equivalence, then the norm map
\[ N_G \colon D_G \otimes_{hG} E \to E^{hG},\]
is an equivalence. 
\end{prop}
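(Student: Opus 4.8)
The plan is to reduce the global statement to the hypothesis on finite subgroups by means of an induction-up-the-cells argument along a finite model for $\underline{E}G$. First I would recall from Klein's work \cite{Klein2001, Klein2002} the two key formal properties of the dualising spectrum and the norm map that make this strategy work: (1) the construction $E \mapsto (D_G \otimes_{hG} E \xrightarrow{N_G} E^{hG})$ is natural in $G$, and for a subgroup $H \leq G$ there is a compatibility $D_H \simeq \Res^G_H D_G$ together with an identification of the norm map $N_H$ for $E$ (restricted to $H$) with the "$H$-component" of $N_G$; and (2) both functors $E \mapsto D_G \otimes_{hG} E$ and $E \mapsto E^{hG}$ send (homotopy) pushouts of $G$-spectra built from induced cells $G/H \otimes (-)$ to homotopy pushouts, i.e.\ they are compatible with the cellular filtration of a $G$-CW complex. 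This is where the ring spectrum hypothesis enters: because $h \colon \Ss[G] \to E$ is a map of ring spectra, $E$ carries the structure needed to identify $(G \otimes_H \Ss) \otimes_{hG} E$ and $\Map_G(G \otimes_H \Sigma^\infty_+ EH, E)$ with $E_{hH}$ and $E^{hH}$ respectively (a Shapiro-type lemma / projection formula), so that the norm map on an induced cell $G/H$ literally becomes $N_H$.

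The main steps, in order, are as follows. Step 1: Since $G$ has a finite $\underline{E}G$, fix a finite $G$-CW model $X = \underline{E}G$; it is built from finitely many cells of the form $G/H \times D^n$ with $H$ running over finite subgroups of $G$ (all isotropy is finite, by definition of $\underline{E}G$). Step 2: Observe that $X$ is $G$-equivariantly contractible on all finite-isotropy fixed points, so $\Sigma^\infty_+ X$ can be used in place of $\Sigma^\infty_+ EG$ in the definitions of homotopy orbits and homotopy fixed points — or, more carefully, use that the map $EG \to EG \times X$ is a non-equivariant equivalence while $X$ has the cell structure we want, and track the norm map through this replacement. Step 3: Filter $X$ by its skeleta $X^{(0)} \subseteq X^{(1)} \subseteq \cdots \subseteq X^{(k)} = X$; at each stage $X^{(n)}$ is obtained from $X^{(n-1)}$ by a pushout attaching cells $G/H_i \times D^n$ along $G/H_i \times S^{n-1}$. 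Step 4: Prove by induction on $n$ that $N_G$ is an equivalence "with coefficients in $\Sigma^\infty_+ X^{(n)}$", meaning the map $D_G \otimes_{hG} (E \otimes \Sigma^\infty_+ X^{(n)}) \to \Map_G(\Sigma^\infty_+ X^{(n)}, E)$ (or the appropriately twisted version) is an equivalence. The base case and the inductive step both follow from the pushout/cofibre-sequence compatibility in property (2), using that a map of homotopy pushouts which is an equivalence on two of the three corners is an equivalence on the fourth (the "two-out-of-three for pushouts" via the long exact sequence of a cofibre sequence, since everything is stable). Step 5: For the cells themselves, the norm map on $G/H$ is $N_H \colon E_{hH} \to E^{hH}$ by the Shapiro-type identification of Step 0, and this is an equivalence by hypothesis; hence the inductive step closes. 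Step 6: At $n = k$ we get that $N_G$ with coefficients in $\Sigma^\infty_+ X$ is an equivalence, and since $X \simeq EG$ non-equivariantly (both are contractible), this is exactly the assertion that $N_G \colon D_G \otimes_{hG} E \to E^{hG}$ is an equivalence.

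The hard part is Step 0 / the bookkeeping in Step 4: making precise the claim that $N_G$ is "assembled from the $N_H$ along the cell structure", i.e.\ that $D_G \otimes_{hG} (E \otimes \Sigma^\infty_+(G/H)_+)$ models $E_{hH}$, that $\Map_G(\Sigma^\infty_+(G/H)_+, E)$ models $E^{hH}$, and that under these identifications the norm map $N_G$ restricts to $N_H$. The first of these is a projection-formula computation; the second is an equivariant Shapiro lemma; and the compatibility of the norm maps is the content of Klein's naturality result, which one must cite or re-derive carefully because the dualising spectrum $D_G$ does not split as a product over conjugacy classes of subgroups the way a finite group's Burnside-ring idempotents would suggest — the point is rather that $D_G \otimes \Sigma^\infty_+(G/H)_+$ becomes, after passing to $H$-fixed-point-level data, $D_H$, which is where the finiteness of $H$ (and hence $D_H \simeq \Ss$, Klein's computation for finite groups) ultimately gets used inside the hypothesis "$N_H$ is an equivalence". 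Once that identification is in hand, the induction is purely formal from stability and the finiteness of $\underline{E}G$.
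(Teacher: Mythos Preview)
Your overall strategy---cell-by-cell induction over a finite model of $\underline{E}G$, reducing to the hypothesis on each orbit $G/H$---is exactly the paper's approach. However, your specific formulation of the ``norm map with coefficients in $X$'' does not work as written, and the identifications you propose on the cell $X = G/H$ are not correct.

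The functor $X \mapsto D_G \otimes_{hG}(E \otimes \Sigma^\infty_+ X)$ is covariant in $X$ while $X \mapsto \Map_G(\Sigma^\infty_+ X, E)$ is contravariant, so there is no natural transformation between them specialising to $N_G$ at $X = \ast$. More concretely, your claimed identification $D_G \otimes_{hG}(E \otimes \Sigma^\infty_+ G/H) \simeq E_{hH}$ unwinds (via the projection formula) to $\Res^G_H D_G \simeq \Ss$ as right $H$-spectra, and this is false: if $G$ has virtual cohomological dimension $d \geq 1$ then $D_G$ lives in degree $-d$, so its restriction to a finite $H$ cannot be $D_H \simeq \Ss$. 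The slogan ``$D_G$ restricts to $D_H$'' you appeal to is simply not available for infinite-index subgroups.

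The paper's fix is to let $X$ vary on the \emph{other} side: one replaces $EG$ by $EG \times X$ simultaneously in the definition of the dualising spectrum and in the homotopy fixed points, obtaining
\[
\Map_G\bigl(\Sigma^\infty_+(EG \times X), \Ss[G]^f\bigr) \otimes_{hG} E^f \;\longrightarrow\; \Map_G\bigl(\Sigma^\infty_+(EG \times X), E^f\bigr),
\]
which is contravariant and exact in $X$ on both sides and recovers Klein's $N_G$ at $X = \underline{E}G$. On a cell $X = G/H$ the restriction--induction adjunction rewrites both sides in terms of $H$ alone; the comparison with $N_H$ is then made not via any ``$\Res^G_H D_G \simeq D_H$'' but through a commutative triangle in which the remaining map, induced by $\Ss[H] \hookrightarrow \Ss[G]$, is shown to be an equivalence using Klein's identification of the norm for induced $H$-spectra (his Corollary~10.2). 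So you have correctly located where the difficulty lies, but the mechanism for resolving it is different from the one you sketch.
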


We offer here a somewhat alternative description of the norm map in the special case when $E$ is a ring spectrum and the $G$-action on the underlying spectrum of $E$ comes from a ring spectrum map $h \colon \Ss[G] \to E$. This is motivated by the following simple algebraic observation: suppose $G$ is a finite group, $R$ is a ring and we are given a ring homomorphism $h \colon \Z[G] \to R$. We equip the underlying abelian group of $R$ with a left $G$-action via $h$. Then the group $R^G$ of $G$-invariants is a right $R$-module via the multiplication of $R$ from the right and hence the fixed point map
$h^G \colon \Z[G]^G \to R^G$ can be extended to a map
\[\Z[G]^G \otimes_{\Z} R \to R^G. \]
Composing with the isomorphism $\Z \cong \Z[G]^G$ sending $1$ to the norm element $\sum_{g \in G} g \in \Z[G]$, one recovers the norm map
$N \colon R \to R^G$ which then factors over the $G$-coinvariants $R_G$. 

Now we start with the setup of Proposition \ref{prop: norm equivalence}. We use the fibrant replacement functor $(-)^f$ on the category of ring spectra $\Ss-\Alg$, which comes with an acyclic cofibration $E \to E^f$. If $E$ is cofibrant as a spectrum, then by \cite[Theorem 4.1(3)]{SchwedeShipley}, the map $E \to E^f$ is a cofibration of spectra. This implies that $E^f$ has a cofibrant underlying spectrum too. 

\begin{defn}[\cite{Klein2001}] \label{def: dualising} Let $G$ be a discrete group. \emph{Klein's dualising spectrum} $D_G$ is the defined to be the homotopy fixed points spectrum
\[{(\Ss[G]^f)}^{hG}.\]
This spectrum is equipped with a right $G$-action via the right $G$-action on $\Ss[G]^f$.  
\end{defn}

In particular if $H$ is a finite group, by \cite[Theorem 10.1]{Klein2001}, the dualising spectrum is $H$-equivariantly equivalent to the sphere spectrum $\Ss$ with the trivial action and Klein's norm map 
\[ N_H \colon D_H \otimes_{hH} E \to E^{hH},\]
specialises to the norm map
\[N_H \colon E_{hH} \to E^{hH}.\]
By \cite[Corollary 10.2]{Klein2001}, this map is an equivalence for $H$-spectra of the form $\Ss[H] \otimes Y$, where $Y$ is any $H$-spectrum.

Let $X$ be a finite proper $G$-CW complex. The morphism of ring spectra
\[h^f \colon \Ss[G]^f \to E^f \]
induces a morphism on mapping spectra
\[h^f_* \colon \Map_G(\Sigma^{\infty}_+(EG \times X), \Ss[G]^f) \to \Map_G(\Sigma^{\infty}_+(EG \times X), E^f).\]
The target of this morphism is a right $E^f$-module using the multiplication of $E^f$ from the right. Hence $h_*^f$ induces a map
\[\overline{N} \colon \Map_G(\Sigma^{\infty}_+(EG \times X), \Ss[G]^f) \otimes E^f \otimes \Sigma^{\infty}_+EG  \to \Map_G(\Sigma^{\infty}_+(EG \times X), E^f),\]
where $EG$ gets projected down to the base-point. Analogously to the homotopy fixed points of $\Ss[G]$, the spectrum $\Map_G(\Sigma^{\infty}_+(EG \times X), \Ss[G]^f)$ has a right $G$-action induced by the right $G$-action on $\Ss[G]^f$. It is now straightforward to check that the map $\overline{N}$ factors through $G$-coinvariants yielding the norm map
\[N_G \colon \Map_G(\Sigma^{\infty}_+(EG \times X), \Ss[G]^f) \otimes_{hG} E^f  \to \Map_G(\Sigma^{\infty}_+(EG \times X), E^f).\]
It follows directly from the construction in \cite[Section 3]{Klein2001} that if we take $X=\underline{E}G$, then the map $N_G$ coincides with Klein's norm map. 

\qquad

\emph{Proof of Proposition \ref{prop: norm equivalence}.} We will show that the map 
\[N_G \colon \Map_G(\Sigma^{\infty}_+(EG \times X), \Ss[G]^f) \otimes_{hG} E^f  \to \Map_G(\Sigma^{\infty}_+(EG \times X), E^f)\]
is an equivalence for any finite proper $G$-CW complex $X$. Taking $X=\underline{E}G$ then will yield the statement of Proposition \ref{prop: norm equivalence}. The map $N_G$ is natural in $X$ and the source and target of $N_G$ send finite homotopy colimits to finite homotopy limits. Hence to show that $N_G$ is an equivalence, it suffices to show that it is an equivalence for $X=G/H$, where $H \leq G$ is any finite subgroup. By the restriction-induction adjunction the map
\[N_G \colon \Map_G(\Sigma^{\infty}_+(EG \times G/H), \Ss[G]^f) \otimes_{hG} E^f  \to \Map_G(\Sigma^{\infty}_+(EG \times G/H), E^f)\]
is equivalent to the map
\[ \Map_H(\Sigma_{+}^{\infty} EH, \Ss[G]^f) \otimes_{hG} E^f  \to \Map_H(\Sigma^{\infty}_+EH, E^f).\]
This map fits into a commutative diagram
\[\xymatrix{\Map_H(\Sigma_{+}^{\infty} EH, \Ss[G]^f) \otimes_{hG} E^f  \ar[r] & \Map_H(\Sigma^{\infty}_+EH, E^f) \\ \Map_H(\Sigma_{+}^{\infty} EH, \Ss[H]^f) \otimes_{hH} E^f  \ar[ur]_{N_H} \ar[u]^{\iota^H_*}, & & }\]
where the left vertical map is induced by the inclusion $\iota^H \colon \Ss[H] \to \Ss[G]$ and the diagonal map is Klein's norm for the finite subgroup $H$. Hence to show that $N_G$ is an equivalence it suffices to show that $\iota_*^H$ is an equivalence. By \cite[Corollary 10.2]{Klein2001} and naturality of the norm, $\iota_*^H$ is equivalent to the map 
\[\Ss[H \setminus H] \otimes_{hH} E^f \to \Ss[H \setminus G] \otimes_{hG} E^f \]
induced by the inclusion $H \leq G$. By definition of homotopy orbits this map is equivalent to the identity 
\[\Id \colon (E^f)_{hH} \to (E^f)_{hH}\]
which completes the proof. \qed

The following is due to Klein \cite{Klein2001}:

\begin{defn} The \emph{Farrell--Tate spectrum} of $G$ with coefficients in $E$ is the cofibre of the norm map and is denoted by $E^{tG}$. 
\end{defn}

The homotopy groups $\pi_{-\ast}(E^{tG})$ can be thought as the Farrell--Tate cohomology groups with coefficients in $E$. In what follows we denote $\pi_{-\ast}(E^{tG})$ by $\widehat{E}^*(BG)$.

By definition, we have a cofibre sequence 
\[\xymatrix{D_G \otimes_{hG} E \ar[r]^-{N_G} & E^{hG} \ar[r] & E^{tG}.}\]
Hence the norm is an equivalence if and only if the Farrell--Tate spectrum vanishes. 

\begin{ex} \label{rational vanishing} Suppose that we are given $G$, $E$, and $h \colon \Ss[G] \to E$ as in Proposition \ref{prop: norm equivalence}. Assume additionally that $E$ is rational. Then the Farrell--Tate spectrum $E^{tG}$ vanishes. Indeed this follows from Proposition \ref{prop: norm equivalence} and from the following two facts: Finite groups are rationally acyclic and the classical norm map from coinvariants to invariants is a rational isomorphism for finite groups.  
\end{ex}

Now let $K_p$ denote the $p$-adic complex $K$-theory spectrum which is a (cofibrant) ring spectrum. The augmentation $\Ss[G] \to \Ss$ composed with the unit $\Ss \to K_p$ gives a map of ring spectra $\Ss[G] \to K_p$ which equips $K_p$ with the trivial $G$-action. 

\begin{prop} \label{Tate K-theory is rational} Let $G$ be a discrete group with finite $\underline{E}G$. Then the Farrell--Tate spectrum $K_p^{tG}$ is rational. 
\end{prop}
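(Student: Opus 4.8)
The plan is to prove $K_p^{tG}$ is rational by showing that multiplication by every prime $\ell$ acts invertibly on it. Since $\Ss/\ell:=\mathrm{cofib}(\ell\colon\Ss\to\Ss)$ and $K_p^{tG}\otimes\Ss/\ell=\mathrm{cofib}(\ell\colon K_p^{tG}\to K_p^{tG})$, this is the same as showing $K_p^{tG}\otimes\Ss/\ell\simeq 0$ for every prime $\ell$; granting it, each $\pi_n(K_p^{tG})$ is uniquely divisible, hence a $\Q$-vector space, which is the claim. The whole argument then rests on reducing this vanishing to a statement about finite subgroups.

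For the reduction I would revisit the \emph{proof} of Proposition~\ref{prop: norm equivalence} (not its statement, which does not apply to $K_p$). That argument shows that in the $\underline{E}G$-model the norm $N_G$ is a map of functors in the finite proper $G$-CW complex $X$ whose source and target send finite homotopy colimits in $X$ to finite homotopy limits of spectra, and that for finite $H\le G$ the map $N_G(G/H)$ is equivalent to the classical norm $(K_p)_{hH}\to(K_p)^{hH}$ (the map $\iota^H_*$ occurring there is an equivalence in general). Passing to cofibres, which commute with homotopy limits in a stable category, the functor $X\mapsto\mathrm{cofib}(N_G(X))$ sends finite homotopy colimits to finite homotopy limits, with value $K_p^{tH}$ on $G/H$. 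Writing a finite model of $\underline{E}G$ as an iterated homotopy pushout of its finitely many orbit cells $G/H_i$ with $H_i$ finite, we get that $K_p^{tG}$ is a finite homotopy limit of the spectra $K_p^{tH_i}$. Next, $-\otimes\Ss/\ell\simeq\Map(\Sigma^{-1}\Ss/\ell,-)$, and $\Ss/\ell$ is a finite (hence dualizable) spectrum, so $-\otimes\Ss/\ell$ preserves homotopy limits and commutes with homotopy orbits and with homotopy fixed points (the latter being mapping spectra out of $\Sigma^\infty_+BH$). Hence $K_p^{tH}\otimes\Ss/\ell\simeq(K_p\otimes\Ss/\ell)^{tH}$, and $K_p^{tG}\otimes\Ss/\ell$ is a homotopy limit of the $(K_p\otimes\Ss/\ell)^{tH_i}$. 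So it is enough to prove $(K_p\otimes\Ss/\ell)^{tH}\simeq 0$ for every finite group $H$ acting trivially, and every prime $\ell$.

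If $\ell\ne p$ this is immediate: $\ell$ is a unit in $\pi_0K_p=\Z_p$, so $K_p\otimes\Ss/\ell\simeq 0$. If $\ell=p$ then $K_p\otimes\Ss/p$ is the mod-$p$ complex $K$-theory spectrum, which splits as a finite wedge of shifts of the first Morava $K$-theory $K(1)$; since $(-)^{tH}$ commutes with finite wedges and with suspension, $(K_p\otimes\Ss/p)^{tH}$ is a finite wedge of shifts of $K(1)^{tH}$, and this vanishes by the classical Tate vanishing of Morava $K$-theory for finite groups (Greenlees--Sadofsky). This closes the chain: $K_p^{tG}\otimes\Ss/\ell\simeq 0$ for all $\ell$, so $K_p^{tG}$ is rational. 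If one prefers to avoid Morava $K$-theory, the finite-group step $(K_p\otimes\Ss/p)^{tH}\simeq 0$ can instead be done by a Sylow transfer — $(K_p\otimes\Ss/p)^{tH}$ is a retract of $(K_p\otimes\Ss/p)^{tH_{(p)}}$ since $[H:H_{(p)}]$ is invertible on $K_p\otimes\Ss/p$ — followed by induction along a central $C_p\le H$ with quotient $Q$, using the cofibre sequence $(X_{hC_p})^{tQ}\to X^{tH}\to(X^{tC_p})^{hQ}$ (from the octahedral axiom applied to $X_{hH}\to(X_{hC_p})^{hQ}\to X^{hH}$, $X=K_p\otimes\Ss/p$) together with the fact that $X_{hC_p}=X\otimes\Sigma^\infty_+BC_p$ is a finite wedge of shifts of $X$, reducing everything to the case $H=C_p$.

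The main obstacle is the finite-group input $(K_p\otimes\Ss/p)^{tH}\simeq 0$, whose essential content is the case $H=C_p$, equivalently $K(1)^{tC_p}\simeq 0$; this is classical and can be cited, or checked by hand by verifying that the first differential in the Tate spectral sequence is forced, so that $E_\infty=0$. Everything else is bookkeeping inherited from Proposition~\ref{prop: norm equivalence}: that $\mathrm{cofib}(N_G)$ is genuinely assembled from the finite-subgroup Tate spectra $K_p^{tH_i}$ as above, and that smashing with the finite spectrum $\Ss/\ell$ is compatible with this assembly and with the passage to homotopy fixed points.
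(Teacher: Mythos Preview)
Your argument is correct and uses the same essential ingredients as the paper (reduction modulo $p$, the Adams splitting of $K_p/p$ into shifts of $K(1)$, and Greenlees--Sadofsky Tate vanishing for $K(1)$ over finite groups), but the organisation differs. The paper first observes that $K_p^{tG}$ is $p$-local, so only the prime $p$ needs attention; it then passes to $(K_p/p)^{tG}$, splits this via Adams into copies of $K(1)^{tG}$, and applies the \emph{statement} of Proposition~\ref{prop: norm equivalence} directly to the ring spectrum $K(1)$ (whose finite-subgroup hypothesis is exactly Greenlees--Sadofsky). You instead re-open the proof of Proposition~\ref{prop: norm equivalence} to exhibit $K_p^{tG}$ as a finite homotopy limit of the $K_p^{tH_i}$, push $-\otimes\Ss/\ell$ through that limit by dualisability, and only then reduce to $K(1)^{tH}$ for finite $H$. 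The paper's route is shorter and uses Proposition~\ref{prop: norm equivalence} as the intended black box; your route has the mild advantage of not needing to know that the Adams summands assemble into something to which the ring-spectrum hypotheses of Proposition~\ref{prop: norm equivalence} apply, and your alternative finite-group argument via Sylow transfer and the central $C_p$ filtration is a nice bonus not in the paper.
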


\begin{proof}
Since $K_p^{tG}$ is $p$-local, it suffices to show that the mod $p$ reduction $(K_p/p)^{tG}$ vanishes. By the Adams splitting we have
\[K_p/p \simeq \bigvee_{i=0}^{p-2} \Sigma^{2i} K(1), \]
where $K(1)$ is the first Morava $K$-theory ring spectrum at the prime $p$.  Again the unit precomposed with the augmentation gives a morphism of ring spectra 
\[\Ss[G] \to K(1)\]
which equips $K(1)$ with the trivial $G$-action. Since the norm map is natural, it further suffices to show that $K(1)^{tG}$ vanishes. This now follows from Proposition \ref{prop: norm equivalence} and a result of Greenlees-Sadofsky \cite{GreenleesSadofsky} which says that $K(1)^{tH}$ vanishes for $H$ a finite group. \end{proof}

\begin{rem} Let $K(n)$ denote the $n$-th Morava $K$-theory at the prime $p$. A forthcoming paper joint with Gijs Heuts will show that for any discrete group $G$ with a finite $\underline{E}G$ and a $K(n)$-local spectrum $E$ with a $G$-action, the Farrell--Tate spectrum
$E^{tG}$ vanishes $K(n)$-locally. (See also \cite{CCRY} for a more general result on Tate vanishing.) We will also give an alternative construction of $D_G$ in terms of the $\infty$-category of proper $G$-spectra of \cite{DHLPS} and \cite{LNP} and give an $\infty$-categorical proof of a more general version of Proposition \ref{prop: norm equivalence} which does not require $E$ to be a ring spectrum. We do not need these generalisations in the present paper though.

\end{rem}

\section{Farrell--Tate \texorpdfstring{$K$}{K}-theory and duality in complex \texorpdfstring{$K$}{K}-theory} \label{Sec: Tate duality}

In this section we give an explicit formula for the $p$-adic Farrell--Tate $K$-theory using L\"uck's Chern character. We will also explain how exactly Farrell--Tate $K$-theory measures failure of duality in $K$-theory. 

\begin{prop} \label{prop:Tate K-theory} Let $G$ be a discrete group with a finite $\underline{E}G$ and $p$ a prime. Then for any $m \in \Z$, the equivariant Chern character induces an isomorphism
\[\widehat{K_p}^m(BG) \cong \prod_{[g] \in \con_{p}(G)} \prod_{i \in \Z} H^{2i+m}(C   \langle g \rangle; \Q_p).   \]
\end{prop}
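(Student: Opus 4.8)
The plan is to read off the Farrell--Tate $K$-theory from the cofibre sequence
\[\xymatrix{D_G \otimes_{hG} K_p \ar[r]^-{N_G} & K_p^{hG} \ar[r] & K_p^{tG}}\]
after rationalising, using L\"uck's Chern character isomorphism (Theorem \ref{thm:Wolfgang}) to identify the two outer terms. First I would note that $K_p^{hG}$ is, essentially by definition of $E$-cohomology and of homotopy fixed points, the function spectrum $\Map_G(\Sigma^\infty_+ EG, K_p) \simeq \Map(\Sigma^\infty_+ BG, K_p)$ (since $G$ acts trivially on $K_p$), so that $\pi_{-m}(K_p^{hG}) = K_p^m(BG)$. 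By Proposition \ref{Tate K-theory is rational} the spectrum $K_p^{tG}$ is rational, hence $\widehat{K_p}^m(BG) = \pi_{-m}(K_p^{tG}) \otimes_\Z \Q$, so it suffices to compute everything after tensoring the long exact sequence of homotopy groups with $\Q$.

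Next I would identify the homotopy-orbit term. Since $G$ has a finite $\underline{E}G$, the Chern character of Adem/L\"uck (in the $p$-adic form displayed after Theorem \ref{thm:Wolfgang}) gives, for each $m$,
\[K_p^m(BG)\otimes_\Z\Q \;\xrightarrow{\;\cong\;}\; \prod_{i\in\Z} H^{2i+m}(G;\Q_p)\;\times\;\prod_{[g]\in\con_p(G)}\prod_{i\in\Z} H^{2i+m}(C\langle g\rangle;\Q_p),\]
and the first factor $\prod_i H^{2i+m}(G;\Q_p) = \big(H^*(BG;\Q_p)\big)^\wedge_{(2\Z+m)}$ is precisely the image of the rationalised norm map. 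The cleanest way to see this: the composite $\Ss[G]\to K_p$ factors through the augmentation, and under rationalisation $D_G\otimes_{hG}K_p$ with the norm into $K_p^{hG}$ recovers, on the level of the Chern character, the "$G$-coinvariants-to-invariants" picture. Concretely I would argue that the norm map, being natural and an equivalence on finite subgroups (the hypothesis of Proposition \ref{prop: norm equivalence} holds rationally for $K_p$ by Example \ref{rational vanishing} applied to $H$), induces on rationalised homotopy the inclusion of the "$BG$-part" $\prod_i H^{2i+m}(G;\Q_p)$ into the full Chern character target, split off by the restriction-to-the-trivial-family map $K_p^{hG}\to K_p^{hG}$ precomposed with $EG\to \underline{E}G$; since this inclusion is a split injection, the long exact sequence breaks into short exact sequences and
\[\widehat{K_p}^m(BG) \;=\; \operatorname{coker}\!\big(N_G\otimes\Q\big) \;\cong\; \prod_{[g]\in\con_p(G)}\prod_{i\in\Z} H^{2i+m}(C\langle g\rangle;\Q_p).\]

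The main obstacle, and the step requiring real care, is the middle one: rigorously identifying the rationalised norm map $N_G\otimes\Q$ with the canonical split inclusion of the $\prod_i H^{2i+m}(G;\Q_p)$-factor into L\"uck's Chern character target — i.e. showing that the source $D_G\otimes_{hG}K_p$ rationalises to exactly $\prod_i H^{2i+m}(G;\Q_p)$ and that the map is the obvious one. I would handle this either by invoking the description of $\underline{E}G/G$-assembly underlying L\"uck's theorem (the Chern character target is $\bigoplus_{(H)}$ over finite cyclic subgroups of contributions, and the norm isolates the trivial subgroup summand because it is an equivalence on each finite stabiliser), or, more directly, by comparing the cofibre $K_p^{tG}$ with the analogous construction over the family of non-trivial $p$-subgroups: rationally $K_p^{tG}$ only sees $p$-torsion (Proposition \ref{Tate K-theory is rational} already localises at $p$), and the same Mayer--Vietoris/Atiyah--Segal-type decomposition that proves L\"uck's formula expresses its homotopy as precisely the product over $\con_p(G)$ of the centraliser cohomologies. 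Once that identification is in place the proposition is immediate from the long exact sequence.
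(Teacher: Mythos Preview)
Your overall strategy is correct and is essentially the paper's: use Proposition~\ref{Tate K-theory is rational} to replace $K_p^{tG}$ by the cofibre of $N_G\otimes\Q$, identify $K_p^m(BG)\otimes_\Z\Q$ via L\"uck's formula, and show that the rationalised norm is the split inclusion of the factor $\prod_i H^{2i+m}(G;\Q_p)$ so that its cokernel is the product over $\con_p(G)$.

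The one place you diverge is the step you yourself flag as the main obstacle. The paper handles it in one stroke, without appealing to the internals of L\"uck's assembly or to a family comparison, by mapping to the rationalisation $K_{\Q_p}$ of $K_p$. Naturality of the norm gives the square
\[
\xymatrix{
(D_G\otimes_{hG}K_p)\otimes\Q \ar[r]^-{N_G\otimes\Q}\ar[d]_-{\simeq} & K_p^{hG}\otimes\Q \ar[d]\\
D_G\otimes_{hG}K_{\Q_p}\ar[r]^-{N_G}_-{\simeq} & K_{\Q_p}^{hG},
}
\]
where the left vertical is an equivalence because rationalisation commutes with homotopy colimits, and the bottom arrow is an equivalence by Example~\ref{rational vanishing}. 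Under the Chern character the right vertical becomes the projection onto the $\prod_i H^{2i+m}(G;\Q_p)$-factor (naturality of $\ch$ along $K_p\to K_{\Q_p}$), so $N_G\otimes\Q$ is precisely the section you wanted and the cofibre is identified immediately. This replaces both of your proposed arguments for that step with a two-line diagram chase.
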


\begin{proof} Let $K_{\Q_p}$ denote the rationalisation of the $p$-adic $K$-theory spectrum $K_p$. The Farrell--Tate spectrum $K_p^{tG}$ is rational by Proposition \ref{Tate K-theory is rational}. By naturality of the norm map \cite{Klein2001}, we have a commutative diagram of spectra with exact rows in the stable homotopy category:
\[\xymatrix{(D_G \otimes_{hG} K_p)  \otimes \Q \ar[d]_{\simeq} \ar[r]^-{N_G \otimes \Q} & K_p^{hG} \otimes \Q \ar[d] \ar[r] & K_p^{tG} \ar[d] \\ D_G \otimes_{hG} K_{\Q_p}   \ar[r]^-{N_G} & K_{\Q_p}^{hG} \ar[r] & K_{\Q_p}^{tG}.}\]
The left vertical map is an equivalence since the rationalisation commutes with (homotopy) colimits.
Hence the construction and naturality of the Chern character \cite{Luck2005} and Theorem \ref{thm:Wolfgang} imply that it suffices to show that the left lower map is an equivalence. 
This follows from Example \ref{rational vanishing}. \end{proof}

\begin{ex} If $G$ is a finite group, then all the centralisers are finite and hence rationally acyclic. We recover a well-known computation of the Tate construction for equivariant $K$-theory for finite groups (see e.g. \cite[page 125]{JackowskiOliver} and \cite[Theorem 13.1]{GreenleesMay}):
\[\widehat{K_p}^{2m}(BG) \cong \prod_{[g] \in \con_{p}(G)} \Q_p,\]
and
\[\widehat{K_p}^{2m+1}(BG) =0,\]
for any integer $m \in \Z$. 
\end{ex}

Klein has observed \cite{Klein2001}, that the Farrell--Tate spectrum $E^{tG}$ sees the failure of (Poincar\'e) duality in $E$-theory. Since topological $K$-theory is $2$-periodic, Proposition \ref{prop:Tate K-theory} allows us to measure exactly how significant this failure of duality is. In fact, in certain cases we have some weak forms of duality. 

For convenience we denote the homotopy groups $\pi_*(D_G \otimes_{hG} K_p)$ of the homotopy orbit spectrum by $(K_p)_*(BG; D_G)$. One can think of these groups as the $p$-adic $K$-homology groups with coefficients in the local coefficient system determined by $D_G$. Then by definition and Bott periodicity, we have a six-term exact sequence
\[\xymatrix{(K_p)_0(BG; D_G) \ar[r] & K_p^0(BG) \ar[r] & \widehat{K_p}^0(BG) \ar[d] \\ \widehat{K_p}^1(BG) \ar[u] & K_p^1(BG) \ar[l] &  (K_p)_{-1}(BG; D_G). \ar[l] }\]
In particular, we see from this long exact sequence that the rationalised norm map
\[(K_p)_{-*}(BG; D_G) \otimes_{\Z} \Q  \xrightarrow{N_G\otimes_{\Z} \Q} K_p^{*}(BG) \otimes_{\Z} \Q \]
is never an isomorphism of graded $\Q$-vector spaces unless the group $G$ is $p$-power torsion-free. Indeed by Proposition \ref{prop:Tate K-theory}, the group $\widehat{K_p}^0(BG)$ always contains the product $\prod_{[g] \in \con_{p}(G)} \Q_p$ as a retract. 

\begin{defn} Let $G$ be a discrete group with a finite $\underline{E}G$ and $p$ a prime. We say that $G$ satisfies \emph{weak duality in $p$-adic $K$-theory} if the map 
\[N_G \colon (K_p)_0(BG; D_G) \to K_p^0(BG)\]
is injective and the map 
\[N_G \colon (K_p)_{-1}(BG; D_G) \to K_p^1(BG)\]
is surjective. 
\end{defn}

\begin{prop} \label{prop:Weak duality} Let $G$ be a discrete group with a finite $\underline{E}G$ and $p$ a prime. Then the following are equivalent:

\rm (i) The group $G$ satisfies weak duality in $p$-adic $K$-theory;

\rm (ii) The $p$-adic Farrell--Tate $K$-theory group $\widehat{K_p}^1(BG)$ vanishes;

\rm (iii) The map
\[N_G \otimes_{\Z} \Q \colon (K_p)_{-1}(BG; D_G) \otimes_{\Z} \Q  \to K_p^1(BG) \otimes_{\Z} \Q \]
is an isomorphism. 

\rm (iv) For any $p$-power order element $g \in G$, the centraliser $C\langle g \rangle$ has even rational cohomology. 

\end{prop}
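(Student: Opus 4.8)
The plan is to read off all four equivalences from the six-term exact sequence displayed above, which comes from the norm cofibration $D_G\otimes_{hG}K_p\xrightarrow{N_G}K_p^{hG}\to K_p^{tG}$, combined with Proposition~\ref{prop:Tate K-theory}, Theorem~\ref{thm:Wolfgang}, and the fact that $\widehat{K_p}^{*}(BG)$ is a $\Q$-vector space (Proposition~\ref{Tate K-theory is rational}). Write $N_0,N_{-1}$ for the norm maps $(K_p)_0(BG;D_G)\to K_p^0(BG)$ and $(K_p)_{-1}(BG;D_G)\to K_p^1(BG)$, write $q$ for the maps $K_p^m(BG)\to\widehat{K_p}^m(BG)$ induced by $K_p^{hG}\to K_p^{tG}$, and write $\delta$ for the connecting maps. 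Then (i)~$\Leftrightarrow$~(ii) is a pure diagram chase: by exactness $\ker N_0=\operatorname{im}\bigl(\delta\colon\widehat{K_p}^1(BG)\to(K_p)_0(BG;D_G)\bigr)$, so $N_0$ is injective iff that $\delta$ is the zero map; and $\operatorname{coker}N_{-1}\cong\operatorname{im}\bigl(q\colon K_p^1(BG)\to\widehat{K_p}^1(BG)\bigr)$, so $N_{-1}$ is surjective iff that $q$ vanishes, which by exactness at $\widehat{K_p}^1(BG)$ is the same as the displayed $\delta$ being injective. Hence weak duality (i) asserts precisely that this $\delta$ is simultaneously zero and injective, i.e.\ that its source $\widehat{K_p}^1(BG)$ vanishes; this is (ii).

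For (ii)~$\Leftrightarrow$~(iv), Proposition~\ref{prop:Tate K-theory} gives $\widehat{K_p}^1(BG)\cong\prod_{[g]\in\con_p(G)}\prod_{i\in\Z}H^{2i+1}(C\langle g\rangle;\Q_p)$, so (ii) is equivalent to $H^{2i+1}(C\langle g\rangle;\Q_p)=0$ for every $[g]\in\con_p(G)$ and every $i$ (in (iv) the identity element is, as always, excluded from the $p$-power order elements, in accordance with the indexing set $\con_p(G)$). It then remains to trade $\Q_p$-coefficients for $\Q$-coefficients. The fixed subcomplex $\underline{E}G^{\langle g\rangle}$ is a finite-dimensional contractible complex on which the normaliser $N_G\langle g\rangle$ — and hence its finite-index subgroup $C\langle g\rangle$ — acts cocompactly with finite stabilisers, which is a standard consequence of cocompactness of $\underline{E}G$; since we are in characteristic zero, $\Q$ is projective over the group ring of each (finite) cell stabiliser, so the cellular chains of $\underline{E}G^{\langle g\rangle}$ form a finite resolution of $\Q$ by finitely generated projective $\Q[C\langle g\rangle]$-modules. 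Thus $C\langle g\rangle$ is of type $\mathrm{FP}$ over $\Q$, whence $H^{*}(C\langle g\rangle;\Q_p)\cong H^{*}(C\langle g\rangle;\Q)\otimes_{\Q}\Q_p$ and vanishing of a cohomology group does not depend on the coefficient field. This is (ii)~$\Leftrightarrow$~(iv).

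For (ii)~$\Leftrightarrow$~(iii) I rationalise the six-term sequence; it stays exact since $\widehat{K_p}^0(BG)$ and $\widehat{K_p}^1(BG)$ are already rational. The key input is that under the Chern character the map $q\otimes\Q\colon K_p^m(BG)\otimes\Q\to\widehat{K_p}^m(BG)$ is surjective in each degree (equivalently, every connecting map $\delta$ of the rationalised sequence vanishes). One obtains this by carrying the proof of Proposition~\ref{prop:Tate K-theory} one step further: in its ladder comparing the rationalised norm cofibre sequence of $K_p$ with that of $K_{\Q_p}$, the left vertical and the bottom norm are equivalences (the latter by Example~\ref{rational vanishing}, since $K_{\Q_p}$ is rational), so the composite $v\circ(N_G\otimes\Q)$ is an equivalence, where $v\colon K_p^{hG}\otimes\Q\to K_{\Q_p}^{hG}$ is the middle vertical; and $v$ is, on homotopy and via naturality of the Chern character, the projection of $\prod_{i}H^{2i+m}(G;\Q_p)\times\widehat{K_p}^m(BG)$ (the target of L\"uck's isomorphism, rewritten by Proposition~\ref{prop:Tate K-theory}) onto its first factor. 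It follows that $N_G\otimes\Q$ is injective on every homotopy group with cokernel $\widehat{K_p}^{*}(BG)$, realised by $q\otimes\Q$, so the rationalised six-term sequence breaks into short exact sequences
\[0\longrightarrow(K_p)_n(BG;D_G)\otimes\Q\xrightarrow{\,N_G\otimes\Q\,}K_p^{-n}(BG)\otimes\Q\xrightarrow{\,q\otimes\Q\,}\widehat{K_p}^{-n}(BG)\longrightarrow0,\]
whence $N_{-1}\otimes\Q$ is an isomorphism exactly when its cokernel $\widehat{K_p}^1(BG)$ vanishes, i.e.\ exactly under (ii). This closes the cycle of equivalences.

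The hard part is the input used in the last paragraph — identifying $q$ on rationalised $K$-theory with a (split) surjection onto $\widehat{K_p}^{*}(BG)$. This cannot be extracted from the six-term exact sequence by itself: one must, for instance, rule out the possibility that $\delta\colon\widehat{K_p}^0(BG)\to(K_p)_{-1}(BG;D_G)\otimes\Q$ is nonzero even when $\widehat{K_p}^1(BG)=0$, which would break (ii)~$\Rightarrow$~(iii). It rests on the compatibility of L\"uck's equivariant Chern character with the Tate construction — the same ingredient already relied on, somewhat tacitly, in the proof of Proposition~\ref{prop:Tate K-theory}. Making that compatibility precise, rather than invoking ``naturality of the Chern character'' as a black box, is where most of the care will be needed; the remaining implications are either formal consequences of exactness or immediate from Proposition~\ref{prop:Tate K-theory}.
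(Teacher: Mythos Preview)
Your proof is correct and follows essentially the same strategy as the paper's: (i)$\Leftrightarrow$(ii) via the six-term exact sequence, (ii)$\Leftrightarrow$(iv) via Proposition~\ref{prop:Tate K-theory}, and (ii)$\Leftrightarrow$(iii) via the (split) surjectivity of $K_p^*(BG)\otimes\Q\to\widehat{K_p}^*(BG)$, which the paper simply cites from the construction of L\"uck's Chern character and which you instead extract from the ladder diagram in the proof of Proposition~\ref{prop:Tate K-theory}. Your treatment is more detailed than the paper's---notably the $\Q$ versus $\Q_p$ coefficient argument in (ii)$\Leftrightarrow$(iv), which the paper leaves implicit---and your closing paragraph correctly identifies where the substantive input lies.
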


\begin{proof} The equivalence $(i) \Longleftrightarrow (ii)$ is an easy consequence of the six term exact sequence. The equivalence $(ii) \Longleftrightarrow (iii)$ follows from the construction of the Chern character \cite{Luck2007}, Proposition \ref{Tate K-theory is rational}, and Proposition \ref{prop:Tate K-theory} which imply that the map 
\[K_p^*(BG) \otimes_{\Z} \Q \to \widehat{K_p}^{*}(BG) \]
is split surjective. The equivalence $(iv) \Longleftrightarrow  (ii)$ follows from Proposition \ref{prop:Tate K-theory}. 
\end{proof}

\begin{rem} Proposition \ref{prop:Weak duality} motivates more careful study of the rational cohomology of the centralisers $C\langle g \rangle$, where $g$ is a $p$-power order element. In particular, detecting non-trivial odd dimensional cohomology classes for the centralisers would automatically imply that $\widehat{K_p}^1(BG)$ does not vanish and hence the group $G$ does not have weak duality in $p$-adic $K$-theory. We also note that any non-trivial odd dimensional class in $\widehat{K_p}^1(BG)$ gives a non-trivial odd dimensional class in the rational $K$-theory $K^*(BG) \otimes_{\Z} \Q$ which does not come from the rational cohomology.  Below we will detect these kinds of odd dimensional classes in $K^*(B\Out(F_{p+1})) \otimes_{\Z} \Q$ when $p \geq 11$. For example, we will construct a class in $K^1(B\Out(F_{12})) \otimes_{\Z} \Q$ which does not come from $H^*(\Out(F_{12}); \Q)$.

When $G$ satisfies weak duality in $p$-adic $K$-theory, i.e. $\widehat{K_p}^1(BG)=0$, then $\widehat{K_p}^0(BG)$ measures the exact failure of honest duality. The smaller this is, the closer to honest duality we get. By the $6$-term exact sequence, the cokernel of $N_G \colon (K_p)_0(BG; D_G) \to K_p^0(BG)$ injects into $\widehat{K_p}^0(BG)$ and $\widehat{K_p}^0(BG)$ surjects onto the kernel of $N_G \colon (K_p)_{-1}(BG; D_G) \to K_p^1(BG)$. The best possible case is when all centralisers of $p$-power order elements are rationally acyclic. In this case $\widehat{K_p}^0(BG) \cong \prod_{[g] \in \con_{p}(G)} \Q_p$. 

\end{rem}

\section{Examples} 

In this section we compute some examples and study the behavior of their $p$-adic Farrell--Tate $K$-theory. In general all possibilities can occur: all centralisers can be rationally acyclic, $\widehat{K_p}^*(BG)$ can be even while some centralisers are not rationally acyclic, $\widehat{K_p}^1(BG)$ might not be trivial and the \emph{Farrell--Tate $K$-theory Euler characteristic} \[\widehat{\chi_{K_p}}(BG)=\dim_{\Q_p}\widehat{K_p}^0(BG) -\dim_{\Q_p}\widehat{K_p}^1(BG)\] can be arbitrarily small or large. Simple examples can be provided by direct products $G \times \Z/p$, where $G$ is a $p$-torsion free group with a finite $\underline{E}G$. For instance we could take the following examples to cover some of these cases: $G=F_n$ and $p$ any prime, $G=\Sp_4(\Z)$ and $p \geq 7$, $G=\SL_4(\Z)$ and $p \geq 7$, and $G=\SL_2(\Z)$ and $p=5$. We do not go into details of these examples because they are straightforward. All these examples have central order $p$ elements and hence centralisers are very large (in fact the full group). Our goal below is to provide interesting examples where centralisers are quite small, showing the usefulness of Proposition \ref{prop:Tate K-theory}.

We start with arithmetic groups. It follows from \cite{Ji2007} that any arithmetic group admits a finite $\underline{E}G$. 

\subsection{The general linear group \texorpdfstring{$\SL_3(\Z)$}{SL3(Z)}} The group $\SL_3(\Z)$ has only $2$ and $3$-primary torsion. The $K$-theory of $B\SL_3(\Z)$ is computed in \cite{TezukaYagita92} and is shown to be even. Soul\'e in \cite{Soule78} provided a finite cellular model of $\underline{E}\SL_3(\Z)$ and showed that $\underline{E}\SL_3(\Z)/\SL_3(\Z)$ is contractible. Hence, $\SL_3(\Z)$ is in particular rationally acyclic. The papers \cite{Adem1992} and \cite{LPS2024} show that the centralisers of prime power order elements are rationally acyclic. This implies that for any prime $p$, we have
\[\widehat{K_p}^1(B\SL_3(\Z))=0\]
and hence $\SL_3(\Z)$ satisfies weak duality in $p$-adic $K$-theory. By \cite{TezukaYagita92} and \cite{Tahara71}, we know the classification of finite order elements: there are $2$ conjugacy classes of order $3$ elements, $2$ conjugacy classes of order $2$ elements and $2$ conjugacy classes of order $4$ elements. Hence 
\[\widehat{K_2}^0(B\SL_3(\Z)) \cong \Q_2^4,\]
and 
\[\widehat{K_3}^0(B\SL_3(\Z)) \cong \Q_3^2.\] 

\subsection{The general linear group \texorpdfstring{$\GL_{p-1}(\Z)$}{GLp-1(Z)} for a prime \texorpdfstring{$p \geq 5$}{p at least 5}} It follows from \cite[Section 2]{Ash89} and \cite{LevittNicolas98} that any non-trivial $p$-subgroup of $\GL_{p-1}(\Z)$ is isomorphic to $\Z/p$. Further the paper \cite{LatimerMacDuffee33} (see also \cite{SjerveYang97, Ash89}) implies that the conjugacy classes of order $p$ elements are in bijection with the ideal class group $\Cl(\mathbb{Q}(\zeta_p))$. It follows from \cite[Lemma 4]{Ash89} that $C\langle A \rangle $ for any $A$ of order $p$ is isomorphic to the group of units in $\Z[\zeta_p]$ which by the Dirichlet unit theorem is isomorphic to
\[\Z/p \times \Z/2 \times \Z^{\frac{p-3}{2}}.\]
Hence by Proposition \ref{prop:Tate K-theory}, we get
\[\widehat{K_p}^0(B\GL_{p-1}(\Z))=(\Q_p)^{\vert \Cl(\mathbb{Q}(\zeta_p) \vert \cdot 2^{\frac{p-5}{2}}} ,\]
and 
\[\widehat{K_p}^1(B\GL_{p-1}(\Z))=(\Q_p)^{\vert \Cl(\mathbb{Q}(\zeta_p) \vert \cdot 2^{\frac{p-5}{2}}}.\]
These formulas combined with L\"uck's theorem (Theorem \ref{thm:Wolfgang}) recover a calculation of Adem (\cite[Example 6.7]{Adem1993} and \cite[Example 4.5]{Adem1992}). We see that the group $\GL_{p-1}(\Z)$ does not satisfy weak duality in $p$-adic $K$-theory and the Farrell--Tate $K$-theory Euler characteristic vanishes. 

\subsection{The symplectic group \texorpdfstring{$\SP_{p-1}(\Z)$}{Sp p-1(Z)} for a prime \texorpdfstring{$p \geq 5$}{p at least 5}}

It follows from \cite[Section 2]{Ash89} and \cite{LevittNicolas98} that any non-trivial $p$-subgroup of $\Sp_{p-1}(\Z)$ is isomorphic to $\Z/p$. The paper \cite{SjerveYang97} shows that the conjugacy classes of the elements of order $p$ in $\Sp_{p-1}(\Z)$ are in bijection with the set of equivalence classes of the pairs $(I, a)$, where $I$ is an integral ideal in $\Z[\zeta_p]$ and $I \cdot \overline{I}=(a)$, where $a=\overline{a}$. It follows form \cite[Theorem 3]{SjerveYang97} that the number of such equivalence classes is equal to $2^{\frac{p-1}{2}}h_p^{-}$, where 
\[h_p^{-}=\frac{\vert \Cl(\mathbb{Q}(\zeta_p) \vert }{\vert \Cl(\mathbb{Q}(\zeta_p+\zeta_p^{-1}))\vert}\] 
is the relative class number. By \cite[Section 3.2]{Busch2002} and \cite{Brown74}, we know that for any element $A$ of order $p$, the centraliser of $A$ is finite and
\[C\langle A \rangle \cong \Z/p \times \Z/2.\] 
As a consequence by Proposition \ref{prop:Tate K-theory}, we get
\[\widehat{K_p}^1(B\Sp_{p-1}(\Z))=0,\]
and
\[\widehat{K_p}^0(B\Sp_{p-1}(\Z)) \cong (\Q_p)^{2^{\frac{p-1}{2}}h_p^{-}}.\]
In particular, $\Sp_{p-1}(\Z)$ satisfies weak duality in $p$-adic $K$-theory.

\subsection{The mapping class group \texorpdfstring{$\MCG(\Sigma_{\frac{p-1}{2}})$}{} for a prime \texorpdfstring{$p \geq 5$}{p at least 5}}

Let $\MCG(\Sigma_{\frac{p-1}{2}})$ be the mapping class group of the closed oriented surface of genus $\frac{p-1}{2}$. The papers \cite{Broughton90, JiWolpert2010, Mislin2010} show that mapping class groups admit a finite model for $\underline{E}G$. 

It follows from \cite[Chapter III]{Xia90} that any non-trivial finite $p$-subgroup of $\MCG(\Sigma_{\frac{p-1}{2}})$ is isomorphic to $\Z/p$. By \cite[Section 3.1]{Xia90}, the number of conjugacy classes of order $p$ elements is equal to
\[\frac{(p+1)(p-1)}{6}.\]
Additionally it follows from \cite[Section 3.2]{Xia90} that the centralisers of order $p$ elements are finite. Hence, by Proposition \ref{prop:Tate K-theory}
\[\widehat{K_p}^1(B\MCG(\Sigma_{\frac{p-1}{2}}))=0\]
and 
\[\widehat{K_p}^0(B\MCG(\Sigma_{\frac{p-1}{2}})) \cong (\Q_p)^{\frac{(p+1)(p-1)}{6}}.\]
In particular, $\MCG(\Sigma_{\frac{p-1}{2}})$ satisfies weak duality in $p$-adic $K$-theory.

\subsection{Amalgamated products of finite groups} Interesting examples arise by considering amalagamated products of finite groups. By Bass--Serre theory \cite{Serre2003}, we know that any such group admits a finite $\underline{E}G$, its Bass--Serre covering tree. Consider
\[G= (\Z/p \rtimes \Z/(p-1)) \ast_{\Z/p}  (\Z/p \rtimes \Z/(p-1) ),\]
where $p$ is an odd prime and $\Z/(p-1)$ is acting as the automorphism group of $\Z/p$. Then $G$ has only one conjugacy class of order $p$ elements and its centraliser is isomorphic to
\[\Z/p \times F_{p-2},\]
where $F_{p-2}$ is the free group on $p-2$ generators. 
By Proposition \ref{prop:Tate K-theory}, we get
\[\widehat{K_p}^0(BG)=\Q_p,\]
and 
\[\widehat{K_p}^1(BG)=\Q_p^{p-2}.\]
This group does not satisfy weak duality in $p$-adic $K$-theory. The Farrell--Tate $K$-theory Euler characteristic is equal to $3-p$ and goes to $-\infty$ when the prime increases. We note though that the group $G$ is rationally acyclic.

To summarise, we saw that the groups $\SL_3(\Z)$, $\Sp_{p-1}(\Z)$, $\MCG(\Sigma_{\frac{p-1}{2}})$ satisfy weak duality, while $\widehat{K_p}^0(BG)$ can be an arbitrarily large finite dimensional $\Q_p$-vector space. The group $\GL_{p-1}(\Z)$ provides an example where $\widehat{K_p}^0(BG)$ and $\widehat{K_p}^1(BG)$ have same dimension (increasing with $p$) and hence the Farrell--Tate $K$-theory Euler characteristic vanishes. Aside from $\SL_3(\Z)$ which is rationally acyclic by \cite{Soule78}, the rational cohomologies of these groups are not fully computed. The last example shows that $\widehat{K_p}^1(BG)$ can be arbitrarily large while $\widehat{K_p}^0(BG)$ is as small as possible and $G$ is rationally acyclic. Below we will show that $\Out(F_{p+1})$ provides a similar example to the latter but in this case the rational cohomology is very interesting and far away from being computed.

\section{Elements of order \texorpdfstring{$p$}{p} and their centralisers} \label{section: Centraliser of Phi} 

In this section we study conjugacy classes of order $p$ elements in $\Out(F_n)$ when $p-1 \leq n \leq 2p-3$ and their centralisers. The results when $n \neq p+1$ are well known and can be found in \cite{Chen97, GMV98}. By \cite[Corollary 2.2]{GMV98} the group $\Out(F_n)$ is $p$-periodic for $p-1 \leq n \leq 2p-3$, i.e. it does not contain $\Z/p \times \Z/p$ as a subgroup. Additionally, the natural map $\Out(F_n) \to \GL_n(\Z)$ has torsion-free kernel and there are no order $p^2$ elements in $\GL_n(\Z)$ in this range  \cite{LevittNicolas98}. Hence any finite $p$-subgroup 
of $\Out(F_n)$ for $p-1 \leq n \leq 2p-3$ is isomorphic to $\Z/p$. 

All the graphs we consider in this section are connected. We establish the following conventions for our actions. (Note that as soon as one of these is decided, the others follow.) \begin{enumerate}
	\item $F_n$ acts on trees on the left;
	\item $\Aut(F_n)$ acts on $F_n$ on the left;
	\item $\Aut(F_n)$ (hence $\Out(F_n)$) acts on $\CV_n$ on the right.
\end{enumerate}

We need the following theorem, the $\Out(F_n)$ realisation theorem, which was proved independently by Culler \cite{Culler1984}, Khramtsov \cite{Khramtsov1987} and Zimmermann \cite{Zimmerman1981Realisation}.
\begin{thm}\label{thm: realisation}
	Every finite subgroup of $\Out(F_n)$ is realised by a group of isometries of a finite graph; every finite subgroup of $\Aut(F_n)$ is realised by such a group of isometries preserving some basepoint.
\end{thm}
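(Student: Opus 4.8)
The statement to prove is Theorem~\ref{thm: realisation}, the $\Out(F_n)$ realisation theorem: every finite subgroup of $\Out(F_n)$ is realised by a group of isometries of a finite graph, and every finite subgroup of $\Aut(F_n)$ is realised by such a group fixing a basepoint.

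The plan is to use the action of $\Out(F_n)$ on Culler--Vogtmann Outer space $\CV_n$ (or its spine), together with a fixed-point argument for finite groups acting on this contractible space with good properties. First I would recall that $\CV_n$ (equivalently, its spine $K_n$, the geometric realisation of the poset of free minimal simplicial $F_n$-trees with no invariant proper subtree, up to scaling and equivariant homeomorphism) is a contractible finite-dimensional space on which $\Out(F_n)$ acts properly, and that the spine is a finite-dimensional CW (or simplicial) complex with finite-dimensional locally finite structure, so that the quotient is compact — this is exactly the content of \cite{CullerVogtmann1986CVn}. A finite subgroup $H \leq \Out(F_n)$ then acts on this contractible complex.

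Next I would invoke a fixed-point principle: a finite group acting simplicially (after suitable subdivision, without inversions) on a contractible finite-dimensional complex that is CAT(0) or otherwise has the topological fixed point property must fix a point; more robustly, one can use that $\CV_n$ admits an $\Out(F_n)$-equivariant deformation retraction onto a finite-dimensional simplicial complex on which the action is proper and cocompact, and finite groups acting on contractible complexes of this type fix a cell. Concretely, a point of $\CV_n$ fixed by $H$ corresponds to an $F_n$-tree $T$ (a metric graph with $\pi_1 = F_n$) together with an identification under which $H$ acts by isometries permuting the marking compatibly — which is precisely a realisation of $H$ as isometries of a finite graph $T/F_n$ whose fundamental group is identified with $F_n$ so that the induced outer action is the given one. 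The hard part is making this translation precise: unwinding the definition of the $\Out(F_n)$-action on $\CV_n$ to see that an $H$-fixed point genuinely yields a graph of rank $n$ with an isometric $H$-action inducing the prescribed subgroup of $\Out(F_n)$, rather than merely a subgroup conjugate to it or a quotient of it. This requires checking that $H$ embeds (not just maps) into the isometry group of the fixed graph, which follows because the $\Out(F_n)$-action is proper so point stabilisers are finite and the map from the stabiliser to the isometry group of the underlying graph is injective on the relevant finite subgroup.

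For the $\Aut(F_n)$ statement, I would run the analogous argument with pointed Outer space (or Auter space / the Hatcher--Vogtmann spine $\mathrm{A}_n$ of based marked graphs), on which $\Aut(F_n)$ acts properly with compact quotient: a finite subgroup of $\Aut(F_n)$ fixes a point of this space, which is a marked graph with a chosen basepoint, and the induced isometric action preserves the basepoint; passing to $\pi_1$ based at that point recovers the given subgroup of $\Aut(F_n)$. Alternatively, and perhaps cleaner for the exposition, I would deduce the $\Aut$ version from the $\Out$ version: given finite $H \leq \Aut(F_n)$ with image $\bar H \leq \Out(F_n)$ (the restriction $H \to \bar H$ is an isomorphism when $n \geq 2$ since $F_n$ is centreless, so inner automorphisms intersect $H$ trivially unless trivial), realise $\bar H$ on a finite graph $\Gamma$ by the first part, lift to an action on the universal cover tree $\tilde\Gamma$, and observe that the $H$-action on $F_n \cong \pi_1(\Gamma)$ by the prescribed automorphisms picks out an $H$-fixed vertex of $\tilde\Gamma$ (again by a fixed-point argument for a finite group acting on a tree, \cite{Serre2003}); projecting back gives a basepoint of $\Gamma$ fixed by the isometric action and realising $H \leq \Aut(F_n)$ exactly. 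I expect the main obstacle throughout to be bookkeeping with markings: ensuring the realisation recovers the original subgroup on the nose and not just up to conjugacy, which is where properness of the action (finiteness of stabilisers and injectivity of stabiliser-to-isometry-group maps) does the essential work.
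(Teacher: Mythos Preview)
The paper does not give its own proof of this theorem; it is quoted from the literature (Culler, Khramtsov, Zimmermann) and used as a black box. Nonetheless, your proposal has a genuine gap worth flagging.

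Your argument hinges on a ``fixed-point principle'' for a finite group acting on the spine $K_n$: you assert that a finite group acting simplicially on a contractible finite-dimensional complex with proper cocompact action must fix a point, or alternatively that a CAT(0) structure is available. Neither is justified. Contractibility alone does not force fixed points for finite group actions --- Floyd and Richardson produced a fixed-point-free simplicial action of $A_5$ on a contractible finite $2$-complex --- and no invariant CAT(0) metric on $\CV_n$ or its spine is known. In fact, the statement that every finite subgroup of $\Out(F_n)$ fixes a point of $\CV_n$ \emph{is} the realisation theorem, so invoking it is circular. The later result of Krsti\'c--Vogtmann that $K_n$ is a model for $\underline{E}\Out(F_n)$ (so fixed-point sets of finite subgroups are contractible, in particular nonempty) builds on the realisation theorem rather than providing an independent route to it.

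The actual proofs take a different line, which the paper itself alludes to in the proof of Corollary~\ref{cor: strange order p element}(3): for finite $H \leq \Out(F_n)$, the preimage $\widehat{H} \leq \Aut(F_n)$ is a finitely generated virtually free group (an extension of $F_n$ by $H$), and by Karrass--Pietrowski--Solitar such a group acts on a tree with finite vertex stabilisers. The normal subgroup $F_n$ then acts freely on this tree, producing a finite graph with fundamental group $F_n$ on which $H \cong \widehat{H}/F_n$ acts by isometries realising the given outer action. Your deduction of the $\Aut(F_n)$ statement from the $\Out(F_n)$ statement, by lifting to the universal cover and using that a finite group acting on a tree fixes a point, is correct and standard.
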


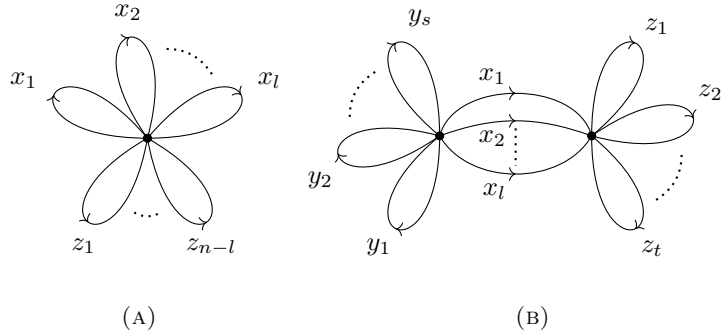
\begin{figure}[h]
\begin{subfigure}{.3\linewidth}
    \begin{tikzpicture}
	\coordinate (v0) at (0,0);
	\draw[fill,black] (v0) circle (1.5pt);
	\draw[middlearrow={.5}{20}{>}{$x_1$}{above left =2pt, rotate=-65}] (v0) to[out=180, in=130,loop,distance=2cm] (v0);
	\draw[middlearrow={.5}{20}{>}{$x_2$}{above=3pt,rotate=-10}] (v0) to[out=125, in=75,loop,distance=2cm] (v0);
    \draw[middlearrow={.5}{20}{>}{$x_l$}{above right=2pt,rotate=65}] (v0) to[out=50, in=0,loop,distance=2cm] (v0);
    \draw[dotted,bend right,thick] (45:1.2) to (80:1.2);

    \draw[middlearrow={.5}{-20}{>}{$z_1$}{below =3pt, rotate=35}] (v0) to[out=210, in=260,loop,distance=2cm] (v0);
    \draw[middlearrow={.5}{-20}{>}{$z_{n-l}$}{below=3pt, rotate=-35}] (v0) to[out=280, in=330,loop,distance=2cm] (v0);
    \draw[dotted,bend right,thick] (260:1) to (280:1);
\end{tikzpicture}
    \caption{\label{fig:rose}}
\end{subfigure}
\begin{subfigure}{.5\linewidth}
    \begin{tikzpicture}
	\coordinate (v0) at (0,0);
	\draw[fill,black] (v0) circle (1.5pt);
	\coordinate (v1) at (2,0);
	\draw[fill,black] (v1) circle (1.5pt);
	\draw[middlearrow={.5}{20}{>}{$y_1$}{above=15pt, rotate=-155}] (v0) to[out=270, in=220,loop,distance=2cm] (v0);
	\draw[middlearrow={.5}{20}{>}{$y_2$}{left=10pt,rotate=-100}] (v0) to[out=215, in=165,loop,distance=2cm] (v0);
    \draw[middlearrow={.5}{20}{>}{$y_s$}{above right=2pt,rotate=-25}] (v0) to[out=140, in=90,loop,distance=2cm] (v0);
    \draw[dotted,bend right,thick] (135:1.2) to (170:1.2);

    \draw[middlearrow={.5}{20}{>}{$z_1$}{above right, rotate=25}] (v1) to[out=90, in=40,loop,distance=2cm] (v1);
	\draw[middlearrow={.5}{20}{>}{$z_2$}{right=-10pt,rotate=80}] (v1) to[out=35, in=-15,loop,distance=2cm] (v1);
    \draw[middlearrow={.5}{20}{>}{$z_t$}{above=15pt,rotate=155}] (v1) to[out=-40, in=-90,loop,distance=2cm] (v1);
    \draw[dotted,bend right,thick] ($(v1)+(-45:1.2)$) to ($(v1)+(-10:1.2)$);

    \draw[middlearrow={.5}{0}{>}{$x_1$}{above left}] (v0) to[out=75, in=105] (v1);
    \draw[middlearrow={.5}{0}{>}{$x_2$}{below left}] (v0) to[out=20, in=160] (v1);
    \draw[middlearrow={.5}{0}{>}{$x_l$}{below left}] (v0) to[out=-60, in=-120] (v1);
    \draw[dotted,thick] (1,0.1) -- (1,-0.4);
\end{tikzpicture}
    \caption{\label{fig:theta}}
\end{subfigure}
\caption{Graphs with isometries realising $R_l$ and $\theta_{st}$
\label{fig:rose_and_theta}}
\end{figure}

First we review the rose and theta type elements $R_l$ and $\theta_{st}$, where $2 \leq l \leq n$ and $0\leq s \leq t \leq n$ and $s+t+l-1=n$. They are described algebraically for instance in \cite{Chen97}; geometrically they can be realised by isometries of the graphs in Figure~\ref{fig:rose_and_theta}. The element $R_l$ is realised on the rose in (A), where it cyclically permutes the edges labelled $x_i$ and leaves the rest fixed. The element $\theta_{st}$ is realised on the graph in (B), by an isometry cyclically permuting the edges labelled $x_i$ and leaving the others fixed. In this case there is a choice of basepoint to be made when defining an element of $\Aut(F_n)$: the convention is that the basepoint for $\theta_{st}$ is chosen to be the one on the right hand side of the figure. If $s\neq t$ these choices give elements $\theta_{st}$ and $\theta_{ts}$ that are not conjugate in $\Aut(F_n)$ but whose images become conjugate in $\Out(F_n)$. These elements have order $l$ and their centralisers in $\Out(F_n)$ are computed in \cite[Section 2.2 and Section 2.3]{Chen97}:

\begin{prop} \label{prop: rosethetacentralisers} Let $2 \leq l \leq n$ and $0\leq s \leq t \leq n$ and $s+t+l-1=n$. Then we have
\[C \langle R_l \rangle \cong \Z/l \times ((F_{n-l} \rtimes \Aut(F_{n-l})) \rtimes \Z/2)\]
and 
\[C \langle \theta_{st} \rangle \cong \Z/l \times ((\Aut(F_s) \times \Aut(F_t)) \rtimes \Z/2^{\delta_{st}}),\]
where $\delta_{st}$ is the Kronecker delta. 
\end{prop}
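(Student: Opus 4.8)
The plan is to compute each centraliser directly by analysing the fixed-point data of the realising isometry, following the recipe of \cite{Chen97} but spelled out geometrically. For $R_l$: realise $R_l$ as the order $l$ rotation $\sigma$ of the graph $\Gamma$ in Figure~\ref{fig:rose}\subref{fig:rose} that cyclically permutes the petals $x_1,\dots,x_l$ and fixes the petals $z_1,\dots,z_{n-l}$. Any automorphism commuting with $R_l$ can, after the realisation Theorem~\ref{thm: realisation} applied to the finite group $\langle R_l\rangle\times\langle\psi\rangle$ for each $\psi\in C\langle R_l\rangle$, be assumed to act on a $\sigma$-invariant graph; I would first argue that one may stay on $\Gamma$ itself (using that $\Gamma$ is the unique reduced representative of its type, or passing to the core/blow-down), so that $C\langle R_l\rangle$ is the image in $\Out(F_n)$ of the group of graph symmetries together with the "twisting" automorphisms supported away from the rotated petals. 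The rotated petals contribute the cyclic factor $\Z/l$ generated by $R_l$ itself (together with the flip reversing the cycle, which accounts for the outer $\Z/2$); the fixed petals $z_1,\dots,z_{n-l}$ together with the common vertex form a rose on $n-l$ petals whose symmetries-and-twists realise $F_{n-l}\rtimes\Aut(F_{n-l})$ (the free factor $F_{n-l}$ coming from the "partial conjugations" by the rotated loop that move the fixed petals' basepoint around the $\sigma$-orbit). Assembling these pieces and checking they commute and generate gives the stated product.

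For $\theta_{st}$ the argument is parallel but now the realising graph has two vertices $u,v$ joined by $l$ edges $x_1,\dots,x_l$ that are cyclically permuted, with a rose of $s$ petals at $u$ and a rose of $t$ petals at $v$; the rotation $\tau$ fixes both vertices. A commuting automorphism must preserve the vertex set, and when $s\neq t$ it must preserve each vertex (so no symmetry swapping $u$ and $v$), whereas when $s=t$ there is an extra involution exchanging the two ends — this is the source of the exponent $\delta_{st}$ on the $\Z/2$. The symmetries and twists supported at $u$ realise $\Aut(F_s)$, those at $v$ realise $\Aut(F_t)$, the edge bundle contributes the central $\Z/l=\langle\theta_{st}\rangle$, and the reversal of the edge cycle (combined with the possible end-swap) gives the remaining $\Z/2$ factor. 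Again one checks commutativity and generation to get $\Z/l\times((\Aut(F_s)\times\Aut(F_t))\rtimes\Z/2^{\delta_{st}})$.

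Concretely I would organise this as: (1) fix the realising graph $\Gamma$ and the isometry $\sigma$ or $\tau$; (2) show every element of the centraliser is conjugate into $\Aut$ or $\Out$ of $\Gamma$ preserving the isometry, using Theorem~\ref{thm: realisation} on the finite abelian group it generates with $R_l$ (resp.\ $\theta_{st}$) plus a collapsing argument to reduce to $\Gamma$ itself; (3) identify the group of isometry-commuting automorphisms of $\Gamma$ by decomposing $\Gamma$ into the $\sigma$-orbit of edges and the fixed part, reading off graph symmetries, edge-reversals, and Nielsen twists supported on the fixed subgraph; (4) match this explicitly with the claimed semidirect/direct product by writing down generators; (5) verify injectivity of the resulting map into $\Out(F_n)$ (i.e.\ that distinct such automorphisms remain non-conjugate/distinct), which amounts to noting the only relations are those already built into the product. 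Since most of the content is bookkeeping of generators on an explicit graph, none of these steps is genuinely difficult, but the main obstacle is step (2): justifying that one does not need to pass to a larger $\sigma$-invariant graph than $\Gamma$, i.e.\ that every commuting automorphism is realised by a symmetry of this particular minimal model rather than of some subdivision or blow-up; this is exactly the kind of normal-form argument carried out in \cite{Chen97}, and I would either cite it directly or reprove it via the uniqueness of the reduced representative in the relevant deformation space.
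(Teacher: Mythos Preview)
The paper does not prove this proposition at all: it is quoted as a known computation from Chen's thesis \cite[Sections 2.2--2.3]{Chen97}, with only the explicit $\Z/2$-actions recorded afterwards. So your fallback plan of simply citing Chen is exactly what the paper does.

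Your sketched direct argument has a genuine gap at step~(2). You propose applying Theorem~\ref{thm: realisation} to the ``finite abelian group'' generated by $R_l$ and an arbitrary $\psi\in C\langle R_l\rangle$, but that theorem only realises \emph{finite} subgroups of $\Out(F_n)$, and the centralisers here are infinite --- the $R_l$ centraliser contains an entire $F_{n-l}$. So a generic centralising $\psi$ cannot be turned into a graph isometry this way, and the reduction to ``symmetries of $\Gamma$'' does not go through. Chen's actual argument is algebraic: write $R_l$ (resp.\ $\theta_{st}$) on an explicit basis and solve the commutation equations in $\Aut(F_n)$, then pass to the outer quotient. Two of your identifications are also off. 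The outer $\Z/2$ for $R_l$ is \emph{not} the dihedral flip of the rotated petals: that flip conjugates $R_l$ to $R_l^{-1}$ and so only normalises, never centralises. The correct generator is the involution $x_i\mapsto x_i^{-1}$, $z_j\mapsto z_j$, which visibly commutes with $R_l$ and whose conjugation action $(x,\sigma)\mapsto(x^{-1},\alpha_x\sigma)$ is exactly what the paper records after the statement. Likewise, for $\theta_{st}$ the edge-cycle reversal does not centralise; the $\Z/2^{\delta_{st}}$ is purely the end-swap when $s=t$ (again as stated after the proposition), with no edge reversal involved.
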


Here $\Z/2$ acts on $F_{n-l} \rtimes \Aut(F_{n-l})$ by sending $(x,\sigma)$ to $(x^{-1}, \alpha_x \sigma)$, where $x \in F_{n-l}$, and $\sigma \in \Aut(F_{n-l})$, and $\alpha_x(t)=xtx^{-1}$ (see e.g. \cite[page 32]{Chen97}), and when $t=s$, then $\Z/2$ acts on $\Aut(F_t) \times \Aut(F_t)$ by flipping the factors. 

By construction, the rose and theta type elements of order $l$ come from order $l$ elements in $\Aut(F_n)$. In fact in the range $p-1 \leq n \leq 2p-3$ any order $p$ element of $\Out(F_n)$ is conjugate to a theta or rose type element except when $n=p+1$ \cite[Proposition 3.1.2]{Chen97}. The rest of the section is devoted to investigating this last case. 

We show in Corollary \ref{cor: strange order p element} that up to conjugacy there is exactly one element of order $p$ in $\Out(F_{p+1})$ which does not lift to an order $p$ element in $\Aut(F_{p+1})$ (every other order $p$ element in $\Out(F_{p+1})$ is conjugate to a rose or theta type element). We denote this element by $\Phi$. The element $\Phi$ and its centraliser seem not to be well-studied. Its existence and uniqueness has been recently independently proved by Bridson and Piwek \cite[Proposition 7.1]{BridsonPiwek}. We compute the centraliser of $\Phi$ which seems to be a new result. In particular, the calculations of \cite{Chen97} and \cite{GloverMislin2000} do not give any information about the centraliser. 

Our arguments take place in the (reduced) equivariant spine, introduced by Krstic--Vogtmann \cite{KrsticVogtmann1993} and subsequently studied for instance by Bestvina--Feighn--Handel \cite{BestvinaFeighnHandel2023McCoolGroups}. For a finite order element $\Phi$ of $\Out(F_n)$, or more generally a finite subgroup, this is a certain subspace of the outer space $\CV_n$. It is a subcomplex of the spine of $\CV_n$, consisting of fixed points of $\Phi$, so it is (like the spine itself) a simplicial complex. It is not an $\Out(F_n)$-invariant subspace, but it is preserved by the centraliser and normaliser in $\Out(F_n)$ of $\Phi$. It plays two dual roles in our proof. The first is in solving the conjugacy problem, following Krstic--Lustig--Vogtmann's algorithm \cite{KrsticLustigVogtmann}; two finite order elements of $\Out(F_n)$ are conjugate if they are induced by the same automorphism of the same (unmarked) graph; understanding the options amounts to understanding the vertex orbits of the centraliser action on the equivariant spine. In particular, we will use the elementary equivariant expansions and collapses introduced by Krstic--Vogtmann to enable us to find identical representatives. The second is in understanding the centraliser through its action on this spine: while strictly speaking we work in the spine of outer space corresponding to a different free group, work of Lyman \cite{LymanIsometries} implies that there is an equivariant isometry between these spines.

\begin{prop}
	\label{prop:nice_representative}
	Suppose $\Phi$ is an element of $\Out(F_n)$ of order $p$ a prime, and is not represented by an element of $\Aut(F_n)$ of order $p$. Then $\Phi$ is realised by an automorphism $f$ of a graph $\Gamma$ having the following properties: \begin{enumerate}		
		\item The automorphism $f$ acts freely on $\Gamma$;
		\item There is a single orbit of vertices;
		\item There is an orbit of edges forming a $p$-cycle, and (choosing the appropriate orientation) with $f(\iota(e))=\tau(e)$, where $\iota(e)$ denotes the source vertex and $\tau(e)$ denotes the target vertex;
		\item All edges outside this orbit are loops at a vertex, satisfying $\iota(e)=\tau(e)$.
	\end{enumerate}
\end{prop}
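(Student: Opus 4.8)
The plan is to use the $\Out(F_n)$ realisation theorem (Theorem~\ref{thm: realisation}) to get a starting graph $\Gamma_0$ on which $\Phi$ acts by isometries, and then improve it using the elementary equivariant expansions and collapses of Krstic--Vogtmann until it satisfies all four properties. Throughout, the obstruction to the conclusion is the existence of an order $p$ lift to $\Aut(F_{p+1})$, so every time a desirable normal form fails, I will extract from that failure such a lift, contradicting the hypothesis.

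First I would establish property~(1). Since $\Phi$ has prime order $p$, an element of $\Gamma_0$ is either fixed by $f$ or has orbit of size exactly $p$. If $f$ fixes a vertex $v$, then by the realisation theorem for $\Aut$ (the second clause of Theorem~\ref{thm: realisation}, applied to the cyclic group $\langle f\rangle$ with basepoint $v$) the element $\Phi$ lifts to an order $p$ element of $\Aut(F_n)$, contradiction; so $f$ fixes no vertex. If $f$ fixes an edge $e$ (setwise), then since $p$ is odd it fixes $e$ pointwise, hence fixes its endpoints, again a contradiction. So $f$ acts freely on $\Gamma_0$, giving~(1). For~(2): the quotient graph $\Gamma_0/\langle f\rangle$ is connected with fundamental group $F_n$ for $n = p+1$; if it had more than one vertex, collapse a maximal tree in it and lift this to an equivariant collapse of a maximal forest in $\Gamma_0$ (an elementary equivariant collapse in the sense of Krstic--Vogtmann), landing in the equivariant spine. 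The result is a graph on which $f$ still acts freely and whose quotient has a single vertex. This does not change the conjugacy class of $\Phi$ and keeps us in the spine.

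Next comes the core combinatorial step, properties~(3) and~(4), which I expect to be the main obstacle. After~(1) and~(2), the quotient $\Gamma/\langle f\rangle$ is a rose with $n = p+1$ petals, so $\Gamma$ has $p$ vertices and $p(p+1)$ edges. An edge $e$ of $\Gamma$ with $\iota(e)$ and $\tau(e)$ distinct vertices projects to a loop; since $\langle f\rangle$ is cyclic of prime order acting freely and transitively on the vertex set, the orbit of $e$ traces out a union of cycles permuted by $f$, and a counting/connectivity argument (the orbit graph of non-loop edges must connect the $p$ vertices) forces exactly one orbit of ``edge-type'' edges forming a single $p$-cycle through all the vertices; orienting it consistently gives $f(\iota(e)) = \tau(e)$, which is~(3). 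Every remaining edge has $\iota(e') = \tau(e')$ and so is a loop, giving~(4). The only subtlety is ensuring, via equivariant expansions if necessary, that no ``edge-type'' orbit other than a single spanning $p$-cycle survives; here one uses that any additional $p$-cycle of non-loop edges would give a separating or reducible configuration that can be collapsed equivariantly, and if no such collapse is available one again produces a fixed point and hence an $\Aut$-lift, the standing contradiction. Assembling the four properties for the final graph $\Gamma$ and automorphism $f$ completes the proof. \qed
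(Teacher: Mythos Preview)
Your argument for properties~(1) and~(2) is fine and matches the paper's. The gap is in (3) and (4).

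First, a numerical slip: after (1) and (2) the quotient $\Gamma/\langle f\rangle$ is a rose with $m=1+(n-1)/p$ petals, not $n$ petals (and the proposition is for general $n$, not only $n=p+1$; even there the quotient has $2$ petals and $\Gamma$ has $2p$ edges, not $p(p+1)$). More seriously, the ``counting/connectivity argument'' does not do what you want. With one vertex orbit, each non-loop edge orbit does form a single $p$-cycle (since $p$ is prime), but nothing forces there to be exactly one such orbit, nor that the surviving one has step~$1$ (i.e.\ $\tau(e)=f(\iota(e))$ rather than $f^k(\iota(e))$ for some $k\neq 1$). Connectivity only guarantees at least one non-loop orbit. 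Your fallback, that an extra $p$-cycle ``can be collapsed equivariantly'', is false---a $p$-cycle is not a forest---and there is no mechanism by which two non-loop orbits produce a fixed vertex and hence an $\Aut$-lift.

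What the paper uses here is an \emph{equivariant slide} (a Whitehead move): if $s$ and $t$ lie in distinct edge orbits and form a length-$2$ path, one can expand and then collapse so that $\tau(s)$ moves from $\iota(t)$ to $\tau(t)$. With this move in hand, the paper first manufactures an edge from $v$ to $f(v)$: take a reduced path from $v$ to $f(v)$; if two consecutive edges lie in different orbits, slide to shorten; if all lie in one orbit, concatenating $f$-translates shows that orbit already forms the desired $p$-cycle. Then every remaining edge is slid repeatedly along this distinguished $p$-cycle until both its endpoints coincide, turning it into a loop. This slide is the missing ingredient in your sketch; without it there is no way to pass from ``several non-loop orbits'' to ``one $p$-cycle plus loops''.
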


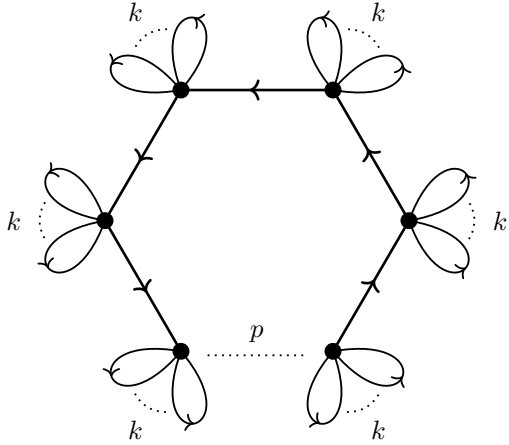
\begin{figure}[h!]
\begin{tikzpicture}[line width=0.7pt]
	\foreach \x in {-60, 0,60,120,180,240}{
		\draw[middlearrow={.5}{-20}{>}{}{}] (\x:2) to[out=\x-70, in=\x-10,loop,distance=1.5cm] (\x:2);
		\draw[middlearrow={.5}{-20}{>}{}{}] (\x:2) to[out=\x+10, in=\x+70,loop,distance=1.5cm] (\x:2);
		\draw[dotted, bend right]  (\x-5:2.8) to (\x+5:2.8);
		\node[inner sep=2,circle,draw,fill] at (\x:2){};
		\node at (\x:3.2) {$k$};}
	\foreach \x in {-60, 0,60,120,180}{
		\draw[line width = 1pt] (\x:2) edge[middlearrow={0.55}{0}{>}{}{}] (\x+60:2);}
	\draw[dotted] (250:1.9) to (290:1.9); \node at (270:1.5) {$p$};
\end{tikzpicture}
\caption{The outer automorphisms considered in Proposition~\ref{prop:nice_representative} are all realised by a graph automorphism rotating this $\Gamma$ by $2\pi/p$.}
\end{figure}

A key step is the existence of the following ``equivariant slide'' move in the situations we are considering.

\begin{lem}
	\label{lem:equivariant_slide}
	Suppose $f$ is an automorphism of a graph $\Gamma$ so that the action is free, with one orbit of vertices, and that edges $s$ and $t$ are in distinct orbits and form a (coherently oriented) path of length 2, meeting at $v=\tau(s)=\iota(t)$. Then there is an equivariant Whitehead move \emph{sliding} $s$ along $t$.
\end{lem}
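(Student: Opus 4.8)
The plan is to construct the equivariant Whitehead move explicitly and then verify that it is indeed realised by an equivariant collapse-expansion pair (an ``equivariant Whitehead move'' in the sense of Krstic--Vogtmann). Recall that a Whitehead move sliding $s$ along $t$ replaces the edge $s$ (with endpoints $\iota(s)$ and $v = \tau(s) = \iota(t)$) by a new edge $s'$ with $\iota(s') = \iota(s)$ and $\tau(s') = \tau(t)$; equivalently, in the rose picture obtained after collapsing all other edges, we fold: the half-edge at $v$ corresponding to $s$ gets dragged across $t$. On the level of the free group, if $t$ is the generator $\tau$ and $s$ the generator $\sigma$, this is the Nielsen transformation $\sigma \mapsto \sigma\tau$ (or $\tau^{-1}\sigma$, depending on orientation conventions), fixing all other generators. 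I would first fix these conventions precisely, matching the left-action conventions (1)--(3) established above.

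\textbf{Step 1: set up the move orbit-by-orbit.} Since $f$ acts freely with one orbit of vertices, the vertices are $v_0, \dots, v_{p-1}$ with $f(v_i) = v_{i+1}$ (indices mod $p$), and the orbit of $t$ consists of edges $t_i$ with, say, $\iota(t_i) = v_i$, $\tau(t_i) = v_{?}$ — here I must be careful: $t$ need not satisfy $f(\iota(t)) = \tau(t)$, it is merely some edge in a distinct orbit from $s$ forming a length-2 path with $s$. Write $s_i = f^i(s)$, $t_i = f^i(t)$. The hypothesis gives $\tau(s) = \iota(t) = v$, say $v = v_0$; applying $f^i$ gives $\tau(s_i) = \iota(t_i) = v_i$. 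The slide of $s$ along $t$ replaces $s = s_0$ by $s_0'$ with $\iota(s_0') = \iota(s_0)$, $\tau(s_0') = \tau(t_0)$. To make this equivariant I must perform it simultaneously on the whole orbit: replace each $s_i$ by $s_i'$ with $\iota(s_i') = \iota(s_i)$ and $\tau(s_i') = \tau(t_i)$, and set $f(s_i') = s_{i+1}'$. Because $s$ and $t$ lie in \emph{distinct} orbits, no edge is asked to slide along a translate of itself, so this assignment is consistent; this disjointness is exactly where the hypothesis is used.

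\textbf{Step 2: realise it as an equivariant collapse/expansion.} Following Krstic--Vogtmann, an equivariant Whitehead move is by definition a composite of an equivariant expansion (blow up a vertex orbit, introducing a new edge orbit) followed by an equivariant collapse (collapse an edge orbit, a forest orbit). Concretely: blow up each vertex $v_i$ into two vertices $v_i$ and $w_i$ joined by a new edge $\epsilon_i$, with $f(\epsilon_i) = \epsilon_{i+1}$ — this is equivariant since we do it on the whole orbit — distributing the half-edges at $v_i$ so that the half-edge of $s_i$ at $v_i$ stays at $v_i$ while the half-edge of $t_i$ at $v_i$, together with $\epsilon_i$, sits so that collapsing the orbit of $t_i$ (or rather: re-choosing which orbit to collapse) yields the slid graph. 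I would present this as the standard two-step picture and note that since the blow-up and both collapses are performed $f$-equivariantly on full orbits (and the collapsed orbits are forests — each is a single edge per vertex, hence a disjoint union of segments, which is a forest), the result is a genuine point of the equivariant spine and the move is an edge-path of length $\le 2$ there. One then checks the new graph $\Gamma'$ still carries a free $\langle f\rangle$-action with one vertex orbit.

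\textbf{The main obstacle} I anticipate is purely bookkeeping with orientation/basepoint conventions: getting the direction of the slide ($s \mapsto st$ versus $ts$ versus with an inverse) to be consistent with the left-action conventions (1)--(3), and confirming that the collapsed edge-orbit in the expansion step really is a forest orbit (so that the move lands in the spine rather than merely in outer space). There is also a small subtlety to address: after the slide, is the new graph still \emph{reduced} (no separating edge orbits, appropriate to the reduced equivariant spine)? I would either check this directly in our situation — all non-$t$ edges incident to the relevant vertices are loops or the $p$-cycle, so no separating edge is created — or remark that we only need the move inside the (possibly non-reduced) equivariant spine, appealing to Lyman's isometry \cite{LymanIsometries} as in the surrounding text. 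I do not expect to need any delicate geometry; the content is entirely that ``distinct orbits'' lets the naive slide be performed equivariantly, and that Krstic--Vogtmann's expansion/collapse formalism then certifies it as a legal move.
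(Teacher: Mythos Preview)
Your approach is essentially the paper's: an equivariant expansion at the vertex orbit followed by collapsing the $t$-orbit. One small correction in your Step~2: the half-edge distribution you sketch is backwards. If the terminal half-edge of $s_i$ stays at the old vertex $v_i$ while $t_i$'s initial half-edge moves to $w_i$, then collapsing the $t$-orbit leaves $\tau(s_i)$ unchanged and no slide occurs. The paper's (and the correct) choice is to send \emph{both} the terminal half-edge of $s_i$ and the initial half-edge of $t_i$ to the new vertex $w_i$, and all other half-edges at $v_i$ to the old vertex; then collapsing the $t$-orbit identifies $w_i$ with $\tau(t_i)$, dragging $\tau(s_i)$ along, and the new edge $\epsilon_i$ plays the role of the old $t_i$. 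With this fix your argument goes through, and your forest-orbit check is straightforward (the $t_i$ become $p$ pairwise disjoint edges after the blow-up). Your worry about reducedness is a red herring: the lemma only asserts the move exists, and reducedness is handled separately in the surrounding argument.
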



That is, the effect of the move is that -- after relabelling -- the graph and action are unchanged except now $\tau(s)=\tau(t)$ (and of course this behaviour is duplicated across the orbit).

\begin{proof}
	First do an equivariant expansion at $v$, introducing an edge labelled $t'$ and choosing to attach $t$ coherent with $t'$, $s$ incoherent with $t'$ and all other edges at $v$ to the initial vertex of $t'$.
	
	Now collapse $t$: after relabelling, $t'$ joins the vertices joined by $t$ in the original graph, but $s$ joins $\iota(s)$ to $\tau(t')$ rather than $\iota(t)$ as originally. Figure~\ref{fig:equivariant_slide} shows the steps; note however that there is no requirement for any of the vertices to be distinct.
\end{proof}

\begin{figure}
	\begin{subfigure}[t]{.4\linewidth}
		\begin{tikzpicture}
		\node[circle, inner sep = 2pt, draw=black, fill](v1) at (0,0) {}; 
		\node[circle, inner sep = 2pt, draw=black, fill](v2) at (2,0) {}; 
		\node[circle, inner sep = 2pt, draw=black, fill](v3) at (4,0) {}; 
		\draw[middlearrow={.55}{0}{>}{$s$}{above}] (v1) to (v2);
		\draw[middlearrow={.55}{0}{>>}{$t$}{above}] (v2) to (v3);
		\foreach \v in {v1,v2,v3}{
			\draw (\v) to ($(\v)+(120:0.8)$);
			\draw (\v) to ($(\v)+(100:0.8)$);
			\draw[dotted] ($(\v)+(90:0.6)$) to ($(\v)+(65:0.6)$);
			\draw (\v) to ($(\v)+(60:0.8)$);}
		\draw[middlearrow={.55}{0}{>}{}{}] ($(v1)+(245:0.8)$) to (v1);
		\draw[middlearrow={.55}{0}{>}{}{}] (v2) to ($(v2)+(315:0.8)$);
		\draw[middlearrow={.55}{0}{>}{}{}] ($(v3)+(225:0.8)$) to (v3);
		\draw[middlearrow={.55}{0}{>}{}{}] (v3) to ($(v3)+(335:0.8)$);
		
		\draw[middlearrow={.55}{0}{>>}{}{}] ($(v1)+(195:0.8)$) to (v1);		
		\draw[middlearrow={.55}{0}{>>}{}{}] (v1) to ($(v1)+(315:0.8)$);
		\draw[middlearrow={.55}{0}{>>}{}{}] ($(v2)+(225:0.8)$) to (v2);
		\draw[middlearrow={.55}{0}{>>}{}{}] (v3) to ($(v3)+(295:0.8)$);
		
	\end{tikzpicture}
		\caption{Initial state}
	\end{subfigure}
    
	\begin{subfigure}[t]{.4\linewidth}
		\begin{tikzpicture}
		\node[circle, inner sep = 2pt, draw=black, fill](v1p) at (-0.5,0) {}; 
		\node[circle, inner sep = 2pt, draw=black, fill](v1) at (0.2,0) {}; 
		\node[circle, inner sep = 2pt, draw=black, fill](v2p) at (2,0) {}; 
		\node[circle, inner sep = 2pt, draw=black, fill](v2) at (2.7,0) {}; 
		\node[circle, inner sep = 2pt, draw=black, fill](v3p) at (4,0) {}; 
		\node[circle, inner sep = 2pt, draw=black, fill](v3) at (4.7,0) {}; 
		\draw[middlearrow={.7}{0}{>>>}{}{}] (v1p) to (v1);
		\draw[middlearrow={.7}{0}{>>>}{$t'$}{above}] (v2p) to (v2);
		\draw[middlearrow={.7}{0}{>>>}{}{}] (v3p) to (v3);
		\draw[middlearrow={.55}{0}{>>}{$t$}{above}] (v2) to (v3p);
		\draw[bend right, middlearrow={.55}{0}{>}{$s$}{above}] (v1p) to (v2);
		\foreach \v in {v1p,v2p,v3p}{
			\draw (\v) to ($(\v)+(120:0.8)$);
			\draw (\v) to ($(\v)+(100:0.8)$);
			\draw[dotted] ($(\v)+(90:0.6)$) to ($(\v)+(65:0.6)$);
			\draw (\v) to ($(\v)+(60:0.8)$);}
		
		\draw[middlearrow={.55}{0}{>}{}{}] ($(v1)+(45:0.8)$) to (v1);
		\draw[middlearrow={.55}{0}{>}{}{}] (v2p) to ($(v2p)+(150:0.8)$);
		\draw[middlearrow={.55}{0}{>}{}{}] ($(v3)+(255:0.8)$) to (v3);
		\draw[middlearrow={.55}{0}{>}{}{}] (v3p) to ($(v3p)+(285:0.8)$);
		
		\draw[middlearrow={.55}{0}{>>}{}{}] ($(v1p)+(195:0.8)$) to (v1p);		
		\draw[middlearrow={.55}{0}{>>}{}{}] (v1) to ($(v1)+(65:0.8)$);
		\draw[middlearrow={.55}{0}{>>}{}{}] ($(v2p)+(170:0.8)$) to (v2p);
		\draw[middlearrow={.55}{0}{>>}{}{}] (v3) to ($(v3)+(335:0.8)$);
	\end{tikzpicture}
		\caption{After the expansion}
	\end{subfigure}

	\begin{subfigure}[b]{.4\linewidth}
		\begin{tikzpicture}
		\node[circle, inner sep = 2pt, draw=black, fill](v1p) at (0,0) {}; 
		\node[circle, inner sep = 2pt, draw=black, fill](v2p) at (2,0) {}; 
		\node[circle, inner sep = 2pt, draw=black, fill](v3p) at (4,0) {}; 
		\draw[middlearrow={.6}{0}{>>>}{$t'$}{above}] (v2p) to (v3p);
		\draw[bend right, middlearrow={.55}{0}{>}{$s$}{below}] (v1p) to (v3p);
		\foreach \v in {v1p,v2p,v3p}{
			\draw (\v) to ($(\v)+(120:0.8)$);
			\draw (\v) to ($(\v)+(100:0.8)$);
			\draw[dotted] ($(\v)+(90:0.6)$) to ($(\v)+(65:0.6)$);
			\draw (\v) to ($(\v)+(60:0.8)$);}
		
		\draw[middlearrow={.55}{0}{>}{}{}] ($(v1p)+(215:0.8)$) to (v1p);
		\draw[middlearrow={.55}{0}{>}{}{}] (v2p) to ($(v2p)+(150:0.8)$);
		\draw[middlearrow={.55}{0}{>}{}{}] ($(v2p)+(190:0.8)$) to (v2p);
		\draw[middlearrow={.55}{0}{>}{}{}] (v3p) to ($(v3p)+(285:0.8)$);
		
		\draw[middlearrow={.7}{0}{>>>}{}{}] (v1p) to ($(v1p)+(0:0.8)$);
		\draw[middlearrow={.7}{0}{>>>}{}{}] ($(v1p)+(195:0.8)$) to (v1p);
		\draw[middlearrow={.7}{0}{>>>}{}{}] (v3p) to ($(v3p)+(0:0.8)$);
		\draw[middlearrow={.7}{0}{>>>}{}{}] ($(v2p)+(170:0.8)$) to (v2p);
		
	\end{tikzpicture}
		\caption{After the collapse}
	\end{subfigure}
	\caption{Schematic for the Whitehead move described in Lemma~\ref{lem:equivariant_slide}.}
	\label{fig:equivariant_slide}
\end{figure}
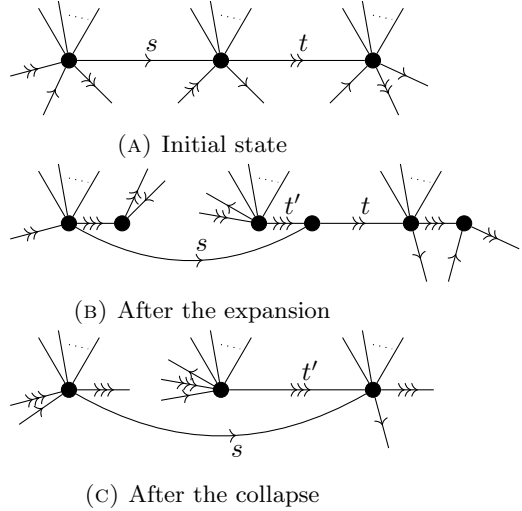


\begin{proof}[Proof of Propsition~\ref{prop:nice_representative}]
	By Theorem \ref{thm: realisation}, the element $\Phi$ is induced by an isometry of a finite (marked) graph $\Gamma$. Denote this isometry by $f$: we will gradually alter $f$ and $\Gamma$ until they satisfy all the required properties. 
    
    If a graph automorphism inducing $\Phi$ fixes a vertex, then it gives rise to a finite order element of $\Aut(F_n)$. So any graph automorphism inducing an element satisfying the hypotheses cannot have a fixed point. (If $p=2$, one might be concerned that the fixed point was the midpoint of an edge; this is equally disallowed by the assumption that there cannot be a fixed base point.) Since cyclic groups of prime order have no proper subgroups, all stabilisers are trivial, and the action is free. In particular, $\Gamma$ must have $l$ orbits each of $p$ vertices. Suppose $l$ is at least 2. Then in order for $\Gamma$ to be connected there is an edge $e$ joining two vertices in different orbits. The orbit of $e$ consists of $p$ edges with disjoint adjacent vertices, so is an equivariant forest, and we can collapse it. Repeating this, we end up with a graph $\Gamma$ supporting an automorphism realising $\Phi$ with only a single orbit of $p$ vertices.
	
	We now describe a series of equivariant slides which will arrange for the edges to satisfy the final two claims. Picking any vertex $v$, we first arrange for an edge joining $v$ to $f(v)$. Since $\Gamma$ is connected, there is a reduced path $\gamma=e_1e_2 \dots e_n$ joining these two vertices. If it has length 1 we are done; so we will describe a procedure to reduce its length. If any $e_i$ and $e_{i+1}$ in this path are in different orbits then we may apply the equivariant slide of Lemma~\ref{lem:equivariant_slide} to slide $e_i$ across $e_{i+1}$, resulting in a graph $\Gamma'$, again with one orbit of vertices and a free action of $f$, but now with a shorter path $\gamma' = e_1 \dots e'_{i} e_{i+2} \dots e_n$ joining $v$ to $f(v)$. (The reduction might occur in more than one place simultaneously, in which case the length of the new path would be less than $n-1$.)
	
	So we can assume that $\gamma$ is composed of edges which are all in the same orbit. The concatenation $\gamma f(\gamma) \dots f^m(\gamma)$  will again be composed of edges in the same orbit and joins $v$ to $f^{m+1}(v)$; running $m=0$ up to $m=p-1$ implies that there are paths composed of edges in the same orbit joining $v$ to every other vertex. Since the edge orbits are also of cardinality $p$, this can only happen if this orbit forms a $p$-cycle, visiting (in some order) each of $v, f(v), \dots f^{p-1}(v)$.
	
	Let $e$ be the edge in this cycle originating at $v$. Now take any other edge $d$ at $v$, and observe that its endpoint lies somewhere on the cycle consisting of the edges $e,f(e),f^2(e), \dots, f^{p-1}(e)$ (in some order), since this visits all vertices exactly once. Repeatedly applying Lemma~\ref{lem:equivariant_slide} to slide $d$ over edges in the orbit of $e$ will at some stage (at most $p-1$ slides are required, or $(p-1)/2$ if one picks the opposite orientation on the orbit of $e$) produce a $d'$ joining $v$ to $f(v)$, as we required.
	
	Fixing an orbit of edges $f^i(e)$ satisfying Property (3) of the statement, we may slide any other orbit of edges along this cycle until $\tau(e)=\iota(e)$, eventually arranging for all other orbits to be loops, as claimed by Property (4). 
\end{proof}

We now deduce certain algebraic properties of $\Phi$, its centraliser $C\langle \Phi \rangle $ and its normaliser $N\langle \Phi \rangle$ from the realisation given by Proposition~\ref{prop:nice_representative}.

\begin{cor}\label{cor: strange order p element}
	Suppose $\Phi$ is an element of $\Out(F_n)$ of order $p$ a prime, and is not represented by an element of $\Aut(F_n)$ of order $p$. Then: \begin{enumerate}
		\item $n \equiv 1 \mod p$;
		\item $\Out(F_n)$ contains one conjugacy class of elements fitting this description;
		\item The centraliser $C \langle\Phi \rangle$ maps with finite kernel onto a finite index subgroup of $\Out(F_m)$, with $m=1+(n-1)/p$;
		\item The quotient $N\langle \Phi \rangle / C\langle \Phi \rangle$ is isomorphic to $\Z/(p-1)$.
	\end{enumerate}
\end{cor}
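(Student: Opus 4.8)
\noindent\emph{Proof plan.}
The common input for all four parts is the model of $\Phi$ from Proposition~\ref{prop:nice_representative}, so I would first read off its consequences: a finite graph $\Gamma$ with a free order-$p$ automorphism $f$ realising $\Phi$, with one orbit of vertices (so $p$ of them), one orbit of edges forming a coherently oriented $p$-cycle, and all remaining edges loops. Since $f$ acts freely on edges, the loops come in orbits of size $p$; writing $k$ for their number, $\Gamma$ has $p$ vertices and $p(k+1)$ edges, so $n=\mathrm{rk}\,\pi_1\Gamma=p(k+1)-p+1=pk+1$ — this is Part~(1) — and $m=1+(n-1)/p=k+1$. The quotient $\bar\Gamma:=\Gamma/\langle f\rangle$ is the rose with $m$ petals (the $p$-cycle and each loop orbit descend to a single loop), so $\Gamma\to\bar\Gamma$ is a regular $\Z/p$-cover; writing $F_n=\pi_1\Gamma\triangleleft F_m:=\pi_1\bar\Gamma$, with quotient $\Z/p$ classified by $\pi\colon F_m\twoheadrightarrow\Z/p$, the deck action gives $\delta\colon F_m/F_n\to\Out(F_n)$, $\bar g\mapsto[\mathrm{conj}_g|_{F_n}]$. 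This is injective because the finite-index subgroup $F_n$ has trivial centraliser in the free group $F_m$, and unravelling the construction $\Phi=\delta(1)$.

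For Part~(2), suppose $\Phi_1,\Phi_2\in\Out(F_n)$ both satisfy the hypotheses. By Proposition~\ref{prop:nice_representative} each $\Phi_i$ is realised by a rotation $f_i$ of a graph $\Gamma_i$ of the shape above, and since the rank $n$ determines $k=(n-1)/p$ the abstract graphs $\Gamma_1,\Gamma_2$ are isomorphic. Any two such rotations agree on the $p$-cycle (each shifts it by one step, by Property~(3)) and differ only in the permutations they induce on the loops at each vertex, so a short inductive argument produces a graph automorphism conjugating $f_1$ to $f_2$. Thus $\Phi_1$ and $\Phi_2$ are induced by the same automorphism of the same unmarked graph, hence are conjugate in $\Out(F_n)$ by the conjugacy criterion of Krstic--Lustig--Vogtmann \cite{KrsticLustigVogtmann}. (At least one such class exists whenever $n\equiv1\bmod p$ and $n\ge p+1$: the standard rotation of $\Gamma$ gives $\Phi=\delta(1)$, of order $p$, with no order-$p$ lift to $\Aut(F_n)$ — such a lift would present a semidirect product $F_n\rtimes\Z/p$ inducing the outer action $\delta$, but this extension is unique up to isomorphism because $H^2(\Z/p;Z(F_n))=0$, contradicting that $F_m$ is torsion-free.)

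For Part~(3), I would let $N\langle\Phi\rangle$ act on the reduced equivariant spine $L_\Phi$ of $\CV_n$ — the $\langle\Phi\rangle$-fixed subcomplex of the spine — simplicially and with finite stabilisers. A vertex of $L_\Phi$ is an $f$-equivariant marked graph on $\Gamma$, which descends along $\Gamma\to\bar\Gamma$ to a marked graph on $\bar\Gamma$, i.e.\ a vertex of the spine of $\CV_m$; by Lyman's equivariant isometry \cite{LymanIsometries} this identifies $L_\Phi$ with the spine of $\CV_m$, equivariantly for a homomorphism $\rho\colon C\langle\Phi\rangle\to\Out(F_m)$ (on markings: $\rho(\phi)$ records how $\phi$ permutes the markings of $\bar\Gamma$, an $\Out(F_m)$-torsor). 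Then $\ker\rho=\langle\Phi\rangle\cong\Z/p$: if $\rho(\phi)=1$ then $\phi$ fixes every marking of $\bar\Gamma$, so its action on markings of $\Gamma$ differs from the identity only by a deck transformation, forcing $\phi\in\langle\Phi\rangle$; conversely $\Phi=\delta(1)$ is a deck transformation and so lies in $\ker\rho$. For the image: since $\Out(F_n)$ has finitely many orbits of cells on the spine of $\CV_n$, all with finite stabilisers, the fixed subcomplex $L_\Phi$ has finitely many $N\langle\Phi\rangle$-orbits of cells; as $C\langle\Phi\rangle$ has finite index in $N\langle\Phi\rangle$ (the quotient embeds in $\Aut(\Z/p)$), this forces $\mathrm{im}(\rho)$ to act cocompactly on the spine of $\CV_m$, and since $\Out(F_m)$ acts properly and cocompactly on that spine, $\mathrm{im}(\rho)$ has finite index.

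For Part~(4), $\langle\Phi\rangle\cong\Z/p$ is normal in $N\langle\Phi\rangle$ with centraliser $C\langle\Phi\rangle$, so conjugation embeds $N\langle\Phi\rangle/C\langle\Phi\rangle\hookrightarrow\Aut(\Z/p)\cong\Z/(p-1)$; it remains to realise every $j\in(\Z/p)^{\times}$. Using that the composite $\Aut(F_m)\twoheadrightarrow\GL_m(\Z)\twoheadrightarrow\GL_m(\Z/p)$ is onto (and $m\ge2$) and that $\GL_m(\Z/p)$ acts transitively on $(\Z/p)^m\setminus\{0\}$, pick $\Psi\in\Aut(F_m)$ with $\pi\circ\Psi=j\pi$. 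Then $\Psi(F_n)=F_n$ and $\Psi$ induces multiplication by $j$ on $F_m/F_n$, so $\psi:=\Psi|_{F_n}\in\Aut(F_n)$ satisfies $[\psi]\,\delta(i)\,[\psi]^{-1}=\delta(ji)$ for all $i$, in particular $[\psi]\Phi[\psi]^{-1}=\Phi^j$; hence $[\psi]\in N\langle\Phi\rangle$ realises multiplication by $j$. The main obstacle is Part~(3): the algebra surrounding $\rho$ (its kernel, and the conjugation sequence of Part~(4)) is routine once one has the two geometric facts — Lyman's equivariant identification of $L_\Phi$ with the spine of $\CV_m$, and the way the cocompact proper $\Out(F_n)$-action on the spine of $\CV_n$ restricts to $L_\Phi$ — and it is in pinning those down (and in the careful bookkeeping of the cover $\Gamma\to\bar\Gamma$ implicit in Proposition~\ref{prop:nice_representative}) that the real work lies.
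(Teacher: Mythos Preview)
Your arguments for Parts~(1) and~(2) match the paper's essentially verbatim; the existence remark you add (via $H^2(\Z/p;Z(F_n))=0$) is a nice touch the paper omits.

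For Part~(3) you take a genuinely different route. You propose to build $\rho$ geometrically, using Lyman's identification of the equivariant spine $L_\Phi$ with the spine of $\CV_m$ and reading off the map from how $C\langle\Phi\rangle$ permutes markings. The paper instead works algebraically: it identifies the preimage $\widehat{\langle\Phi\rangle}\leq\Aut(F_n)$ with $F_m$ (by observing it acts freely on the universal cover of $\Gamma$ and has quotient $\Gamma/\langle f\rangle$), and then invokes \cite[Proposition~5.4.2]{Andrew2022} to identify the preimage of $C\langle\Phi\rangle$ in $\Aut(F_n)$ with the finite-index subgroup of $\Aut(F_m)$ preserving $F_n$ and acting trivially on $F_m/F_n$. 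Passing to outer automorphism groups gives $\rho$ with kernel $\langle\Phi\rangle$ immediately. Your approach is viable, but the step you flag as delicate --- that the $C\langle\Phi\rangle$-action on $L_\Phi$ really does factor through $\Out(F_m)$ via Lyman's isometry --- is precisely what the algebraic argument sidesteps. The paper does use Lyman, but only later (in Proposition~\ref{prop:action}) once $\rho$ is already in hand.

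For Part~(4) the paper's argument is considerably shorter than yours: each non-trivial power $\Phi^j$ still has order $p$ and still fails to lift to an order-$p$ element of $\Aut(F_n)$, so by Part~(2) it is conjugate to $\Phi$. The conjugating elements land in $N\langle\Phi\rangle$ and realise every automorphism of $\langle\Phi\rangle$, so $N\langle\Phi\rangle/C\langle\Phi\rangle\cong\Aut(\Z/p)\cong\Z/(p-1)$. Your explicit construction via $\Aut(F_m)\twoheadrightarrow\GL_m(\Z/p)$ is correct, but the paper gets surjectivity for free from the uniqueness statement you have already proved.
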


Claims (1) and (2) were independently obtained as part of \cite[Proposition 7.1]{BridsonPiwek}, in the context of studying the free-by-cyclic groups defined by such an automorphism.

\begin{proof}
	Let $\Gamma$ be the graph and $f$ the automorphism realising $\Phi$ given by Proposition~\ref{prop:nice_representative}. Then:
	\begin{enumerate}
		\item The rank of $\Gamma$ is $p(k+1)-p+1$, where $k$ is the number of loops at each vertex.
		\item Since by Proposition~\ref{prop:nice_representative} any outer automorphism with the given properties can be represented on the same graph with the same action, \cite[Theorem 2.4]{KrsticLustigVogtmann} implies that they are in the same conjugacy class.
		\item Consider the preimage $\widehat{\langle \Phi \rangle}$ in $\Aut(F_n)$ of the subgroup generated by $\Phi$. This is a virtually free group without any assumption on $\Phi$ beyond finite order, but in the set up here it is actually free. To see this, we use the fact that it acts on the universal covering tree $T$ of $\Gamma$ (this is a consequence of the proofs of Theorem~\ref{thm: realisation}, which begin by observing that $\widehat{\langle \Phi \rangle}$ is virtually free and hence acts on a tree by \cite{KPSvirtfree}; interpreting the alterations to $\Gamma$ and $f$ as equivariant alterations to this tree imply the claim), and this action is free since this is true both of the action of $\Inn(F_n) \cong F_n$ on $T$ and of the induced action of $\widehat{\langle \Phi \rangle}/\Inn(F_n) \cong \langle \Phi \rangle$ on $T/\Inn(F_n) \cong \Gamma$.
		
		This means that $\widehat{\langle \Phi \rangle}$ is free. Its rank is the rank of $\Gamma/\langle \Phi \rangle$, which is $m= k+1$; since $n= pk+1$ this is as in the statement. We have the following short exact sequence		
		\[ 1 \to F_n \to F_m \cong \widehat{\langle \Phi \rangle} \to \Z/p \to 1.\]
		
		It follows from \cite[Proposition 5.4.2]{Andrew2022} that the (finite index) subgroup of $\Aut(F_m)$ preserving the finite index subgroup $F_n$ and inducing the trivial automorphism on the finite quotient is isomorphic to the subgroup of $\Aut(F_n)$ commuting with $\Phi \in \Out(F_n)$ up to inner automorphisms, which is the preimage of $C \langle \Phi \rangle $. Passing this isomorphism to the quotients, we see that $C \langle \Phi \rangle $ must map with finite kernel (the subgroup $\langle \Phi \rangle$) to a finite index subgroup of $\Out(F_m)$.
		\item Notice that since $\Phi$ is of order $p$ and not represented by a finite order automorphism, the same is true of all its non-trivial powers. By part (2), this means that $\Phi$ is conjugate to each $\Phi^k$ with $k \in \{1 \dots p-1\}$. These conjugating elements are contained in the normaliser of $\langle \Phi \rangle$, and so we see that this normaliser induces the whole of $\Aut(\langle \Phi \rangle)$, which is isomorphic to $\Z/(p-1)$. The kernel of the map $N\langle \Phi \rangle \to \Aut(\langle \Phi \rangle)$ is exactly the centraliser of $\Phi$. \qedhere
	\end{enumerate}
\end{proof}

If $N$ is a normal subgroup of $F_n$, then it is $\Inn(F_n)$-invariant, and we so can consider the $\Out(F_n)$ action on the set of normal, index $p$ subgroups of $F_n$. If, as in this case, the quotient is abelian, then $g^{-1}tgN = g^{-1}N tN gN = tN$, so the inner automorphisms also preserve all of the cosets of these subgroups, and there is an induced $\Out(F_n)$ action on these cosets. In the next lemma we consider a stabiliser of some normal subgroup $N$ and its cosets, and begin to understand its action on Culler--Vogtmann space.

\begin{lem}
	\label{lem:counting_orbits}
	Suppose $x$ is a point in $CV_n$, where $n \geq 2$, and $A$ is the finite index subgroup of $\Out(F_n)$ preserving some normal subgroup $N$ of $F_n$, which has prime index $p$, as well as its cosets. Then the following sets have the same finite cardinality: \begin{itemize}
		\item the $A$-orbits of points in the $\Out(F_n)$-orbit of $x$;
		\item the $\Out(F_n)_x$-orbits of pairs $(N, tN)$ where $N$ is an index $p$ normal subgroup of $F_n$ and $tN \neq N$;
		\item the $\Out(F_n)_x$-orbits of non-trivial maps $F_n \to \Z/p$.
	\end{itemize}
\end{lem}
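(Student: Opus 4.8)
The plan is to observe that all three sets are canonically in bijection with the double coset space $\Out(F_n)_x \backslash \Out(F_n) / A$ (up to the harmless left--right swap), and then read off finiteness from the hypothesis that $A$ has finite index. Throughout I keep the paper's convention that $\Out(F_n)$ acts on the right; correspondingly I use the right action $\phi\cdot\alpha = \phi\circ\alpha$ on homomorphisms and $(N,tN)\cdot\alpha = (\alpha^{-1}(N),\alpha^{-1}(tN))$ on pairs, both of which descend from $\Aut(F_n)$ to $\Out(F_n)$ because $\Z/p$ is abelian (as already noted before the lemma).

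First I would identify the last two sets. Sending a non-trivial homomorphism $\phi\colon F_n\to\Z/p$ to the pair $(\ker\phi,\,\phi^{-1}(1))$ is a bijection onto the set of pairs $(N,tN)$ with $N\trianglelefteq F_n$ of index $p$ and $tN\neq N$: here one uses that a group of order $p$ is cyclic, so every index-$p$ normal subgroup has cyclic quotient and every non-trivial coset is a generator, and that $\phi$ is determined by $\ker\phi$ together with the coset it sends to $1$. This bijection intertwines the two right $\Out(F_n)$-actions, so it restricts to a bijection between the corresponding orbit sets; in particular these two sets have equal cardinality, and since there are $p^n$ homomorphisms $F_n\to\Z/p$ in all, the set $S$ of non-trivial ones has $p^n-1$ elements, so both sets are finite.

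Next I would realise the first set as a double coset space. With $G=\Out(F_n)$ and $G_x$ the stabiliser of $x$, the $G$-orbit of $x$ is $G$-equivariantly isomorphic to $G_x\backslash G$, so the set of $A$-orbits in it is $G_x\backslash G/A$. On the other side, $G$ acts transitively on $S$: the action factors through the action of $\GL_n(\Z)$ on $\Hom(\Z^n,\Z/p)\cong(\Z/p)^n$ via $F_n\twoheadrightarrow\Z^n$ and $\Aut(F_n)\twoheadrightarrow\GL_n(\Z)$, and $\GL_n(\Z)\twoheadrightarrow\GL_n(\mathbb{F}_p)$ acts transitively on the non-zero vectors of $(\Z/p)^n$. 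By definition $A$ is the stabiliser in $G$ of one element $\phi_0$ of $S$ — fixing $\ker\phi_0$ and all of its cosets is the same as fixing $\phi_0$, since an automorphism of $\Z/p$ fixing a generator is trivial — so $S\cong A\backslash G$ as right $G$-sets and the set of $G_x$-orbits in $S$ is $A\backslash G/G_x$. Finally, inversion $g\mapsto g^{-1}$ gives a bijection $G_x\backslash G/A\to A\backslash G/G_x$, so all three cardinalities coincide; they are finite because $[G:A]=p^n-1<\infty$, whence $G_x\backslash G/A$ has at most $p^n-1$ elements.

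The only step that is not pure formalism is the transitivity of the $\Out(F_n)$-action on $S$ together with the identification of $A$ with the point stabiliser of $\phi_0$; the rest is bookkeeping with (double) cosets. The one thing to be careful about is keeping the handedness of the several actions consistent, so that the coset identifications above are literally correct rather than merely correct ``up to inversion''.
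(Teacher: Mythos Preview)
Your proof is correct and follows essentially the same double-coset strategy as the paper: identify the second and third sets via $\phi\mapsto(\ker\phi,\phi^{-1}(1))$, identify $A$ with a point stabiliser for the transitive $\Out(F_n)$-action on non-trivial maps, and conclude that both orbit counts equal $|G_x\backslash G/A|$. The only noteworthy difference is your transitivity argument, which factors through $\GL_n(\Z)\twoheadrightarrow\GL_n(\mathbb{F}_p)$ acting on non-zero vectors, whereas the paper phrases it as uniqueness of the Nielsen equivalence class of generating $n$-tuples of $\Z/p$ and cites \cite{DiaconisGraham1999}; your route is self-contained and arguably more direct for this particular target group.
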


\begin{proof}
	A non-trivial map from $F_n$ to $\Z/p$ is uniquely determined by its kernel and by identifying the pre-image of 1, which is to say the data $(N,tN)$. This gives the bijection between the second and third items. Since these actions are of a finite group on a finite set, the cardinality is finite. (There is a change of side of the action, but there is a correspondence between the orbits: precomposing a map $F_n \to \Z/p$ with $\varphi$ is equivalent to sending $(N,tN)$ to $(\varphi^{-1}(N),\varphi^{-1}(tN))$.) 
	
	Now we claim that, for the action of $\Out(F_n)$, there is only one such orbit. Through the lens of non-trivial maps $F_n \to \Z/p$, this is equivalent to the claim that there is a single \emph{Nielsen equivalence class} (orbit under the action of $\Aut(F_n)$) of generating sets of $\Z/p$ with cardinality $n \geq 2$. This follows from the Euclidean algorithm; a proof (in the more general setting of finite abelian groups) is given in \cite{DiaconisGraham1999}. 
    
    Since $N$ is normal and prime index, the quotient group $F_n/N$ is isomorphic to $\Z/p$ and an automorphism of this quotient is determined by the image of any non-trivial element. Hence if $\Phi$ preserves $N$, it preserves all the cosets of $N$ if and only if it preserves any non-trivial coset. So $A$ is exactly equal to the stabiliser of some (hence every) pair $(N,tN)$. 
    
	In general, if $G$ acts (on the left) on some set with $G_x$ the stabiliser of some element $x$, and $H$ is a subgroup of $G$, then the double cosets $HgG_x$ index the $H$-orbits that partition the $G$-orbit of $x$. If the $G$ action was on the right, then they are indexed by $G_x g H$ instead. We have two actions of $\Out(F_n)$ on different sides: the left action on conjugacy classes in $F_n$, inducing a transitive left action on the pairs $(N,tN)$, and a right action on $\CV_n$. Hence the double cosets $\Out(F_n)_x \Psi A$ index both the $A$-orbits of points in the $\Out(F_n)$-orbit of $x$, and the $\Out(F_n)_x$-orbits of pairs $(N, tN)$. So the cardinality of the first item agrees with the other two.
\end{proof}

We now specialise to the case of $n=p+1$, or equivalently $m=2$.

\begin{prop}
	\label{prop:action}
	Suppose $p$ is prime, $n=p+1$, and $\Phi$ is an order $p$ element of $\Out(F_n)$ that is not represented by an element of $\Aut(F_n)$ of order $p$. Then $C \langle \Phi \rangle $ acts on a tree with finite stabilisers, and if $p\geq 5$ the action has $\frac{1}{24}(p-1)(5p+29)$ orbits of vertices and $\frac{1}{4}(p-1)(p+3)$ orbits of edges. If $p=2$, then there are 3 orbits of vertices and 2 of edges, and if $p=3$ there are 4 orbits of vertices and 3 of edges.
\end{prop}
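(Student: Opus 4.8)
The plan is to apply Corollary~\ref{cor: strange order p element}(3) with $m=2$, realise $C\langle\Phi\rangle$ as acting on a tree built from $\Out(F_2)$, and then count orbits using Lemma~\ref{lem:counting_orbits} together with Burnside's lemma. By Corollary~\ref{cor: strange order p element}(3) and its proof (which invokes \cite[Proposition 5.4.2]{Andrew2022}), $C\langle\Phi\rangle$ surjects with finite kernel $\langle\Phi\rangle\cong\Z/p$ onto the finite index subgroup $A\leq\Out(F_2)$ consisting of the outer automorphisms that fix the index $p$ normal subgroup $N=F_{p+1}\trianglelefteq F_2$ together with all of its cosets. I would let $C\langle\Phi\rangle$ act, through this surjection, on the Bass--Serre tree $T$ of the splitting $\Out(F_2)\cong\GL_2(\Z)\cong D\ast_E D'$, where $D$ (order $8$) is the symmetry group of the rose $R_2$ (the signed $2\times2$ permutation matrices), $D'$ (order $12$) is the symmetry group of the theta graph, and $E=D\cap D'$ has order $4$; equivalently $T$ is the reduced spine of $\CV_2$, and by Lyman~\cite{LymanIsometries} this is the same $C\langle\Phi\rangle$-tree as the equivariant spine of $\Phi$ inside $\CV_{p+1}$. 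Since $T$ has a single orbit of rose vertices (stabiliser $D$), a single orbit of theta vertices (stabiliser $D'$) and a single orbit of edges (stabiliser $E$; there are no edge flips, as the two endpoints of an edge lie in distinct $\Out(F_2)$-orbits), and the $\Out(F_2)$-action on $T$ is proper, the $C\langle\Phi\rangle$-action on $T$ has finite stabilisers, and its orbits coincide with the $A$-orbits (as $\langle\Phi\rangle$ acts trivially). This proves the first assertion and reduces the orbit count to counting $A$-orbits of vertices and edges of $T$.

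For that I would apply Lemma~\ref{lem:counting_orbits} with its ``$n$'' taken to be $2$: choosing base points a marked rose, a marked theta and the midpoint of an edge, the lemma identifies the number of $A$-orbits of rose vertices, theta vertices and edges of $T$ with the number of orbits of $D$, $D'$ and $E$ respectively on the set $\Hom(F_2,\Z/p)\smallsetminus\{0\}$ of non-trivial homomorphisms. Identifying $\Hom(F_2,\Z/p)=(\Z/p)^2$ via abelianisation, this is the action induced by the reduction map $\GL_2(\Z)\to\GL_2(\Z/p)$, so I would finish with Burnside's lemma, computing $\tfrac1{|H|}\sum_{g\in H}\bigl(p^{\dim_{\Z/p}\Fix(g)}-1\bigr)$ for $H\in\{D,D',E\}$. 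When $p\geq5$ the three groups embed into $\GL_2(\Z/p)$ and the fixed-space dimensions are immediate: the identity fixes everything; $-I$, the order $4$ and order $6$ rotations, and the order $3$ rotations (characteristic polynomial $\lambda^2+\lambda+1$, nonzero at $\lambda=1$ unless $p=3$) fix only the zero vector; and every reflection (determinant $-1$, trace $0$, characteristic polynomial $\lambda^2-1$) has a one-dimensional fixed line. Burnside then yields
\[
\frac{(p-1)(p+5)}{8},\qquad \frac{(p-1)(p+7)}{12},\qquad \frac{(p-1)(p+3)}{4}
\]
orbits for $D$, $D'$, $E$; summing the first two gives the claimed $\tfrac1{24}(p-1)(5p+29)$ vertex orbits, and the third is the claimed edge count.

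For $p=2$ and $p=3$ the reduction $\GL_2(\Z)\to\GL_2(\Z/p)$ is not injective on these groups and the fixed-space analysis changes: modulo $2$ the sign matrices collapse, so $D$, $D'$, $E$ have images of orders $2$, $6$, $2$; modulo $3$ an order $3$ rotation becomes a non-trivial unipotent and acquires a one-dimensional fixed line. I would redo the Burnside count directly on the three (respectively eight) non-trivial vectors, obtaining $2+1=3$ vertex orbits and $2$ edge orbits when $p=2$, and $2+2=4$ vertex orbits and $3$ edge orbits when $p=3$. The step I expect to require the most care is not the Burnside bookkeeping but pinning down the input: verifying that the image of $C\langle\Phi\rangle$ in $\Out(F_2)$ is \emph{exactly} the coset stabiliser $A$ (so that Lemma~\ref{lem:counting_orbits} applies as stated), and correctly identifying $D$, $D'$ and $E$ inside $\GL_2(\Z)$ and their images modulo $p$.
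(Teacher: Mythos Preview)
Your proposal is correct and follows essentially the same route as the paper: identify $C\langle\Phi\rangle/\langle\Phi\rangle$ with the coset-stabiliser $A\leq\Out(F_2)$, let it act on the reduced spine of $\CV_2$ (the Bass--Serre tree for $D_4\ast_{D_2}D_6$), invoke Lemma~\ref{lem:counting_orbits} to convert the $A$-orbit count into stabiliser orbits on $\Hom(F_2,\Z/p)\smallsetminus\{0\}$, and finish with Burnside. The only difference is cosmetic: the paper tabulates fixed points element by element, while you compute fixed-space dimensions via characteristic polynomials; both yield the same formulas, and your caution about the small primes $p=2,3$ is exactly what the paper's tables handle by separate columns.
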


\begin{proof}
	We use the identification of $C \langle \Phi \rangle /\langle \Phi \rangle$ with the subgroup $A$ of $\Out(F_2)$ preserving an index $p$ normal subgroup as well as its cosets. The reduced spine of $\CV_2$ is a tree with a minimal action of $\Out(F_2)$. This action has two orbits of vertices, ``rose-type'' and ``theta-type'', and one orbit of edges, and the stabilisers are finite (we describe them below). Hence there is a minimal action of $C \langle \Phi \rangle $ via its quotient to $A$; the stabilisers for this action are also finite as extensions of finite groups by finite groups.
	
	By Lemma~\ref{lem:counting_orbits} we can calculate the number of $A$ orbits each $\Out(F_2)$ orbit is partitioned into by instead considering the orbits of the point stabilisers on the set of non-trivial maps $F_2 \to \Z/p$. In each case, this is an action of a finite group on a finite set, and so we may use the ``lemma that is not Burnside's'' \cite[Theorem 3.22]{Rotman}. This asserts that the number of orbits is equal to \[\frac{1}{|G|}\sum_{g \in G} |\Fix(g)|, \]
	
	\noindent where $\Fix(g)$ is the set of points fixed by the element $g$. The (non-trivial) maps to $\Z/p$ can be characterised by a pair $(l,m)$ of elements, taken to be the images of a fixed basis $\{a,b\}$ of $F_2$, and such that $(l,m) \neq (0,0)$, and we use this characterisation in what follows.
	
	For the edge, the stabiliser is isomorphic to the Klein four group $D_2$, and choosing an appropriate representative of the orbit it is generated by $\tau$ interchanging $a$ and $b$, and $\sigma_e$ inverting both generators. The situation is as in Table~\ref{t:D_2}: the ``implications'' column records the condition that the map determined by $(l,m)$ is fixed by precomposition with the given automorphism. The ``solutions'' are determined modulo $p$;  for $p=2,3$ there can be extra solutions so these are recorded separately.
	
	\begin{table}[h!]
		\begin{tabular}{|c|c|c|c|c|}
			\hline
			&& \multicolumn{3}{c|}{Solutions} \\
			Automorphism & Implications & mod 2 & mod 3 & mod $p$ \\ \hline
			$\Id$ & none & 3 & 8 & $p^2-1$ \\
			$\sigma_e$ & $(l,m) = (-l,-m)$ & 3 & 0 & 0 \\
			$ \tau$ & $ (l,m) = (m,l)$ & 1 & 2 & $p-1$ \\
			$ \tau \sigma_e$ & $(l,m) = (-m,-l)$ & 1 & 2 & $p-1$ \\ \hline
		\end{tabular}
        \caption{Edge orbits.}
        \label{t:D_2}
	\end{table}
	
	Summing, there are 2 orbits of edges if $p=2$, 3 if $p=3$ and in general $\frac{1}{4}(p-1)(p+3)$.
	
	For the rose-type vertices, the stabiliser is isomorphic to $D_4$, with (after choosing a representative) generators $\tau$ interchanging $a$ and $b$ and $\sigma_r$ sending $a \mapsto b \mapsto a^{-1}$. (One can check this is order 4, even as an automorphism.) Note that $\sigma_r^2=\sigma_e$.

	\begin{table}[h!]
		\begin{tabular}{|c|c|c|c|c|}
			\hline
			&& \multicolumn{3}{c|}{Solutions} \\
			Automorphism & Implications & mod 2 & mod 3 & mod $p$ \\ \hline
			$\Id$ & none & 3 & 8 & $p^2-1$ \\
			$\sigma_r$ & $(l,m) = (m, -l)$ & 1 & 0 & 0 \\
			$\sigma_r^2$ & $(l,m) = (-l,-m)$ & 3 & 0 & 0 \\
			$\sigma_r^3$ & $(l,m) = (-m, l)$ & 1 & 0 & 0 \\
			$ \tau$ & $ (l,m) = (m,l)$ & 1 & 2 & $p-1$ \\
			$\tau \sigma_r $ & $(l,m)=(l,-m)$ & 3 & 2 & $p-1$ \\
			$ \tau \sigma_r^2$ & $(l,m) = (-m,-l)$ & 1 & 2 & $p-1$ \\
			$ \tau \sigma_r^3$ & $(l,m) = (-l,m)$ & 3 & 2 & $p-1$ \\ \hline
		\end{tabular}
       \caption{Rose vertex orbits.} 
       \label{t:D_4}
	\end{table}

The situation is shown in Table~\ref{t:D_4}. Summing, there are 2 orbits of rose-type vertices if $p=2,3$ and in general $\frac{1}{8}(p-1)(p+5)$.
	
For the theta-type vertices, the stabiliser is isomorphic to $D_6$ with (after choosing a representative) generators $\tau$ interchanging $a$ and $b$ and $\sigma_t$ sending $a \mapsto b \mapsto ba^{-1}$. This last generator is order 6 only as an outer automorphism, and $\sigma_t^3$ represents the same outer automorphism as $\sigma_e$.
	
	\begin{table}[h!]
		\begin{tabular}{|c|c|c|c|c|}
			\hline
			&& \multicolumn{3}{c|}{Solutions} \\
			Automorphism & Implications & mod 2 & mod 3 & mod $p$ \\ \hline
			$\Id$ & none & 3 & 8 & $p^2-1$ \\
			$\sigma_t$ & $(l,m)=(m,m-l)$ & 0 & 0 & 0 \\
			$\sigma_t^2$ & $(l,m)=(m-l,-l)$ & 0 & 2 & 0 \\
			$\sigma_t^3$ & $(l,m) = (-l,-m)$ & 3 & 0 & 0 \\
			$\sigma_t^4$ & $(l,m)=(-m,l-m)$ & 0 & 2 & 0 \\
			$\sigma_t^5$ & $(l,m)=(l-m,l)$ & 0 & 0 & 0 \\			
			$ \tau$ & $ (l,m) = (m,l)$ & 1 & 2 & $p-1$ \\
			$ \tau \sigma_t$ & $(l,m) = (l,l-m)$ & 1 & 2 & $p-1$ \\
			$ \tau \sigma_t^2$ & $(l,m) = (l-m,-m)$ & 1 & 2 & $p-1$ \\
			$ \tau \sigma_t^3$ & $(l,m) = (-m,-l)$ & 1 & 2 & $p-1$ \\ 
			$ \tau \sigma_t^4$ & $(l,m) = (-l,m-l)$ & 1 & 2 & $p-1$ \\
			$ \tau \sigma_t^5$ & $(l,m) = (m-l,m)$ & 1 & 2 & $p-1$ \\ \hline
		\end{tabular}
        \caption{Theta vertex orbits.}
        \label{t:D_6}
	\end{table}
	
	The situation is shown in Table~\ref{t:D_6}. Summing, there is one orbit of theta-type vertices if $p=2$, 2 orbits if $p=3$ and in general $\frac{1}{12}(p-1)(p+7)$.
	
	Summing the two kinds of vertices, we see that the action of $A$ on the tree has 3 orbits of vertices if $p=2$, 4 if $p=3$ and in general $\frac{1}{24}(p-1)(5p+29)$.
\end{proof}

\begin{cor} \label{cor: rational homology of CPhi}
	Suppose $p$ is prime, $n=p+1$, and $\Phi$ is an order $p$ element of $\Out(F_n)$ that is not represented by an element of $\Aut(F_n)$ of order $p$. Then the rational homology of $C:= C\langle \Phi \rangle $ is given by 
	\begin{align*}
		\dim(H_0(C;\mathbb{Q})) &= 1 \\
		\dim(H_1(C;\mathbb{Q})) &= \begin{cases*}
			0, \qquad \qquad \qquad \qquad \text{ if } p=2,3 \\
			\frac{1}{24}(p-7)(p-5), \qquad \text{if } p \geq 5 
		\end{cases*} \\
		\dim(H_k(C;\mathbb{Q})) &= 0 \qquad \qquad \qquad \qquad \quad \text{ for } k \geq 2. \\
	\end{align*}
\end{cor}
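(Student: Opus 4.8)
The plan is to deduce everything from the tree action established in Proposition~\ref{prop:action}. Write $C = C\langle\Phi\rangle$ and let $T$ be the tree on which $C$ acts with finite stabilisers, as in that proposition; by construction $C$ acts through its quotient $A = C/\langle\Phi\rangle \le \Out(F_2)$ on the reduced spine of $\CV_2$, but for the homology computation only the existence of an action on a contractible $1$-dimensional complex with finite stabilisers and finitely many cell orbits is needed.

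First I would pass from $C$ to the quotient graph. Since $T$ is contractible, the Borel construction $EC \times_C T$ is weakly equivalent to $BC$, and its filtration by the skeleta of $C\backslash T$ yields the equivariant (Mayer--Vietoris) spectral sequence
\[ E^1_{s,t} = \bigoplus_{\sigma} H_t(C_\sigma;\Q) \ \Longrightarrow\ H_{s+t}(C;\Q), \]
where $\sigma$ runs over the $s$-cells of the finite graph $C\backslash T$ and $C_\sigma$ denotes the (finite) stabiliser of a chosen lift of $\sigma$. Because finite groups are rationally acyclic, $E^1_{s,t}$ vanishes for $t>0$, and the bottom row is precisely the simplicial chain complex of $C\backslash T$ with $\Q$-coefficients. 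Hence $H_*(C;\Q) \cong H_*(C\backslash T;\Q)$. (Alternatively one can first use that $\langle\Phi\rangle$ is central, hence finite normal, in $C$, so the Lyndon--Hochschild--Serre spectral sequence gives $H_*(C;\Q)\cong H_*(A;\Q)$, and then run the same argument for the $A$-action on the spine of $\CV_2$; either route reaches the same quotient graph.)

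Next I would record that $C\backslash T$ is a connected finite graph: it is connected because $T$ is connected, and it has $V$ vertices and $E$ edges where $V$ and $E$ are the numbers of vertex and edge orbits supplied by Proposition~\ref{prop:action}. A connected finite graph has $\dim H_0 = 1$, $\dim H_1 = E - V + 1$, and $H_k = 0$ for $k \ge 2$, so the only remaining task is to evaluate $E - V + 1$ in each case. For $p=2$ one has $(V,E) = (3,2)$ and for $p=3$ one has $(V,E) = (4,3)$, so $E - V + 1 = 0$, giving the stated vanishing of $H_1$ for $p = 2,3$. For $p \ge 5$, inserting $E = \tfrac14(p-1)(p+3)$ and $V = \tfrac1{24}(p-1)(5p+29)$ gives $E - V = \tfrac1{24}(p-1)(p-11)$, and therefore $E - V + 1 = \tfrac1{24}\bigl((p-1)(p-11)+24\bigr) = \tfrac1{24}(p^2 - 12p + 35) = \tfrac1{24}(p-5)(p-7)$, which is exactly the claimed dimension of $H_1(C;\Q)$.

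The only genuinely non-formal ingredient is Proposition~\ref{prop:action} itself --- the construction of the tree action and the orbit count --- which has already been carried out. Given that, the remaining content is the standard principle that a group acting on a tree with finite stabilisers has the rational homology of the quotient graph, together with the elementary arithmetic above; I expect no real obstacle in this corollary beyond noting connectedness and finiteness of $C\backslash T$ and keeping the bookkeeping of $E - V + 1$ straight.
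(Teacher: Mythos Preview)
Your proposal is correct and follows exactly the same approach as the paper's proof: invoke Proposition~\ref{prop:action}, use the Mayer--Vietoris/equivariant spectral sequence to identify the rational homology of $C$ with that of the quotient graph (since stabilisers are finite, hence rationally acyclic), and then compute $E - V + 1$ from the orbit counts. The paper's proof is two sentences citing \cite[Chapter VII.9]{BrownCohomology} and leaving the arithmetic implicit; you have simply spelled out the spectral sequence and the computation $\tfrac{1}{24}(p-1)(p-11)+1 = \tfrac{1}{24}(p-5)(p-7)$ explicitly.
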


\begin{proof}
	Since the action of Proposition~\ref{prop:action} is with finite stabilisers, a Mayer--Vietoris argument \cite[Chapter VII.9]{BrownCohomology} gives that the rational homology of $C\langle\Phi\rangle$ agrees with the (rational) homology of the quotient graph. The dimensions of the first homology groups then follow from the calculations of the number of edge and vertex orbits for this action.
\end{proof}

\begin{rem} \label{rem: transfer}
    Since $A$ is finite index in $\Out(F_m)$, there is an injective transfer map $\mathrm{tr}: H_\ast (\Out(F_m);\mathbb{Q}) \to H_\ast(A;\mathbb{Q})$ \cite[Chapter III.9]{BrownCohomology}, and any odd dimensional classes in the homology of $\Out(F_m)$ give rise to odd dimensional classes in the homology of $A$. Working rationally, this is isomorphic to the homology of $C\langle\Phi \rangle$, and so we will also see non-trivial classes for the centraliser arising this way. The earliest odd-dimensinal class that occurs this way is for $p=2$ and $n=13$, where we will see the transfer of Bartholdi's class in $H_{11}(\Out(F_7);\Q)$: the classes witnessed by Corollary~\ref{cor: rational homology of CPhi} occur in much lower dimensions and are not in the image of the transfer map.
\end{rem}

\section{The \texorpdfstring{$p$}{p}-adic Farrell--Tate \texorpdfstring{$K$}{K}-theory of \texorpdfstring{$\Out(F_{n})$}{Out(Fn)} for \texorpdfstring{$n=p-1,p,p+1,p+2,p+3$}{n=p-1,p,p+1,p+2,p+3}. } \label{Sec: Main calc}

In this section we start investigating the $p$-adic Farrell--Tate $K$-theory of $\Out(F_{n})$. Proposition \ref{prop:Tate K-theory} can be applied to $\Out(F_{n})$ since it admits a finite $\underline{E}G$ by \cite{CullerVogtmann1986CVn} and \cite{KrsticVogtmann1993}. We give a full computation of $\widehat{K_p}^*(B\Out(F_{n}))$ in the following cases: 
\begin{itemize}
    \item For $p \geq 3$ and $n=p-1,p$;
    \item For $p \geq 5$ and $n=p+1,p+2$;
    \item For $p \geq 7$ and $n=p+3$. 
\end{itemize}

It turns out that the cases $n=p-1$, $p$, $p+2$ follow quite easily from previously known results and Proposition \ref{prop:Tate K-theory}. The case $n=p+3$ additionally requires a recent calculation of Satoh \cite{Satoh24}. Finally, the case $n=p+1$ essentially uses the new results of Section \ref{section: Centraliser of Phi}. Note that at various points we require information about rational cohomology, and we have at our disposal results about rational homology: with untwisted coefficients these are linear duals, and hence will always have the same dimension.

\subsection{The case \texorpdfstring{$n=p-1$}{n=p-1}} \label{n is p-1} Let $p \geq 3$ (the case $p=2$ is straightforward). It follows from \cite{GMV98} and \cite[Proposition 3.1.2]{Chen97} that $\theta_{00}$ represents the only conjugacy class of order $p$ elements in $\Out(F_{p-1})$. By Proposition \ref{prop: rosethetacentralisers}
and Proposition \ref{prop:Tate K-theory}, since the centralisers are finite, we obtain
\[\widehat{K_p}^0(B\Out(F_{p-1})) \cong \Q_p,\]
and 
\[\widehat{K_p}^1(B\Out(F_{p-1}))=0.\]

\subsection{The case \texorpdfstring{$n=p$}{n=p}} \label{n is p} Here we assume $p \geq 3$. The case $p=2$ is considered separately below in Subsection \ref{n=2 case}. By \cite[Proposition 3.1.2]{Chen97} the group $\Out(F_p)$ has exactly $2$ conjugacy classes of order $p$ elements, given by $\theta_{01}$ and $R_p$. 
By Proposition \ref{prop: rosethetacentralisers} and Proposition \ref{prop:Tate K-theory}, we obtain
\[\widehat{K_p}^0(B\Out(F_{p})) \cong \Q_p \oplus \Q_p,\]
and 
\[\widehat{K_p}^1(B\Out(F_{p}))=0.\]

We conclude that the groups $\Out(F_{p})$ and $\Out(F_{p-1})$ satisfy weak duality in $p$-adic $K$-theory for $p \geq 3$. This is not the case for $\Out(F_{p+1})$ as we will see below. 

\subsection{The case \texorpdfstring{$n=p+1$}{n=p+1}} \label{n is p+1} This case is more involved than the previous cases. Let $p \geq 5$. 
It follows from \cite[Proposition 3.1.1]{Chen97} that there are exactly $4$ conjugacy classes of order $p$ elements in $\Aut(F_{p+1})$, denoted by $R_p$, $\theta_{02}$, $\theta_{20}$ and $\theta_{11}$.  The dual elements $\theta_{20}$ and $\theta_{02}$ become conjugate in $\Out(F_{p+1})$ \cite[Lemma 3.1.2]{Chen97}. By combining this with Corollary \ref{cor: strange order p element}, we conclude that $\Out(F_{p+1})$ has $4$ conjugacy classes of order $p$ elements represented by $R_p$, $\theta_{02}$, $\theta_{11}$ and $\Phi$. 
Next, using Proposition \ref{prop: rosethetacentralisers}, one has 
\[C\langle R_p \rangle \cong \Z/p \times ((\Z \rtimes \Z/2) \rtimes \Z/2),\]
where both copies of $\Z/2$ act by the sign. Further, we have 
\[C\langle \theta_{02} \rangle \cong \Z/p \times \Aut(F_2),\]
and
\[C\langle \theta_{11} \rangle \cong \Z/p \times ((\Z/2 \times \Z/2) \rtimes \Z/2),\]
where $\Z/2$ acts on $\Z/2 \times \Z/2$ by flipping the factors. The centraliser of $\Phi$ and its rational cohomology were studied in Section  \ref{section: Centraliser of Phi}. 

\begin{thm} \label{them: main computation} Let $p \geq 5$ be a prime. Then we have
\[\widehat{K_p}^0(B\Out(F_{p+1})) \cong \Q_p^4,\]
and 
\[\widehat{K_p}^1(B\Out(F_{p+1})) \cong \begin{cases} 0, & \text{if}  \;\; p=5,7\\ \Q_p^{\frac{1}{24}(p-7)(p-5)}, &  \text{if} \;\; p \geq 11  \end{cases}. \]

\end{thm}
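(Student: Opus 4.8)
The plan is to feed the four centralisers coming from Proposition~\ref{prop: rosethetacentralisers} and Corollary~\ref{cor: rational homology of CPhi} into Proposition~\ref{prop:Tate K-theory}. As recorded just above, $\con_p(\Out(F_{p+1}))$ consists exactly of $[R_p]$, $[\theta_{02}]$, $[\theta_{11}]$ and $[\Phi]$ (there are no further $p$-power order classes, since $\Out(F_{p+1})$ contains neither $\Z/p\times\Z/p$ nor an element of order $p^2$), so Proposition~\ref{prop:Tate K-theory} gives, for $m\in\Z$,
\[\widehat{K_p}^m(B\Out(F_{p+1})) \cong \prod_{g\in\{R_p,\theta_{02},\theta_{11},\Phi\}}\ \prod_{i\in\Z} H^{2i+m}(C\langle g\rangle;\Q_p),\]
and since each of these centralisers has rational cohomology concentrated in finitely many degrees the inner products are really finite sums (and $H^\ast(-;\Q_p)=H^\ast(-;\Q)\otimes_\Q\Q_p$, so we may compute $\Q$-dimensions).

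First I would show that the three ``classical'' centralisers are rationally acyclic, so that each contributes exactly one $\Q_p$ in the $m=0$ slot and nothing in the $m=1$ slot. The group $C\langle\theta_{11}\rangle\cong\Z/p\times((\Z/2\times\Z/2)\rtimes\Z/2)$ is finite, hence rationally acyclic. For $C\langle R_p\rangle\cong\Z/p\times((\Z\rtimes\Z/2)\rtimes\Z/2)$ I would observe that $\Z\rtimes\Z/2\cong\Z/2\ast\Z/2$ has vanishing reduced rational homology, and then apply the Lyndon--Hochschild--Serre spectral sequence to the extension by the outer $\Z/2$ (the $\Z/p$ factor being handled by K\"unneth). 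For $C\langle\theta_{02}\rangle\cong\Z/p\times\Aut(F_2)$ the substantive point is that $\Aut(F_2)$ is rationally acyclic; this is standard, and can be reproved from $\Out(F_2)\cong\GL_2(\Z)$ being rationally acyclic (an amalgam $D_4\ast_{D_2}D_6$ of finite groups) by running the spectral sequence of $1\to F_2\to\Aut(F_2)\to\GL_2(\Z)\to1$ and checking that $H^\ast(\GL_2(\Z);H^1(F_2;\Q))$ vanishes --- the coefficient module is the standard rational representation, on which the central element $-I$ acts by $-1$, so every fixed-point group appearing in the Mayer--Vietoris sequence for the amalgam is zero.

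The only new ingredient is $C\langle\Phi\rangle$, for which Corollary~\ref{cor: rational homology of CPhi} gives $H^0(C\langle\Phi\rangle;\Q_p)\cong\Q_p$, $H^1(C\langle\Phi\rangle;\Q_p)\cong\Q_p^{\frac{1}{24}(p-7)(p-5)}$, and $H^k(C\langle\Phi\rangle;\Q_p)=0$ for $k\geq2$. Summing the four contributions now gives $\widehat{K_p}^0(B\Out(F_{p+1}))\cong\Q_p^{1+1+1+1}=\Q_p^4$ and $\widehat{K_p}^1(B\Out(F_{p+1}))\cong\Q_p^{\frac{1}{24}(p-7)(p-5)}$, and it remains only to note that $\frac{1}{24}(p-7)(p-5)$ equals $0$ for $p=5,7$ and is a positive integer for every prime $p\geq11$, which is the asserted case distinction.

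The hard work sits entirely in the earlier sections: the real obstacle is Corollary~\ref{cor: rational homology of CPhi}, which depends on the geometric analysis of Section~\ref{section: Centraliser of Phi} (realising $\Phi$ by a rotation of a graph, identifying $C\langle\Phi\rangle/\langle\Phi\rangle$ with a finite-index subgroup of $\Out(F_2)$, counting the vertex and edge orbits of its action on the reduced spine of $\CV_2$, and collapsing via Mayer--Vietoris to the quotient graph). Granting that, the present theorem is mostly bookkeeping; the one place that needs a little care is recording the rational acyclicity of $\Aut(F_2)$ used for $C\langle\theta_{02}\rangle$.
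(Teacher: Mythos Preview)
Your proposal is correct and follows essentially the same route as the paper: apply Proposition~\ref{prop:Tate K-theory} to the four conjugacy classes, verify that the centralisers of $R_p$, $\theta_{02}$, $\theta_{11}$ are rationally acyclic (the paper cites \cite{HatcherVogtmann98} for $\Aut(F_2)$ rather than giving your direct argument, and handles $C\langle R_p\rangle$ by two applications of Lyndon--Hochschild--Serre just as you do), and then read off the contribution of $C\langle\Phi\rangle$ from Corollary~\ref{cor: rational homology of CPhi}. The only cosmetic difference is your self-contained verification of the rational acyclicity of $\Aut(F_2)$ in place of the citation.
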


\begin{proof} By Proposition \ref{prop:Tate K-theory}, the equivariant Chern character induces an isomorphism $(m=0,1)$:
\begin{align*}& \widehat{K_p}^m(BG) \cong  \prod_{i \in \Z} (H^{2i+m}(C   \langle R_p  \rangle; \Q_p) \times \\ &\times H^{2i+m}(C   \langle \theta_{02}  \rangle; \Q_p) \times  H^{2i+m}(C   \langle \theta_{11}  \rangle; \Q_p) \times  H^{2i+m}(C   \langle \Phi  \rangle; \Q_p)). \end{align*}

\noindent Since finite groups are rationally acyclic, it follows from \cite{HatcherVogtmann98} that the centraliser $C\langle \theta_{02} \rangle$ is rationally acyclic. The centraliser $C\langle \theta_{11} \rangle$ is finite and hence rationally acyclic. Finally, by twice using the Lyndon--Hochschild--Serre spectral sequence, 
we can see that $C\langle R_p \rangle$ is also rationally acyclic. Now the result follows from the calculation of $H^{*}(C   \langle \Phi  \rangle; \Q_p)$ which was done in Corollary \ref{cor: rational homology of CPhi}. \end{proof} 

We conclude that the groups $\Out(F_{p+1})$ for $p \geq 11$ do not satisfy weak duality in $p$-adic $K$-theory. The Farrell--Tate $K$-theory Euler characteristic tends to $-\infty$ as $p$ tends to $\infty$ showing that the failure of weak duality increases as the prime $p$ grows. The element $\Phi$ plays a crucial role in this, since the similar phenomenon does not occur for the cases $n=p-1$, $p$, $p+2$ and $p+3$. 

\subsection{The case \texorpdfstring{$n=p+2$}{n=p+2}} \label{n is p+2} Let $p \geq 5$. By \cite[Proposition 3.1.2]{Chen97}, the conjugacy classes of order $p$ elements are represented by $R_p$, $\theta_{03}$, and $\theta_{12}$. By Proposition \ref{prop: rosethetacentralisers}, the centralisers are given as follows: 
\[C\langle R_p \rangle \cong \Z/p \times ((F_2 \rtimes \Aut(F_2)) \rtimes \Z/2),\]
\[C\langle \theta_{03} \rangle \cong \Z/p \times \Aut(F_3),\]
and 
\[C\langle \theta_{12} \rangle \cong \Z/p \times \Z/2 \times \Aut(F_2).\]
It follows from the proof of \cite[Proposition 3.4.4, page 56]{Chen97} and the Lyndon--Hochschild--Serre spectral sequence that $C\langle R_p\rangle$ is rationally acyclic. The centralisers $C\langle \theta_{03} \rangle$ and $C\langle \theta_{12} \rangle$ are rationally acyclic as well by \cite{HatcherVogtmann98}. Hence using Proposition \ref{prop:Tate K-theory}, we obtain 
\[\widehat{K_p}^m(B\Out(F_{p+2})) \cong \begin{cases} \Q_p^3, \qquad \text{if}  \; m=0\\ 0, \qquad \;\;  \text{if} \;\; m=1  \end{cases}.\]
In particular, $\Out(F_{p+2})$ satisfies weak duality in $p$-adic $K$-theory for $p \geq 5$.

\subsection{The case \texorpdfstring{$n=p+3$}{n=p+3}} \label{n is p+3} Let $p \geq 7$. Again by \cite[Proposition 3.1.2]{Chen97}, the conjugacy classes of order $p$ elements are represented by $R_p$, $\theta_{04}$, $\theta_{13}$, and $\theta_{22}$. By Proposition \ref{prop: rosethetacentralisers}, the centralisers are given as follows:
\[C\langle R_p\rangle \cong \Z/p \times ((F_3 \rtimes \Aut(F_3)) \rtimes \Z/2),\]
\[C\langle \theta_{04} \rangle \cong \Z/p \times \Aut(F_4),\]
\[C\langle \theta_{13} \rangle \cong \Z/p \times \Z/2 \times \Aut(F_3),\]
and 
\[C\langle \theta_{22} \rangle \cong \Z/p \times (\Aut(F_2) \times \Aut(F_2)) \rtimes \Z/2.\]
By \cite{HatcherVogtmann98} and Lyndon--Hochschild--Serre spectral sequence arguments, the centralisers $C\langle \theta_{13} \rangle$ and $C\langle \theta_{22} \rangle$ are rationally acyclic. Again by \cite{HatcherVogtmann98} and Gerlits' thesis \cite[Theorem 3.3]{Gerlits}, we know that $H_i(\Aut(F_4); \mathbb{Q})=0$, unless $i=0,4$ and $H_4(\Aut(F_4); \mathbb{Q}) \cong \Q$, implying that $H^4(C\langle \theta_{04} \rangle; \Q) \cong \Q$ and $H^i(C\langle \theta_{04} \rangle; \Q)=0$ if $i \neq 0,4$. Some more work is required to compute the rational cohomology of  $C\langle R_p\rangle$. This is done in Lemma \ref{lem: roseF3} below which shows that $H^4(C\langle R_p\rangle; \Q) \cong \Q$ and $H^i(C\langle R_p\rangle; \Q)=0$ if $i \neq 0,4$.  All in all, we get:
\[\widehat{K_p}^m(B\Out(F_{p+3})) \cong \begin{cases} \Q_p^6, \qquad \text{if}  \; m=0\\ 0, \qquad \;\;  \text{if} \;\; m=1  \end{cases}.\]
In particular, $\Out(F_{p+3})$ satisfies weak duality in $p$-adic $K$-theory for $p \geq 7$.

\begin{lem} \label{lem: roseF3} Let $l \geq 2$. Then the rational cohomology of the centraliser $C\langle R_l\rangle$ of $R_l \in \Out(F_{l+3})$ is given by
\[H^i(C\langle R_l\rangle; \Q) \cong \begin{cases}  \Q, \qquad \text{if} \qquad i=0,4 \\ 0, \qquad   \;\text{if} \qquad   i \neq 0,4.  \end{cases}\]
\end{lem}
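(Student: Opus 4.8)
The plan is to peel off the finite factors and compute the remaining rational cohomology via a Lyndon--Hochschild--Serre spectral sequence over $\Out(F_3)$, with Satoh's twisted computation as the essential input.

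First I would use Proposition~\ref{prop: rosethetacentralisers} with $n=l+3$ to write $C\langle R_l\rangle \cong \Z/l \times \big((F_3 \rtimes \Aut(F_3))\rtimes \Z/2\big)$. Since $\Z/l$ is finite and the outer $\Z/2$-extension splits, rationally $H^*(C\langle R_l\rangle;\Q) \cong H^*(F_3 \rtimes \Aut(F_3);\Q)^{\Z/2}$. The structural point I would then exploit is that $F_3 \rtimes \Inn(F_3)$ is a normal subgroup of $F_3 \rtimes \Aut(F_3)$ with quotient $\Out(F_3)$, that the map $(a,\alpha_b)\mapsto(ab,b)$ identifies it with $F_3 \times F_3$, and that under this identification the $\Z/2$-action described after Proposition~\ref{prop: rosethetacentralisers} becomes the swap of the two $F_3$-factors, acting trivially on the quotient $\Out(F_3)$.

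Next I would apply the LHS spectral sequence of $1 \to F_3 \times F_3 \to F_3 \rtimes \Aut(F_3) \to \Out(F_3) \to 1$ and pass to $\Z/2$-invariants (an exact operation over $\Q$). On $H^q(F_3\times F_3;\Q) = H^*(F_3;\Q)^{\otimes 2}$ the swap acts with the Koszul sign, so the invariant $E_2$-page has just three rows: $E_2^{p,0}=H^p(\Out(F_3);\Q)$, $E_2^{p,1}=H^p(\Out(F_3);H^1(F_3;\Q))$, and $E_2^{p,2}=H^p(\Out(F_3);\bigwedge^2 H^1(F_3;\Q))$, the top row being the antisymmetric summand of $H^1\otimes H^1$ precisely because of the sign. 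All of these vanish for $p>3$ since $\mathrm{vcd}\,\Out(F_3)=3$. Plugging in $H^*(\Out(F_3);\Q)=\Q$ in degree $0$ (Hatcher--Vogtmann~\cite{HatcherVogtmann98}), the vanishing of $H^i(\Out(F_3);H^1(F_3;\Q))$ for $i\le 2$, and Satoh's computation~\cite{Satoh24} of $H^*(\Out(F_3);\bigwedge^2 H_1(F_3;\Q))$ — transported to $H^1$-coefficients via the contragredient and the virtual Bieri--Eckmann duality of $\Out(F_3)$ — and checking that the single relevant $d_2$ is forced to be an isomorphism, I expect the surviving $E_\infty$-terms to be $\Q$ in total degrees $0$ and $4$, the degree-$4$ class sitting in $E_\infty^{2,2}=H^2(\Out(F_3);\bigwedge^2 H^1(F_3;\Q))\cong\Q$. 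This is exactly the asserted answer.

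The hard part will be this last step: correctly identifying the twisted rational cohomology groups of $\Out(F_3)$ that occur — matching the module Satoh actually uses ($H_1$) with the one appearing here ($\bigwedge^2 H^1$), and pinning down $H^1$ and $H^2$ of $\Out(F_3)$ with standard-type coefficients — and then checking that the differentials of the three-row spectral sequence conspire to leave a single $\Q$ in degree $4$ and nothing else in positive degrees. The identifications in the first two steps, including $F_3\rtimes\Inn(F_3)\cong F_3\times F_3$ and the fact that the $\Z/2$ acts by the factor swap, are elementary by comparison.
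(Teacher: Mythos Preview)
Your overall strategy is the same as the paper's: reduce to $H^*(F_3\rtimes\Aut(F_3);\Q)^{\Z/2}$, run the Lyndon--Hochschild--Serre spectral sequence for the extension with kernel $F_3\times F_3$ and quotient $\Out(F_3)$, and feed in the known twisted cohomology of $\Out(F_3)$. Two points, however, are not right.

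The substantive gap is the passage from Satoh's module $\bigwedge^2 H_1(F_3;\Q)$ to the module that actually appears, $\bigwedge^2 H^1(F_3;\Q)$. Virtual Bieri--Eckmann duality does not do this: for a virtual duality group $G$ of dimension $d$ it gives $H^i(G;M)\cong H_{d-i}(G;D\otimes M)$ with a nontrivial dualising module $D$, not an identification of $H^i(G;M)$ with $H^i(G;M^*)$. $\Out(F_3)$ is not a rational Poincar\'e duality group, so $D$ cannot be ignored, and the contragredient (which on $\GL_3(\Z)$ is an \emph{outer} automorphism) does not lift to an automorphism of $\Out(F_3)$ you could use instead. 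The paper handles this differently: it first invokes Chen's computation of $H^*(\Out(F_3);H^1(F_3;\Q)^{\otimes 2})$, which is $\Q$ concentrated in degree $2$, so the summand $H^*(\Out(F_3);\bigwedge^2 H^1)$ vanishes outside degree $2$; then it observes that the Brown cochain complexes on the spine for $\bigwedge^2 H_1$ and $\bigwedge^2 H^1$ have the same dimensions in each degree (fixed points under finite stabilisers commute with dualising up to dimension), hence the same Euler characteristic, namely $1$ by Satoh. This forces $H^2(\Out(F_3);\bigwedge^2 H^1)\cong\Q$.

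A secondary issue is the middle row. You only claim $H^i(\Out(F_3);H^1(F_3;\Q))=0$ for $i\le 2$, but you need $i=3$ as well: otherwise $E_2^{3,1}$ sits in total degree $4$ and no differential can touch it (its $d_2$ target is in column $5>\mathrm{vcd}$ and its $d_2$ source $E_2^{1,2}$ vanishes). Chen's thesis gives the vanishing for all $i$, and once the middle row is zero the spectral sequence collapses at $E_2$; there is no ``single relevant $d_2$'' to check.
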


\begin{proof} By Proposition \ref{prop: rosethetacentralisers} and a Lyndon--Hochschild--Serre spectral sequence argument, it suffices to compute
\[H^i(F_3 \rtimes \Aut(F_3); \Q)^{\Z/2}\]
for any $i \in \Z$. Here we recall that $\Z/2$ acts on $F_3 \rtimes \Aut(F_3)$ by sending $(x,\sigma)$ to $(x^{-1}, \alpha_x \sigma)$, where $x \in F_3$, and $\sigma \in \Aut(F_3)$, and $\alpha_x(t)=xtx^{-1}$ (see e.g. \cite[page 32]{Chen97}). Following \cite[(3.5.23), page 61]{Chen97}, we consider the extension
\[
1 \to F_3 \rtimes F_3 \to F_3 \rtimes \Aut(F_3) \to \Out(F_3) \to 1,\]
where $F_3$ acts on $F_3$ via the conjugation. This short exact sequence is in fact a short exact sequence of groups with $\Z/2$-actions. The action on $\Out(F_3)$ is trivial, the action on the middle term was described above and the action on $F_3 \rtimes F_3$ is given by
\[(x,y) \mapsto (x^{-1}, xy).\]
The group $F_3 \rtimes F_3$ is isomorphic to $F_3 \times F_3$ via the shear map (sending $(x,y)$ to $(xy, y)$) and the $\Z/2$-action on $F_3 \rtimes F_3$ corresponds to the flip action on $F_3 \times F_3$. Now, the Lyndon--Hochschild--Serre spectral sequence for this extension is given by:
\[E_{2}^{ij}=H^i(\Out(F_3); H^j(F_3 \times F_3; \Q)) \Rightarrow H^{i+j}( F_3 \rtimes \Aut(F_3); \Q),\]
where $\Out(F_3)$ acts on $H^*(F_3 \times F_3; \Q)$ via the diagonal action of $\Aut(F_3)$ on $F_3 \times F_3$. 
Since the cohomological dimension of $F_3$ is equal to $1$, the K\"unneth theorem gives the following description of the $E_2$-term: 
\begin{align*}E_2^{i0} \cong H^i(\Out(F_3); \Q),\\ E_2^{i1} \cong H^i(\Out(F_3);H^1(F_3;\Q) \oplus H^1(F_3;\Q)), \\ E_2^{i2} \cong H^i(\Out(F_3); H^1(F_3; \Q) ^{\otimes 2}), \\ E_2^{ij}=0, \quad j \geq 3.  \end{align*}
We know that $\Out(F_3)$ is rationally acyclic using \cite{Brady93, Ohashi}. By \cite[Lemma 3.2.2]{Chen97}, we also know that $H^i(\Out(F_3); H^1(F_3; \Q))=0$ for any $i \in \Z$, and $H^i(\Out(F_3); H^1(F_3; \Q)^{\otimes 2})=0$ unless $i=2$. Furthermore, again by \cite[Lemma 3.2.2]{Chen97}, one has $H^2(\Out(F_3); H^1(F_3; \Q) ^{\otimes 2}) \cong \Q$. Hence the Lyndon--Hochshild--Serre spectral sequence collapses and implies that $H^i(F_3 \rtimes \Aut(F_3); \Q)=0$ unless $i=0,4$ and 
\[H^4(F_3 \rtimes \Aut(F_3); \Q) \cong H^2(\Out(F_3); H^1(F_3; \Q) ^{\otimes 2}) \cong \Q.\]
Furthermore, this isomorphism is $\Z/2$-equivariant, implying that the group of invariants  $H^4(F_3 \rtimes \Aut(F_3); \Q)^{\Z/2}$ is isomorphic to 
\[H^2(\Out(F_3); H^1(F_3; \Q) ^{\otimes 2})^{\Z/2}.\]
Here we take $\Z/2$-fixed points with respect to the action coming from the $\Z/2$-action on $H^1(F_3; \Q) ^{\otimes 2}$ which is given by the sign and flipping the tensor factors. The sign appears because of the K\"unneth theorem. Since we are working rationally, one has 
\[ H^i(\Out(F_3); H^1(F_3; \Q)^{\otimes 2})^{\Z/2} \cong H^i(\Out(F_3); (H^1(F_3; \Q)^{\otimes 2})^{\Z/2}),\]
and since the $\Z/2$-action involves the sign, the $\Out(F_3)$-module $(H^1(F_3; \Q)^{\otimes 2})^{\Z/2}$ is isomorphic to the second exterior power $\bigwedge^2 H^1(F_3; \Q)$. Hence it remains to show that 
\[H^2(\Out(F_3); {\bigwedge}^2 H^1(F_3; \Q)) \cong \Q.\]
Satoh in \cite[Theorem 3]{Satoh24} computes the cohomology with coefficients in the dual module $\bigwedge^2 H_1(F_3; \Q)$: 
\[H^i(\Out(F_3); {\bigwedge}^{2} H_1(F_3; \Q)) \cong \begin{cases}  \Q, \qquad \text{if} \qquad i=2 \\ 0, \qquad   \;\text{if} \qquad   i \neq 2.  \end{cases}.\]
To make this calculation, Satoh uses the (finite) Brown cochain complex (also referred to as the Bredon cochain complex) of the equivariant spine $K_3$ of $CV_3$ with coefficients in $\bigwedge^2 H_1(F_3; \Q)$. For any $\Out(F_3)$-equivariant $l$-cell $\sigma$ of $K_3$, we have the corresponding summand $(\bigwedge^2 H_1(F_3; \Q))^{G_{\sigma}}$ in the $\Q$-vector space of $l$-cochains, where $G_{\sigma}$ is the (finite) stabiliser of $\sigma$. By \cite[Theorem 3]{Satoh24}, the Euler characteristic of this chain complex is equal to $1$. The Brown cochain complex for $\bigwedge^2 H^1(F_3; \Q)$ is different to the one for $\bigwedge^2 H_1(F_3; \Q)$. However, the $\Q$-dimensions of the modules of $l$-cochains agree for any $l$ since rationally taking invariants with respect to finite groups commutes with forming dual vector spaces. Hence the Euler characteristics for $H^*(\Out(F_3); {\bigwedge}^{2} H_1(F_3; \Q))$ and $H^*(\Out(F_3); {\bigwedge}^{2} H^1(F_3; \Q))$ agree and we obtain 
\[\sum_{i \geq 0} (-1)^i \dim H^i(\Out(F_3); {\bigwedge}^2 H^1(F_3; \Q))=1,\]
showing that there exists $i$ such that $\dim H^i(\Out(F_3); {\bigwedge}^2 H^1(F_3; \Q)) \neq 0$. We already know that all these cohomology groups are trivial except possibly for $i=2$, implying that  
\[H^2(\Out(F_3); {\bigwedge}^2 H^1(F_3; \Q)) \cong \Q\]
which completes the proof. 
\end{proof}

\section{Low dimensional calculations and odd classes in \texorpdfstring{$K$}{K}-theory of \texorpdfstring{$\Out(F_n)$}{Out(Fn)}}

In this section we attempt to calculate the $p$-adic Farrell--Tate $K$-theory of $\Out(F_n)$ when $n \leq 12$. For appropriate primes we manage to fully calculate these $K$-theory groups. As a consequence we see that there is an explicit class in $K^1(B\Out(F_{12})) \otimes_{\Z} \mathbb{Q}$ which does not come from the rational cohomology of $\Out(F_{12})$. With the methods previously available we cannot find such classes in the groups $K^1(B\Out(F_{n})) \otimes_{\Z}\Q$ for $n \leq 11$ without using computer calculations.

Table~\ref{t:Farrell--Tate} shows the values of the $p$-adic Farrell--Tate $K$-theory of $\widehat{K_p}^*(B\Out(F_n))$ for $p \leq 11$ and $n \leq 12$ which we can compute combining our methods with previous computations. The empty spaces indicate groups which are not computed.

\begin{table}[h!]
    \centering
    \begin{tabular}{|c|c|c|c|c|c|c|c|c|c|c|}
        \hline 
        p
        & \multicolumn{2}{c|}{2} 
        & \multicolumn{2}{c|}{3} 
        & \multicolumn{2}{c|}{5} 
        & \multicolumn{2}{c|}{7} 
        & \multicolumn{2}{c|}{11} \\ 
        \hline
        \diagbox[width=2em]{n}{} 
        & even & odd 
        & even & odd 
        & even & odd 
        & even & odd 
        & even & odd \\ 
        \hline
        2 & $\Q_2^4$ & 0  & $\Q_3$  & 0  & 0  & 0  & 0  & 0  & 0  & 0  \\ \hline
        3 &  &  & $\Q_3^2$  & 0  & 0  & 0  & 0  & 0  & 0  & 0  \\ \hline
        4 &  &  &  &  & $\Q_5$  & 0  & 0  & 0  & 0  & 0  \\ \hline
        5 &  &  &  &  & $\Q_5^2$  & 0  & 0  & 0  & 0  & 0  \\ \hline
        6 &  &  &  &  & $\Q_5^4$   & 0  & $\Q_7$  & 0  & 0  & 0  \\ \hline
        7 &  &  &  &  & $\Q_5^3$  & 0  & $\Q_7^2$  & 0  & 0  & 0  \\ \hline
        8 &  &  &  &  & $\Q_5^7$  & 0  & $\Q_7^4$  & 0  & 0  & 0  \\ \hline
        9 &  &  &  &  &  &  & $\Q_7^3$  & 0  & 0  & 0  \\ \hline
        10 &  &  &  &  &  &  & $\Q_7^6$  & 0  & $\Q_{11}$  & 0  \\ \hline
        11 &  &  &  &  &  &  & $\Q_7^5 \oplus H^{\ev}$  & $\Q_7 \oplus H^{\odd}$  &$\Q_{11}^2$  &0  \\ \hline
        12 &  &  &  &  &  &  &  &  & $\Q_{11}^4$   & $\Q_{11}$  \\ \hline
    \end{tabular}
    \caption{The $p$-adic Farrell--Tate $K$-theory of $\Out(F_n)$}
    \label{t:Farrell--Tate}
\end{table}

\noindent Here we use the notations $H^{\ev}=\prod_{i > 0}H^{2i}(F_{4} \rtimes \Aut(F_{4}); \Q_7)^{\Z/2}$ and $H^{\odd}=\prod_{i \geq 0}H^{2i+1}(F_{4} \rtimes \Aut(F_{4}); \Q_7)^{\Z/2}$. These groups have not been computed. Most of the entries in this table can be explained using Section~\ref{Sec: Main calc} 
and in particular Theorem~\ref{them: main computation}. 
We also rely on the low dimensional computations of the rational cohomology of $\Out(F_n)$ and $\Aut(F_n)$ due to Brady, Vogtmann, Hatcher, Gerlits, Ohashi and Bartholdi \cite{Brady93, HatcherVogtmann98, Vogtmann02, Gerlits, Ohashi, Bartholdi}. These computations combined with Table~\ref{t:Farrell--Tate} and L\"uck's Theorem (Theorem \ref{thm:Wolfgang}), give calculations for the rational $p$-adic $K$-theory $K_p^*(B\Out(F_n)) \otimes_{\Z} \Q$ for $p \leq 7$ and $n \leq 7$, which are shown in Table~\ref{t:p-adic}.

\begin{table}[h!] 
    \centering
    \begin{tabular}{|c|c|c|c|c|c|c|c|c|}
        \hline 
        p
        & \multicolumn{2}{c|}{2} 
        & \multicolumn{2}{c|}{3} 
        & \multicolumn{2}{c|}{5} 
        & \multicolumn{2}{c|}{7} \\ 
        \hline
        \diagbox[width=2em]{n}{} 
        & even & odd 
        & even & odd 
        & even & odd 
        & even & odd \\ 
        \hline
        2 & $\Q_2^5$ & 0  & $\Q_3^2$  & 0  & $\Q_5$  & 0  & $\Q_7$  & 0    \\ \hline
        3 &  &  & $\Q_3^3$  & 0  & $\Q_5$  & 0  & $\Q_7$   & 0    \\ \hline
        4 &  &  &  &  & $\Q_5^3$  & 0  & $\Q_7^2$  & 0   \\ \hline
        5 &  &  &  &  & $\Q_5^3$  & 0  & $\Q_7$   & 0    \\ \hline
        6 &  &  &  &  & $\Q_5^6$   & 0  & $\Q_7^3$  & 0    \\ \hline
        7 &  &  &  &  & $\Q_5^5$  & $\Q_5$  & $\Q_7^4$  & $\Q_7$   \\ \hline
        
    \end{tabular}
    \caption{The rationalised $p$-adic $K$-theory of $\Out(F_n)$}
    \label{t:p-adic}
\end{table}

We give details of the most interesting and exceptional entries in these tables below. The rest follow easily.

\subsection{\texorpdfstring{$K$}{K}-theory of \texorpdfstring{$\Out(F_2)$}{Out(F2)}} \label{n=2 case} It follows from \cite{Nielsen24} that the natural map $\Out(F_2)$ to $\GL_2(\Z)$ is an isomorphism. On the other hand, $\GL_2(\Z)$ is isomorphic to the amalgamated product
\[D_4 \ast_{D_2} D_{6}.\]
Thus the $K$-theory of $B\Out(F_2)$ can be calculated using a Mayer--Vietoris sequence argument. 
The relevant primes here are $p=2,3$ as $\Out(F_2)$ only contains $2$ and $3$ primary torsion. 

For Farrell--Tate $K$-theory we get
\[\widehat{K_2}^m(B\Out(F_{2})) \cong \begin{cases} \Q_2^4, \qquad  \text{if} \;    m=0\\ 0, \qquad \;\; \text{if} \;  \; m=1 \end{cases}, \]
and 
\[\widehat{K_3}^m(B\Out(F_{2})) \cong \begin{cases} \Q_3, \qquad \text{if}  \; m=0\\ 0, \qquad \;\;  \text{if} \;\; m=1  \end{cases}.\]
The latter agrees with the calculation in Subsection \ref{n is p-1}. 
In fact in this case one can fully compute the (integral) $K$-theory groups, though we do not go into details here. 
We only summarise the rational $K$-theory groups
\[K^m (B\Out(F_{2})) \otimes_{\Z} \Q \cong \begin{cases} \Q \oplus \Q_2^4 \oplus \Q_3, \qquad \text{if}  \;\; m=0\\ 0, \qquad \;\;\;\;\;\;\;\;\;\;\;\;\;\;\;\;\;\; \text{if} \;\; m=1  \end{cases}.\]
In particular, $\widehat{K_p}^1(B\Out(F_{2}))$ vanishes for any prime $p$.

\subsection{\texorpdfstring{$K$}{K}-theory of \texorpdfstring{$\Out(F_7)$}{Out(F7)}} Assume that $p \geq 5$. The relevant primes in this case are $p=5,7$. We only consider the case $p=5$ since $p=7$ is analogous. 

Using Subsection \ref{n is p+2}, we obtain 
\[\widehat{K_5}^m(B\Out(F_{7})) \cong \begin{cases} \Q_5^3, \qquad \text{if}  \; m=0\\ 0, \qquad \;\;  \text{if} \;\; m=1  \end{cases}.\]
Bartholdi computed the rational homology of $\Out(F_7)$ \cite{Bartholdi}: it has two even and one odd dimensional classes. By Bartholdi's calculation and Theorem \ref{thm:Wolfgang}, we get a full calculation of the rational $5$-adic $K$-theory:
\[K_5^m(B\Out(F_{7})) \otimes_{\Z} \Q \cong \begin{cases} \Q_5^5, \qquad \text{if}  \; m=0\\ \Q_5, \qquad \;  \text{if} \;m=1  \end{cases}.\]
This is the first instance when an odd class in the $K$-theory of $\Out(F_n)$ appears, though the Farrell--Tate $K$-theory is still even. 


\subsection{The Farrell--Tate \texorpdfstring{$K$}{K}-theory of \texorpdfstring{$\Out(F_8)$}{Out(F8)}} \label{F8} The $p=7$ case follows immediately from Theorem \ref{them: main computation}. We have to carefully consider the $p=5$ case since $8>7=2 \cdot 5-3$. The group $\Out(F_8)$ contains, up to conjugacy, one rank $2$ elementary abelian $5$-subgroup (i.e. a subgroup isomorphic to $\Z/5 \times \Z/5$). For conjugacy classification of order $5$ elements, one needs to use the results of Glover and Henn \cite[Proposition 1.3] {GloverHenn10}. We have the rose and theta elements $R_5$, $\theta_{04}$, $\theta_{13}$ and $\theta_{22}$. Additionally there is one extra conjugacy class of order $5$ elements represented by $\Delta$, a diagonal embedding into 
a rank $2$ elementary abelian $5$-subgroup. The centraliser of $\Delta$ is rationally acyclic by \cite[Proposition 1.3] {GloverHenn10}. Further, by Chen's thesis (Proposition \ref{prop: rosethetacentralisers}), we have
\[C\langle R_5\rangle \cong \Z/5 \times ((F_3 \rtimes \Aut(F_3)) \rtimes \Z/2),\]
\[C\langle \theta_{04} \rangle \cong \Z/5 \times \Aut(F_4),\]
\[C\langle \theta_{13} \rangle \cong \Z/5 \times \Z/2 \times \Aut(F_3),\]
and 
\[C\langle \theta_{22} \rangle \cong \Z/5 \times (\Aut(F_2) \times \Aut(F_2)) \rtimes \Z/2.\] 
The rational cohomology of $C\langle R_5\rangle$ was computed in Lemma \ref{lem: roseF3}. Further, by \cite{HatcherVogtmann98} the centralisers $C\langle \theta_{13} \rangle$ and $C\langle \theta_{22} \rangle$ are rationally acyclic. Again by \cite{HatcherVogtmann98} and Gerlits' thesis \cite[Theorem 3.3]{Gerlits}, we know that $H_i(\Aut(F_4); \mathbb{Q})=0$, unless $i=0,4$ and $H_4(\Aut(F_4); \mathbb{Q}) \cong \Q$. Hence by Proposition \ref{prop:Tate K-theory}, we get
\[\widehat{K_5}^m(B\Out(F_{8})) \cong \begin{cases} \Q_5^7, \qquad \; \text{if}  \; m=0\\ 0, \qquad \;\;\;  \text{if} \;\; m=1  \end{cases}.\]
We cannot fully compute the rational $5$-adic $K$-theory since the rational cohomology of $\Out(F_8)$ has not been calculated.

\subsection{The Farrell--Tate \texorpdfstring{$K$}{K}-theory of \texorpdfstring{$\Out(F_{11})$}{Out(F11)}} Here the relevant primes are $p \leq 11$. This is the first instance when the Farrell--Tate $K$-theory has odd classes, though we cannot see these classes explicitly. 

The conjugacy classes of $7$-torsion elements are represented by $R_7$, $\theta_{05}$, $\theta_{14}$, $\theta_{23}$ and the centralisers are given as follows (Proposition \ref{prop: rosethetacentralisers}):
\[C \langle R_7 \rangle  \cong \Z/7 \times (F_{4} \rtimes \Aut(F_{4})) \rtimes \Z/2,\]
\[C \langle \theta_{05} \rangle \cong \Z/7 \times \Aut(F_5),\]
\[C \langle \theta_{14} \rangle \cong \Z/7 \times \Z/2 \times \Aut(F_4),\]
and
\[C \langle \theta_{23} \rangle \cong \Z/7 \times \Aut(F_2) \times \Aut(F_3).\]
Gerlits in \cite{Gerlits} showed using computer calculations that $H_7(\Aut(F_5); \Q)=\Q$ and $H_i(\Aut(F_5); \Q)=0$ unless $i=0,7$. Using \cite{Gerlits, HatcherVogtmann98} and Proposition \ref{prop:Tate K-theory}, we get
\[\widehat{K_7}^m(B\Out(F_{11})) \cong \begin{cases} \Q_7^5 \oplus H^{\ev}(F_{4} \rtimes \Aut(F_{4}); \Q_7)^{\Z/2} , \qquad \text{if}  \;\; m=0\\ \Q_7 \oplus H^{\odd}(F_{4} \rtimes \Aut(F_{4}); \Q_7)^{\Z/2}, \qquad  \text{if} \;m=1  \end{cases}.\]
Here $H^{\ev}$ and $H^{\odd}$ are direct sums of positive even and odd dimensional rational cohomologies respectively. We are not aware of a computation of the groups $H^{*}(F_{4} \rtimes \Aut(F_{4}); \Q)$ or $H^{*}(F_{4} \rtimes \Aut(F_{4}); \Q)^{\Z/2}$. The latter is closely related to $H^{*}(\Out(F_{4}) ; \bigwedge^2 H^1(F_4;\Q))$ and $H^{*}(\Out(F_{4}) ; H^1(F_4;\Q))$  which also seem to be unknown. We conclude with the observation that this is the first appearance of an odd dimensional class in $p$-adic Farell--Tate $K$-theory. However the construction of this class relies on the computer calculations of Gerlits \cite{Gerlits}. 

The $p=11$ case is straightforward. 


\subsection{The Farrell--Tate \texorpdfstring{$K$}{K}-theory of \texorpdfstring{$\Out(F_{12})$}{Out(F12)}} Again the relevant primes here are $p \leq 13$ and we are able to do full computations only in the cases $p=11, 13$. The case $p=13$ is straightforward. For $p=11$ we detect an explicit odd dimensional class in $K_{11}^1(B\Out(F_{12})) \otimes_{\Z} \Q$. Indeed, by Theorem \ref{them: main computation}, we obtain 
\[\widehat{K_{11}}^m(B\Out(F_{12})) \cong \begin{cases} \Q_{11}^4 , \qquad \text{if}  \; m=0\\ \Q_{11}, \qquad   \text{if} \;\;m=1\end{cases}.\]
More explicitly, the $\Q_{11}$-vector space $\widehat{K_{11}}^1(B\Out(F_{12}))$ maps isomorphically to 
\[H^1(C \langle \Phi \rangle ; \Q_{11})\]
using the equivariant Chern Character. This group is computed in Corollary \ref{cor: rational homology of CPhi} and is isomorphic to $\Q_{11}$. Hence one detects a class in $K^1(B\Out(F_{12})) \otimes_{\Z} \Q$ which maps to the generator of \[H^1(C \langle \Phi \rangle; \Q_{11})\] under the equivariant Chern character isomorphism. To our knowledge this is the first non-trivial class detected in the odd $K$-theory of $\Out(F_n)$ which does not require computer calculations. 

\begin{rem} One can construct non-trivial classes in $\widehat{K_{p}}^1(B\Out(F_{n}))$ for lower primes using Remark \ref{rem: transfer}. For instance, when $p=2$ and $n=13$, the transfer of Bartholdi's class in $H_{11}(\Out(F_7) ;\Q)$ produces a non-trvial class in $H_{11}(C\langle \Phi \rangle; \Q)$ which in turn gives a non-trivial class in $\widehat{K_{2}}^1(B\Out(F_{13}))$. However, clearly the construction of this class in Farrell--Tate $K$-theory needs ~Bartholdi's computer calculation.   

\end{rem}

\bibliographystyle{alpha}
\bibliography{biblio}
\end{document}